\newtheorem{theorem}{Theorem}
\theoremstyle{plain}
\newtheorem{corollary}{Corollary}
\newtheorem{definition}{Definition}
\newtheorem{example}{Example}
\newtheorem{lemma}{Lemma}
\newtheorem{proposition}{Proposition}
\newtheorem{remark}{Remark}
\numberwithin{equation}{section}
\begin{document}
\begin{center}
\vspace*{1.3cm}

\textbf{FORMULAS FOR TRANSLATIVE FUNCTIONS}

\bigskip

by

\bigskip

PETRA WEIDNER\footnote{HAWK Hochschule f\"ur angewandte Wissenschaft und Kunst Hildesheim/\-Holzminden/ G\"ottingen University of Applied Sciences and Arts, Faculty of Natural Sciences and Technology,
D-37085 G\"ottingen, Germany, {petra.weidner@hawk.de}.}

\bigskip
\bigskip
Research Report \\ 
Version 2 from October 31, 2018\\
Improved Version of Version 1 from April 9, 2018
\end{center}

\bigskip
\bigskip

\noindent{\small {\textbf{Abstract:}}
In this report, we consider extended real-valued functions on some real vector space. Gerstewitz functionals are used to construct all translative functions. We derive formulas for translative functions which are lower semicontinuous, continuous or  have sublevel sets that are  given by linear inequalities. 
Each extended real-valued function is shown to be the restriction of some translative function to a hyperspace.
Continuity of an arbitrary extended real-valued function is characterized by its epigraph. Moreover, we study the directional closedness of sets as a base for the presented results.
}

\bigskip

\noindent{\small {\textbf{Keywords:}} 
scalarization, separation theorems, vector optimization,  
production theory, mathematical finance}

\bigskip

\noindent{\small {\textbf{Mathematics Subject Classification (2010): }
46A99, 46N10, 90C29, 90B30, 91B99}}

\section{Introduction}

Translative functions are an important tool for scalarization in multicriteria optimization, decision theory, mathematical finance, production theory and operator theory since they can represent orders, preference relations and other binary relations \cite{Wei17b}.

They are the functions with uniform sublevel sets which can be generated by a linear shift of some set along a line. We will characterize the class of sets and vectors needed to construct all translative functionals on the one hand and all of these functionals which are lower semicontinuous, continuous or defined by linear inequalities on the other hand. This investigation is based on statements about the directional closedness of sets. 

The directional closedness of sets is investigated in Section \ref{s-direct}. Section \ref{s-def} contains the 
construction of translative functions and of the subclasses of semicontinuous and of continuous translative functions.
The case that the sublevel sets of a translative function are  given by linear inequalities is studied in Section \ref{sec-poly-fktal}.
In Section \ref{s-arbitfunc}, each extended real-valued function is described as the restriction of some translative functional, and interdependencies between the properties of both functions are proved.
Here, we will also characterize continuity by the epigraph of the function.

Throughout this paper, $Y$ is assumed to be a real vector space.
$\mathbb{R}$ and $\mathbb{N}$ will denote the sets of real numbers and of nonnegative integers, respectively.
We define $\mathbb{N}_{>}:=\mathbb{N}\setminus\{0\}$, $\mathbb{R}_{+}:=\{x\in\mathbb{R}\colon x\geq 0\}$, $\mathbb{R}_{>}:=\{x\in\mathbb{R}\colon x > 0\}$,
$\mathbb{R}_{+}^n:=\{(x_1,\ldots ,x_n)^T\in\mathbb{R}^n\colon x_i\geq 0 \mbox{ for all } i\in\{1,\ldots, n\}\}$ for each $n\in\mathbb{N}_>$.
Given any set $B\subseteq\mathbb{R}$ and some vector $k$ in $Y$, we will use the notation 
$B\; k:= \{b \cdot k\colon b \in B \}$.
A set $C\subseteq Y$ is a cone if $\lambda C\subseteq C$ for all $\lambda\in\mathbb{R}_{+}$. The cone $C$ is called nontrivial if $C\not=\emptyset$, $C\not=\{0\}$ and $C\not= Y$ hold. For a subset $A$ of $Y$, 
$\operatorname*{core}A$ will denote the algebraic interior (core) of $A$ and $0^+A:=\{u\in Y  \colon  A+\mathbb{R}_{+} u\subseteq A \}$ the recession cone of $A$. 
In a topological vector space $Y$, $\operatorname*{int}A$, $\operatorname*{cl}A$ and $\operatorname*{bd}A$ stand for the (topological) interior, the closure and the boundary, respectively, of $A$.
Consider a functional $\varphi :Y\to \overline{\mathbb{R}}$, where $\overline{\mathbb{R}}:=\mathbb{R}\cup \{-\infty ,+\infty\}$. Its effective domain is defined as 
$\operatorname*{dom}\varphi :=\{y\in Y \colon \varphi(y)\in\mathbb{R}\cup \{-\infty\}\}$. Its epigraph 
is $\operatorname*{epi}\varphi :=\{(y,t)\in Y\times \mathbb{R} \colon \varphi(y)\leq t\}$.
The sublevel sets of $\varphi $ are given as $\operatorname*{sublev}_{\varphi}(t):= \{y\in Y  : \varphi (y) \leq t\}$ with $t\in\mathbb{R}$.
$\varphi$ is said to be finite-valued on $F\subseteq Y$ if it attains only real values on $F$. It is said to be finite-valued if it attains only real values on $Y$. 
According to the rules of convex analysis, $\operatorname*{inf}\emptyset =+\infty .$

\section{Directional closedness of sets}\label{s-direct}

In this section, we introduce a directional closedness for sets. Since this property as well as recession cones are related to the construction of functions with uniform sublevel sets, we will investigate the directional closedness of sets and of their recession cones.

Throughout this section, $A$ will be a subset of $Y$ and $k\in Y\setminus\{0\}$.

We define the directional closure according to \cite{Wei17c}.

\begin{definition}\label{def-k-dir-clos}
The $k$-directional closure $\operatorname*{cl}_k(A)$ of $A$ consists of all elements $y\in Y$ with the following property:
For each $\lambda\in\mathbb{R}_>$, there exists some $t\in\mathbb{R}_+$ with $t<\lambda$ such that $y-tk\in A$.\\
$A$ is said to be $k$-directionally closed if $A=\operatorname*{cl}_k(A)$.
\end{definition}

\begin{remark}
Definition \ref{def-k-dir-clos} defines the $k$-directional closure in such a way that it is the closure into direction $k$.
In \cite{QiuHe13}, the $k$-vector closure of $A$ is defined as
$\operatorname*{vcl}_k(A):=\{ y\in Y\mid \exists (t_n)_{n\in\mathbb{N}} \mbox{ with } t_n\to 0 \mbox{ such that } t_n\in\mathbb{R}_+  \mbox{ and } y+t_nk\in A \mbox{ for all } n\in\mathbb{N}\} .$
Hence, $\operatorname*{vcl}_k(A)=\operatorname*{cl}_{-k}(A)$. This $k$-vector closure has been used for the investigation of Gerstewitz functionals under the assumption that $A$ is a convex cone and $k\in A\setminus (-A)$ in \cite{QiuHe13} and without this restrictive assumption in \cite{GutNovRodTan2016}. 
\end{remark}

Obviously, $\operatorname*{cl}_k(Y)=Y$ and $\operatorname*{cl}_k(\emptyset )=\emptyset$.

It is easy to verify that directional closedness can also be characterized in the following way \cite[Lemma 2.2]{Wei17c}.

\begin{lemma}\label{l-k-dir-clsd}
$A$ is $k$-directionally closed if and only if, for each $y\in Y$, we have
$y\in A$ whenever there exists some sequence $(t_n)_{n\in\mathbb{N}}$ of real numbers with $t_n\searrow 0$ and $y-t_nk\in A$.
\end{lemma}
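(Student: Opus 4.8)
The plan is to unwind the definitions and prove the two implications of the biconditional separately, with the real work lying in the construction of a monotone null sequence. I would first record the elementary inclusion $A\subseteq\operatorname*{cl}_k(A)$, which holds because for any $y\in A$ and any $\lambda\in\mathbb{R}_>$ the choice $t=0\in\mathbb{R}_+$ satisfies $t<\lambda$ and $y-tk=y\in A$. Consequently $A$ is $k$-directionally closed precisely when $\operatorname*{cl}_k(A)\subseteq A$, and it is this inclusion that the sequential condition is designed to capture.

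For the direction that assumes $A=\operatorname*{cl}_k(A)$ and proves the sequential implication, I would take any $y$ admitting a sequence $(t_n)$ with $t_n\searrow 0$ and $y-t_nk\in A$. Since $t_n\to 0$, for each $\lambda\in\mathbb{R}_>$ there is an index $N$ with $t_N<\lambda$; as $t_N\in\mathbb{R}_+$ (the terms of a sequence decreasing to $0$ are nonnegative) and $y-t_Nk\in A$, the defining condition of the $k$-directional closure is verified, so $y\in\operatorname*{cl}_k(A)=A$, as claimed.

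The converse direction is where the effort concentrates. Assuming the sequential implication, I would establish $\operatorname*{cl}_k(A)\subseteq A$ by contradiction: suppose $y\in\operatorname*{cl}_k(A)$ but $y\notin A$. Applying the definition with $\lambda=1/n$ for each $n\in\mathbb{N}_>$ furnishes $s_n\in\mathbb{R}_+$ with $s_n<1/n$ and $y-s_nk\in A$; crucially $s_n>0$, since $s_n=0$ would give $y=y-s_nk\in A$, contrary to assumption. Thus $(s_n)$ is a sequence of strictly positive reals tending to $0$. The main obstacle is that $(s_n)$ need not be monotone, so I would extract a strictly decreasing subsequence: choosing indices $n_1<n_2<\cdots$ recursively with $s_{n_{j+1}}<s_{n_j}$ is possible because $s_n\to 0<s_{n_j}$, and the resulting $t_j:=s_{n_j}$ satisfies $t_j\searrow 0$ with $y-t_jk\in A$. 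The sequential implication then forces $y\in A$, contradicting $y\notin A$; hence no such $y$ exists and $\operatorname*{cl}_k(A)\subseteq A$.

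Combining the two directions with the inclusion $A\subseteq\operatorname*{cl}_k(A)$ yields the equivalence. I expect the only genuinely nontrivial point to be the extraction of the monotone subsequence; isolating the case $y\notin A$ beforehand is what makes the terms strictly positive and thereby legitimizes a strictly decreasing selection converging to $0$, so that the resulting sequence is admissible for the implication regardless of how strictly $t_n\searrow 0$ is read.
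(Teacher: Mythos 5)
Your proof is correct. The paper does not prove this lemma itself — it is quoted from an external reference with the remark that it is easy to verify — so there is nothing to compare against; your argument, which reduces the equivalence to the inclusion $\operatorname*{cl}_k(A)\subseteq A$ and isolates the one delicate point (the case split $y\notin A$ to guarantee $s_n>0$, followed by extraction of a strictly decreasing null subsequence so that the hypothesis $t_n\searrow 0$ is met under either reading of monotonicity), is a complete and careful verification.
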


Definition \ref{def-k-dir-clos} implies immediately \cite[Lemma 2.1]{Wei17c}:

\begin{lemma}\label{l-kcl}
\begin{itemize}
\item[]
\item[\rm (a)] $A\subseteq \operatorname*{cl}_k(A)$.
\item[\rm (b)] $\operatorname*{cl}_k(A)\subseteq \operatorname*{cl}_k(B)$ if $A\subseteq B\subseteq Y$.
\item[\rm (c)] $\operatorname*{cl}_k(A+y)=\operatorname*{cl}_k(A)+y$ for each $y\in Y$.
\end{itemize}
\end{lemma}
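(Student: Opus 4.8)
The plan is to prove the three parts of Lemma \ref{l-kcl} directly from Definition \ref{def-k-dir-clos}, since each is an immediate consequence of the membership criterion. For part (a), I would take an arbitrary $y\in A$ and verify that $y$ satisfies the defining property of $\operatorname*{cl}_k(A)$: for every $\lambda\in\mathbb{R}_>$, I simply choose $t=0$, which lies in $\mathbb{R}_+$ and satisfies $t<\lambda$, and observe that $y-0\cdot k=y\in A$. Hence the property holds trivially and $y\in\operatorname*{cl}_k(A)$.

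For part (b), I would assume $A\subseteq B$ and let $y\in\operatorname*{cl}_k(A)$. By definition, for each $\lambda\in\mathbb{R}_>$ there is some $t\in\mathbb{R}_+$ with $t<\lambda$ and $y-tk\in A$. Since $A\subseteq B$, the same $t$ gives $y-tk\in B$, so the defining property for $\operatorname*{cl}_k(B)$ is satisfied and $y\in\operatorname*{cl}_k(B)$. This is pure monotonicity of the quantifier condition under set inclusion.

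For part (c), the translation invariance, I would fix $y\in Y$ and show both inclusions, or argue directly that $z\in\operatorname*{cl}_k(A+y)$ if and only if $z-y\in\operatorname*{cl}_k(A)$. The key observation is the elementary equivalence $z-tk\in A+y \iff (z-y)-tk\in A$. Thus, for a given $\lambda\in\mathbb{R}_>$, a witness $t$ works for $z$ with respect to $A+y$ exactly when the same $t$ works for $z-y$ with respect to $A$. Running the quantifiers over all $\lambda\in\mathbb{R}_>$ on both sides yields $z\in\operatorname*{cl}_k(A+y)\iff z-y\in\operatorname*{cl}_k(A)\iff z\in\operatorname*{cl}_k(A)+y$, which is the claimed equality.

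None of the three steps presents a genuine obstacle; the statement is labeled as an immediate consequence of the definition, and indeed each part reduces to a one-line manipulation of the defining quantifier condition. The only point requiring minor care is the bookkeeping of the translation in part (c), where one must keep the roles of $z$ and $z-y$ straight and note that the choice of witness $t$ is unaffected by the shift by $y$; beyond that, the proof is entirely routine.
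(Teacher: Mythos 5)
Your proof is correct and is exactly the direct verification from Definition \ref{def-k-dir-clos} that the paper intends when it states the lemma follows "immediately" from the definition (no written proof is given there). The witness $t=0$ for part (a), the monotonicity of the quantified condition for part (b), and the equivalence $z-tk\in A+y\iff(z-y)-tk\in A$ for part (c) are all sound.
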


\begin{proposition}
$\operatorname*{cl}_k(\operatorname*{cl}_k A)= \operatorname*{cl}_k(A)$.
\end{proposition}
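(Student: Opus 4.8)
The plan is to prove the two inclusions separately. The inclusion $\operatorname*{cl}_k(A)\subseteq\operatorname*{cl}_k(\operatorname*{cl}_k A)$ is immediate: applying Lemma \ref{l-kcl}(a) with the set $\operatorname*{cl}_k(A)$ in place of $A$ yields at once $\operatorname*{cl}_k(A)\subseteq\operatorname*{cl}_k(\operatorname*{cl}_k A)$. So the real content is the reverse inclusion $\operatorname*{cl}_k(\operatorname*{cl}_k A)\subseteq\operatorname*{cl}_k(A)$, and I would argue it directly from Definition \ref{def-k-dir-clos}.

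To establish the reverse inclusion, I would fix an arbitrary $y\in\operatorname*{cl}_k(\operatorname*{cl}_k A)$ and an arbitrary $\lambda\in\mathbb{R}_>$, and then produce some $t\in\mathbb{R}_+$ with $t<\lambda$ and $y-tk\in A$. First, applying the definition of $\operatorname*{cl}_k(\operatorname*{cl}_k A)$ to $\lambda$ itself gives an $s\in\mathbb{R}_+$ with $s<\lambda$ and $y-sk\in\operatorname*{cl}_k(A)$. Then, since $y-sk\in\operatorname*{cl}_k(A)$, the definition may be applied a second time, now to the threshold $\lambda-s\in\mathbb{R}_>$, yielding an $r\in\mathbb{R}_+$ with $r<\lambda-s$ and $(y-sk)-rk=y-(s+r)k\in A$. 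Setting $t:=s+r$ gives $t\in\mathbb{R}_+$ and $y-tk\in A$.

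The one point that requires care — and the only real obstacle — is ensuring that the combined displacement $t=s+r$ stays strictly below $\lambda$. This is precisely why the second application of the definition must use the threshold $\lambda-s$ rather than $\lambda$: it guarantees $t=s+r<s+(\lambda-s)=\lambda$. Since $\lambda\in\mathbb{R}_>$ was arbitrary, it follows from Definition \ref{def-k-dir-clos} that $y\in\operatorname*{cl}_k(A)$, which completes the reverse inclusion and hence the proof. Intuitively, the argument just says that moving a little in direction $k$ twice is still moving a little, with the additivity of the displacements along the fixed line $\mathbb{R}\,k$ doing all the work.
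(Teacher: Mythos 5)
Your proof is correct and follows essentially the same route as the paper's: both establish the nontrivial inclusion by applying Definition \ref{def-k-dir-clos} twice and adding the two displacements, with the easy inclusion coming from Lemma \ref{l-kcl}. The only cosmetic difference is the bookkeeping — the paper uses the threshold $\lambda/2$ first and then $t_0$ (which forces a separate case $t_0=0$), whereas your choice of $\lambda$ and then $\lambda-s$ avoids that case split.
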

\begin{proof}
For $A=\emptyset$, the assertion is true. Assume now $A\not=\emptyset$.
\begin{itemize}
\item[\rm (i)] $A\subseteq \operatorname*{cl}_k(A)$ implies $ \operatorname*{cl}_k(A)\subseteq \operatorname*{cl}_k(\operatorname*{cl}_k A)$ because of Lemma \ref{l-kcl}.
\item[\rm (ii)] Take any $y\in \operatorname*{cl}_k(\operatorname*{cl}_k A)$ and $\lambda\in\mathbb{R}_>$.
By Definition \ref{def-k-dir-clos},  there exists some $t_0\in\mathbb{R}_+$ with $t_0<\frac{\lambda}{2}$ and $y-t_0k\in\operatorname*{cl}_k(A)$.
If $t_0=0$, then $y\in\operatorname*{cl}_k(A)$. Suppose now $t_0\in\mathbb{R}_>$.
By Definition \ref{def-k-dir-clos}, there exists some $t\in\mathbb{R}_+$ with $t<t_0$ and $(y-t_0k)-tk\in A$, i.e., $y-(t_0+t)k\in A$, where
$0<t+t_0<2t_0<\lambda $. Hence $y\in\operatorname*{cl}_k(A)$.
\end{itemize}
\end{proof}

Lemma \ref{l-k-dir-clsd} yields:

\begin{proposition}\label{kcl_posrecc}
$A$ is $k$-directionally closed if $k\in 0^+A$.
\end{proposition}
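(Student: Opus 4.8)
The plan is to use the sequential characterization of directional closedness provided by Lemma \ref{l-k-dir-clsd}, which reduces the assertion to the following: given any $y\in Y$ for which there exists a sequence $(t_n)_{n\in\mathbb{N}}$ of real numbers with $t_n\searrow 0$ and $y-t_nk\in A$ for all $n$, one has to show $y\in A$. Since Lemma \ref{l-kcl}(a) already gives $A\subseteq\operatorname*{cl}_k(A)$ for free, establishing this implication is exactly what is needed for $k$-directional closedness.

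The key observation is simply that the hypothesis $k\in 0^+A$ unpacks, by the definition of the recession cone, to the inclusion $A+\mathbb{R}_+k\subseteq A$; that is, $a+sk\in A$ whenever $a\in A$ and $s\in\mathbb{R}_+$. First I would fix any single index $n$. Because $t_n\searrow 0$, each $t_n$ belongs to $\mathbb{R}_+$, and $y-t_nk\in A$ holds by assumption. Applying the recession-cone inclusion with $a:=y-t_nk\in A$ and $s:=t_n\geq 0$ then yields $y=(y-t_nk)+t_nk\in A+\mathbb{R}_+k\subseteq A$, which is the desired conclusion.

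I do not expect any genuine obstacle here: the whole argument collapses to a one-line use of the defining property of $0^+A$, and it does not even exploit the convergence $t_n\to 0$, since a single term $t_n\in\mathbb{R}_+$ with $y-t_nk\in A$ already suffices. The only point meriting a word of care is that $t_n$ is permitted to equal $0$, but in that degenerate case $y-t_nk=y\in A$ holds immediately, so no difficulty arises. Equivalently, one could bypass Lemma \ref{l-k-dir-clsd} and argue directly from Definition \ref{def-k-dir-clos}: for $y\in\operatorname*{cl}_k(A)$ and any $\lambda\in\mathbb{R}_>$ one obtains some $t\in\mathbb{R}_+$ with $y-tk\in A$, and adding $tk$ gives $y\in A$, whence $\operatorname*{cl}_k(A)\subseteq A$; combined with Lemma \ref{l-kcl}(a) this delivers $A=\operatorname*{cl}_k(A)$.
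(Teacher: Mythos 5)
Your argument is correct and is exactly the route the paper intends: it derives the proposition from Lemma \ref{l-k-dir-clsd} by writing $y=(y-t_nk)+t_nk\in A+\mathbb{R}_{+}k\subseteq A$, using only the defining inclusion of $0^+A$. The observation that a single term of the sequence suffices is accurate, and your alternative direct argument from Definition \ref{def-k-dir-clos} is equally valid.
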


But in applications, we often have to deal with the case $k\in -0^+A$. Let us prove a statement from \cite[Lemma 2.3]{Wei17c}.

\begin{proposition}\label{kcl_recc}
If $k\in -0^+A$, then 
$\operatorname*{cl}_k(A)=\{y\in Y\colon y-\mathbb{R}_>k\subseteq A\}.$
\end{proposition}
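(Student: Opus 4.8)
The plan is to prove the claimed equality by a double inclusion, abbreviating the right-hand side as $S:=\{y\in Y\colon y-\mathbb{R}_>k\subseteq A\}$. The inclusion $S\subseteq\operatorname*{cl}_k(A)$ should require no hypothesis on the recession cone and follow at once from Definition \ref{def-k-dir-clos}: if $y\in S$, then $y-sk\in A$ for every $s\in\mathbb{R}_>$, so given any prescribed $\lambda\in\mathbb{R}_>$ one may simply take $t:=\tfrac{\lambda}{2}$, which satisfies $t\in\mathbb{R}_+$, $t<\lambda$ and $y-tk\in A$; hence $y\in\operatorname*{cl}_k(A)$.

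The substantive inclusion is $\operatorname*{cl}_k(A)\subseteq S$, and this is where the assumption $k\in -0^+A$ is used. First I would unravel that assumption: $k\in -0^+A$ means $-k\in 0^+A$, i.e.\ $A-\mathbb{R}_+k=A+\mathbb{R}_+(-k)\subseteq A$. Now take $y\in\operatorname*{cl}_k(A)$ and fix an arbitrary $s\in\mathbb{R}_>$; the goal is to show $y-sk\in A$. Applying Definition \ref{def-k-dir-clos} with $\lambda:=s$ yields some $t\in\mathbb{R}_+$ with $t<s$ and $y-tk\in A$. The decisive step is to write $y-sk=(y-tk)-(s-t)k$ and to observe that $s-t\in\mathbb{R}_>$ because $t<s$; the recession property then gives $(y-tk)-(s-t)k\in A-\mathbb{R}_+k\subseteq A$, so $y-sk\in A$. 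Since $s\in\mathbb{R}_>$ was arbitrary, $y-\mathbb{R}_>k\subseteq A$, that is, $y\in S$.

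I expect the only real obstacle to be the bookkeeping in this last step: one must exploit that the $t$ delivered by the directional-closure definition can always be forced strictly below the chosen threshold $s$, so that the residual displacement $s-t$ is strictly positive and the recession cone can be invoked to move from $y-tk$ all the way to $y-sk$. No limiting argument (and in particular no appeal to Lemma \ref{l-k-dir-clsd}) appears necessary; a single application of the definition for each $s$ suffices. The degenerate case $A=\emptyset$ is automatically covered, since then $\operatorname*{cl}_k(A)=\emptyset$ and $S=\emptyset$ (as $y-\mathbb{R}_>k$ is never a subset of $\emptyset$), making both inclusions vacuous.
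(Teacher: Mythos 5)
Your proof is correct and follows essentially the same route as the paper: both directions are handled by double inclusion, with the recession-cone property $A-\mathbb{R}_+k\subseteq A$ used to push from the point $y-tk$ supplied by Definition \ref{def-k-dir-clos} all the way to $y-sk$. The only difference is that the paper splits the forward inclusion into the cases $y\in A$ and $y\notin A$ (to decide whether the $t$ from the definition is zero or strictly positive), whereas your uniform treatment of $t\in\mathbb{R}_+$ with $t<s$ makes that case distinction unnecessary.
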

\begin{proof}
\begin{itemize}
\item[]
\item[\rm (i)] Assume first $y\in\operatorname*{cl}_k(A)$. If $y\in A$, then $y-tk\in A$ holds for all $t\in\mathbb{R}_>$.
Suppose now $y\notin A$. Take any $t\in\mathbb{R}_>$. By Definition \ref{def-k-dir-clos}, there exists some $t_0\in\mathbb{R}_>$ with $t_0<t$ such that $y-t_0k\in A$. Then $y-tk=y-t_0k+(t_0-t)k\in A+0^+A\subseteq A$.
\item[\rm (ii)] Take any $y\in Y$ for which $y-tk\in A$ holds for all $t\in\mathbb{R}_>$. Then $y\in \operatorname*{cl}_k(A)$ by Definition \ref{def-k-dir-clos}.
\end{itemize}
\end{proof}

Recall the following definition.

\begin{definition}
$y\in Y$ is linearly accessible from $A$ if there exists some
$a\in A$ such that $a+\lambda (y-a)\in A$ for all real numbers $\lambda\in (0,1)$.
$A$ is said to be algebraically closed if it contains all elements of $Y$ which are linearly accessible from $A$. 
\end{definition}

In a topological vector space, each closed set is algebraically closed. The next two propositions connect algebraical and directional closedness \cite[Lemmas 2.1 and 5.2]{Wei17b}.

\begin{proposition}\label{l-dir-clsd-suff}
Assume that $A$ is algebraically closed and $k\in -0^+A\setminus\{0\}$.
Then $A$ is $k$-directionally closed.
\end{proposition}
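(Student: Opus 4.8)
The plan is to reduce the claim to the single inclusion $\operatorname*{cl}_k(A) \subseteq A$, since the reverse inclusion $A \subseteq \operatorname*{cl}_k(A)$ is free from Lemma \ref{l-kcl}(a). To obtain that inclusion I would exploit algebraic closedness by exhibiting, for each $y \in \operatorname*{cl}_k(A)$, a point of $A$ from which $y$ is linearly accessible.

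First I would unpack the hypothesis $k \in -0^+A \setminus \{0\}$: it says $-k \in 0^+A$, hence $A - \mathbb{R}_+ k \subseteq A$. This is precisely the assumption of Proposition \ref{kcl_recc}, so I would invoke that proposition to get the explicit description $\operatorname*{cl}_k(A) = \{y \in Y \colon y - \mathbb{R}_> k \subseteq A\}$. Fixing an arbitrary $y \in \operatorname*{cl}_k(A)$ then gives the strong pointwise information $y - tk \in A$ for every $t \in \mathbb{R}_>$.

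Next I would show that $y$ is linearly accessible from $A$. The natural anchor is $a := y - k$, which lies in $A$ (take $t = 1$ above) and satisfies $a \neq y$ because $k \neq 0$. Computing $a + \lambda (y - a) = y - (1-\lambda)k$ for $\lambda \in (0,1)$, the parameter $t := 1 - \lambda$ ranges over $(0,1) \subseteq \mathbb{R}_>$, so every point of the open segment from $a$ to $y$ has the form $y - tk$ with $t > 0$ and hence lies in $A$. This is exactly the definition of $y$ being linearly accessible from $A$, so algebraic closedness of $A$ forces $y \in A$. Since $y \in \operatorname*{cl}_k(A)$ was arbitrary, $\operatorname*{cl}_k(A) \subseteq A$, and combined with Lemma \ref{l-kcl}(a) this yields $A = \operatorname*{cl}_k(A)$.

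The proof is short, and the only real care needed is the bookkeeping of the third paragraph: one must place the anchor $a$ on the far side of $y$ in the $-k$ direction so that the open segment $\{a + \lambda(y-a) \colon \lambda \in (0,1)\}$ reparametrizes exactly as $\{y - tk \colon t \in (0,1)\}$, matching the data supplied by Proposition \ref{kcl_recc}. I expect this matching to be the main (if mild) obstacle: were one to attempt $\operatorname*{cl}_k(A) \subseteq A$ directly from Definition \ref{def-k-dir-clos}, one would only know $y - t_nk \in A$ along some sequence $t_n \searrow 0$, which is too sparse to fill an entire open segment. Routing through the recession-cone characterization in Proposition \ref{kcl_recc} is what upgrades this sequential information to the full ray and thereby makes the linear-accessibility argument available.
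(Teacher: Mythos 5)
Your proof is correct. Note that the paper itself does not supply a proof of this proposition; it is imported from \cite{Wei17b} (Lemmas 2.1 and 5.2), so there is no in-text argument to compare against. Your derivation is a sound, self-contained route using only material available earlier in the paper: Proposition \ref{kcl_recc} legitimately upgrades the sequential data in Definition \ref{def-k-dir-clos} to the full ray $y-\mathbb{R}_>k\subseteq A$, the anchor $a:=y-k$ does lie in $A$, the reparametrization $a+\lambda(y-a)=y-(1-\lambda)k$ is right, and algebraic closedness then forces $y\in A$; the empty case is vacuous on both sides.
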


\begin{proposition}\label{cone-alg-clsd}
Let $A$ be a nontrivial, convex cone with
$k\in-\operatorname*{core}A$. Then
$A$ is $k$-directionally closed if and only if $A$ is algebraically closed.
\end{proposition}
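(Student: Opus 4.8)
The plan is to prove the two implications separately, letting the easy direction rest on Proposition~\ref{l-dir-clsd-suff} and building the harder converse by hand from the core assumption together with Lemma~\ref{l-k-dir-clsd}. The first thing I would record is the elementary fact that for a nonempty convex cone $A$ one has $0^+A=A$: since $A$ is convex and a cone it is closed under addition and under nonnegative scaling, so every $u\in A$ satisfies $A+\mathbb{R}_+u\subseteq A+A\subseteq A$, giving $A\subseteq 0^+A$; and $0\in A$ forces $0^+A\subseteq A$. I would also note that $k\neq 0$, because $k=0$ together with $-k\in\operatorname*{core}A$ would put $0$ in the core of the cone $A$ and hence (using the cone property to rescale) force $A=Y$, contradicting nontriviality.

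With these observations the implication ``$A$ algebraically closed $\Rightarrow$ $A$ is $k$-directionally closed'' is immediate: from $-k\in\operatorname*{core}A\subseteq A=0^+A$ we obtain $k\in-0^+A\setminus\{0\}$, so the hypotheses of Proposition~\ref{l-dir-clsd-suff} are met and the conclusion follows.

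The substantive direction is ``$A$ is $k$-directionally closed $\Rightarrow$ $A$ is algebraically closed.'' Here I would take a point $y$ that is linearly accessible from $A$, so there is $a\in A$ with $z_\lambda:=(1-\lambda)a+\lambda y\in A$ for all $\lambda\in(0,1)$, and try to show $y\in A$. The key step is to manufacture points of $A$ of the form $y-tk$ with $t\searrow 0$ and then invoke Lemma~\ref{l-k-dir-clsd}. Because $-k\in\operatorname*{core}A$, the direction $y-a$ is accessible from $-k$, so there is a fixed $\epsilon>0$ with $q:=-k+\epsilon(y-a)\in A$. Since $A$ is a convex cone, $\alpha q+\beta z_\lambda\in A$ for all $\alpha,\beta\geq 0$; requiring the coefficient of $y$ to equal $1$ and the coefficient of $a$ to vanish leads to $\alpha\epsilon+\beta\lambda=1$ and $\beta(1-\lambda)=\alpha\epsilon$, which solve to $\beta=1$, $\alpha=(1-\lambda)/\epsilon$. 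This yields $\alpha q+\beta z_\lambda=y-\tfrac{1-\lambda}{\epsilon}k\in A$, so $y-tk\in A$ for every $t\in(0,1/\epsilon)$.

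Letting $\lambda\nearrow 1$ produces a sequence $t_n=(1-\lambda_n)/\epsilon\searrow 0$ with $y-t_nk\in A$, whence $k$-directional closedness via Lemma~\ref{l-k-dir-clsd} forces $y\in A$; thus $A$ contains every linearly accessible point and is algebraically closed. I expect the main obstacle to be exactly the algebraic bookkeeping of the preceding paragraph: seeing that one should combine a single segment point $z_\lambda$ with the single core-direction point $q$, and recognizing that the freedom to rescale (available because $A$ is a cone) is precisely what lets the $y$-coefficient be normalized to $1$ while the $a$-term is annihilated and the $k$-coefficient is driven to $0$. Once that combination is identified, the remainder is routine.
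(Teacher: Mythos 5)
Your proof is correct. The paper itself gives no argument for this proposition (it is quoted from reference [Wei17b], Lemma 5.2), so there is nothing internal to compare against; your write-up in effect supplies the missing proof. Both halves check out: the reduction $0^+A=A$ for a nonempty convex cone and the observation $k\neq 0$ legitimize the appeal to Proposition~\ref{l-dir-clsd-suff} for the easy direction, and in the converse the conic combination $\alpha q+\beta z_\lambda$ with $\beta=1$, $\alpha=(1-\lambda)/\epsilon$ does land exactly on $y-\tfrac{1-\lambda}{\epsilon}k\in A$, after which Lemma~\ref{l-k-dir-clsd} applies with $t_n=(1-\lambda_n)/\epsilon\searrow 0$. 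The one point worth making explicit is that the core hypothesis is used only to produce the single point $q=-k+\epsilon(y-a)\in A$ for the particular direction $y-a$; this is exactly where the argument breaks down if one weakens $k\in-\operatorname*{core}A$ to $k\in(-A)\setminus A$, consistent with the counterexample the paper points to in Example~\ref{ex-lex}.
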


In Proposition \ref{cone-alg-clsd}, the assumption $k\in-\operatorname*{core}A$ cannot be replaced by the condition $k\in (-A)\setminus A$. In Example \ref{ex-lex}, $A$ is a nontrivial, convex cone which is $k$-directionally closed for some $k\in (-A)\setminus A$ though $A$ is not algebraically closed.

\begin{corollary}\label{cor-rec-clsd}
Assume $k\in-\operatorname*{core}0^+A$. Then
$0^+A$ is $k$-directionally closed if and only if $0^+A$ is algebraically closed.
\end{corollary}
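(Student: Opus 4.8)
The plan is to apply Proposition \ref{cone-alg-clsd} to the recession cone $0^+A$ in the role of $A$, with the same vector $k$. The only points that must be verified before this substitution is legitimate are that $0^+A$ is a convex cone and that it is nontrivial under the stated hypothesis; once these are in place, the equivalence follows at once from Proposition \ref{cone-alg-clsd}.

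First I would establish that $0^+A$ is always a convex cone, for an arbitrary subset $A$ of $Y$. That it is a cone follows directly from its definition, since $\lambda u\in 0^+A$ whenever $u\in 0^+A$ and $\lambda\in\mathbb{R}_+$ (using $\mathbb{R}_+(\lambda u)=\mathbb{R}_+ u$ for $\lambda>0$, while $0\in 0^+A$ holds trivially). For convexity it then suffices to show closure under addition: if $u_1,u_2\in 0^+A$, then for every $t\in\mathbb{R}_+$ one has $A+t(u_1+u_2)=(A+tu_1)+tu_2\subseteq A+tu_2\subseteq A$, so $u_1+u_2\in 0^+A$. Combined with the cone property, this yields convexity. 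Convexity of $A$ itself is not needed for this step.

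Next I would settle nontriviality. Since $k\in Y\setminus\{0\}$ throughout this section, $Y\neq\{0\}$. The hypothesis $k\in-\operatorname*{core}0^+A$ gives $-k\in\operatorname*{core}0^+A$, so $\operatorname*{core}0^+A\neq\emptyset$; in particular $0^+A\neq\emptyset$, and also $0^+A\neq\{0\}$, because the core of a singleton in a space $Y\neq\{0\}$ is empty. It remains to dispose of the case $0^+A=Y$: here $0^+A$ is trivially algebraically closed and, by the identity $\operatorname*{cl}_k(Y)=Y$ recorded after Definition \ref{def-k-dir-clos}, also $k$-directionally closed, so both sides of the asserted equivalence hold and nothing further is required.

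In the remaining case, $0^+A$ is a nontrivial convex cone with $k\in-\operatorname*{core}0^+A$, so Proposition \ref{cone-alg-clsd} applies verbatim and delivers the claimed equivalence. I expect the only genuinely substantive ingredient to be the convexity of $0^+A$ for a possibly non-convex $A$; the nontriviality bookkeeping and the degenerate case $0^+A=Y$ are routine.
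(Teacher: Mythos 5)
Your proof is correct and follows exactly the route the paper intends: the corollary is stated as an immediate application of Proposition \ref{cone-alg-clsd} with $0^+A$ in place of $A$, and you supply the routine verifications (that $0^+A$ is a convex cone for arbitrary $A$, that $k\in-\operatorname*{core}0^+A$ forces $0^+A\notin\{\emptyset,\{0\}\}$, and that the degenerate case $0^+A=Y$ is trivial on both sides) that the paper leaves implicit. No further changes are needed.
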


\begin{lemma}
If $A$ is algebraically closed, then $0^+A$ is algebraically closed.
\end{lemma}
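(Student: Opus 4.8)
The plan is to unfold the definition of algebraic closedness for $0^+A$ and reduce everything to the algebraic closedness of $A$ itself. So I would take an arbitrary $u\in Y$ that is linearly accessible from $0^+A$ and aim to show $u\in 0^+A$. By the definition of linear accessibility there is some $v\in 0^+A$ such that $w_\lambda:=v+\lambda(u-v)\in 0^+A$ for all $\lambda\in(0,1)$. Recalling that $0^+A=\{w\in Y\colon A+\mathbb{R}_+w\subseteq A\}$, proving $u\in 0^+A$ amounts to verifying that $a+su\in A$ for every $a\in A$ and every $s\in\mathbb{R}_+$.

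To do this I would fix such an $a$ and $s$ and set $p:=a+su$ (the target point) together with the auxiliary point $q:=a+sv$. Since $v\in 0^+A$ and $s\geq 0$, we have $q\in A$. The decisive computation is then that, for every $\mu\in(0,1)$,
\[
q+\mu(p-q)=a+s\bigl(v+\mu(u-v)\bigr)=a+sw_\mu .
\]
Because $w_\mu\in 0^+A$, $a\in A$ and $s\geq 0$, the point $a+sw_\mu$ lies in $A$. Hence the entire open segment joining $q$ to $p$ is contained in $A$, which is precisely the statement that $p$ is linearly accessible from $A$, with witness $q\in A$.

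Since $A$ is assumed algebraically closed, linear accessibility of $p$ forces $p=a+su\in A$. As $a\in A$ and $s\in\mathbb{R}_+$ were arbitrary, this gives $A+\mathbb{R}_+u\subseteq A$, i.e. $u\in 0^+A$, and therefore $0^+A$ is algebraically closed. The degenerate cases are immediate: for $s=0$ one has $p=a\in A$ directly, and if $A=\emptyset$ then $0^+A=Y$, which is trivially algebraically closed. I do not expect a genuine obstacle here; the only idea required is choosing the base point $q=a+sv$, after which the segment identity $q+\mu(p-q)=a+sw_\mu$ transfers linear accessibility from $0^+A$ back to $A$ automatically, and the rest is routine bookkeeping with the recession-cone definition.
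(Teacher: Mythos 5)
Your proof is correct and follows essentially the same route as the paper's: both arguments take a point linearly accessible from $0^+A$, translate the witnessing segment by an arbitrary $a\in A$ and scale by $s\in\mathbb{R}_+$ (the paper's $\lambda a^0+t(\lambda y-\lambda a^0)$ is exactly your $sw_\mu$), observe the translated segment lies in $A+0^+A\subseteq A$, and invoke the algebraic closedness of $A$. The only cosmetic difference is that you handle the degenerate cases $s=0$ and $A=\emptyset$ explicitly, which the paper leaves implicit.
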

\begin{proof}
Take any $a^0\in 0^+A$ and $y\in Y$ for which $a^0+t(y-a^0)\in 0^+A$ holds for all $t\in (0,1)$.
Consider arbitrary elements $a\in A$ and $\lambda\in\mathbb{R}_>$.
Since $0^+A$ is a cone we get $\lambda a^0+t(\lambda y-\lambda a^0)\in 0^+A$ for all $t\in (0,1)$.
Thus $(a+\lambda a^0)+t((a+\lambda y)-(a+\lambda a^0))=a+\lambda a^0+t(\lambda y-\lambda a^0)\in A+0^+A\subseteq A$ for all $t\in (0,1)$.
Since $a+\lambda a^0\in A$ and $A$ is algebraically closed, we get $a+\lambda y\in A$.
Thus $y\in 0^+A$. Hence, $0^+A$ is algebraically closed.
\end{proof}

\begin{proposition}\label{p-varphi_rec}
If $A$ is $k$-directionally closed, then $0^+A$ is $k$-directionally closed.
\end{proposition}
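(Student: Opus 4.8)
The plan is to reduce everything to the sequential characterization of directional closedness provided by Lemma \ref{l-k-dir-clsd}. By that lemma, it suffices to show the following: for every $u\in Y$, if there is a sequence $(t_n)_{n\in\mathbb{N}}$ of real numbers with $t_n\searrow 0$ and $u-t_nk\in 0^+A$ for all $n$, then $u\in 0^+A$. So I would fix such a $u$ and such a sequence $(t_n)$, and aim to verify the defining condition $A+\mathbb{R}_+u\subseteq A$ for the recession cone, i.e.\ that $a+\lambda u\in A$ for every $a\in A$ and every $\lambda\in\mathbb{R}_+$.

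The key step is to convert the hypothesis on $0^+A$ into a statement about $A$ and then re-apply the directional closedness of $A$ itself. Fix an arbitrary $a\in A$ and $\lambda\in\mathbb{R}_+$; the case $\lambda=0$ is trivial since $a+0\cdot u=a\in A$, so assume $\lambda\in\mathbb{R}_>$. From $u-t_nk\in 0^+A$ and the definition $0^+A=\{v\in Y\colon A+\mathbb{R}_+v\subseteq A\}$, I get $a+\lambda(u-t_nk)\in A$ for every $n$, which rewrites as $(a+\lambda u)-(\lambda t_n)k\in A$. Setting $s_n:=\lambda t_n$, the sequence $(s_n)$ still satisfies $s_n\searrow 0$ because $\lambda>0$ and $t_n\searrow 0$.

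Now the point $y:=a+\lambda u$ admits a sequence $s_n\searrow 0$ with $y-s_nk\in A$, so the $k$-directional closedness of $A$ together with Lemma \ref{l-k-dir-clsd} yields $y=a+\lambda u\in A$. Since $a\in A$ and $\lambda\in\mathbb{R}_+$ were arbitrary, this gives $A+\mathbb{R}_+u\subseteq A$, hence $u\in 0^+A$, which is exactly what is needed to conclude that $0^+A$ is $k$-directionally closed.

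I do not expect a serious obstacle here: the argument is essentially the observation that membership in $0^+A$ is tested against all $a\in A$ and all scalings $\lambda$, and that multiplying a null sequence by a fixed positive $\lambda$ keeps it a strictly decreasing null sequence, so the directional-closedness property of $A$ transfers directly to $0^+A$. The only care required is to treat the degenerate case $\lambda=0$ separately and to note that the passage from the sequence $(t_n)$ to $(\lambda t_n)$ preserves the condition $\searrow 0$ needed to invoke Lemma \ref{l-k-dir-clsd}.
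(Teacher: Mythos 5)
Your proof is correct and complete: reducing to the sequential test of Lemma \ref{l-k-dir-clsd}, scaling the null sequence by $\lambda>0$, and applying the $k$-directional closedness of $A$ to the point $a+\lambda u$ is exactly the right argument, and the degenerate case $\lambda=0$ is properly handled. The paper itself only cites \cite[Prop.~2.4(c)]{Wei17b} for this proposition rather than proving it, but your argument mirrors the scheme the paper uses for the analogous lemma on algebraic closedness of $0^+A$ (test against arbitrary $a\in A$ and $\lambda\in\mathbb{R}_>$, then invoke the closedness of $A$), so there is nothing to object to.
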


This proposition is due to \cite[Prop. 2.4(c)]{Wei17b}. It yields, together with Corollary \ref{cor-rec-clsd}:

\begin{corollary}
Assume that $A$ is $k$-directionally closed and $k\in-\operatorname*{core}0^+A$. Then
$0^+A$ is algebraically closed.
\end{corollary}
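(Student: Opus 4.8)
The plan is to chain the two results that immediately precede the statement, since the corollary is essentially their composition. First I would invoke Proposition \ref{p-varphi_rec}: because $A$ is assumed to be $k$-directionally closed, its recession cone $0^+A$ is also $k$-directionally closed. This transfers the directional-closedness property from $A$ to $0^+A$, which is exactly the object whose algebraic closedness we want to establish.

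Next I would apply Corollary \ref{cor-rec-clsd}. Its standing hypothesis $k\in-\operatorname*{core}0^+A$ is precisely one of the assumptions of the present corollary, so it is available. That corollary asserts the equivalence of $k$-directional closedness and algebraic closedness for $0^+A$ under this hypothesis. Having already secured the $k$-directional closedness of $0^+A$ in the first step, the forward implication of this equivalence yields that $0^+A$ is algebraically closed, which is the claim.

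The argument is a direct concatenation, so there is no genuine obstacle; the only thing to verify is that the hypotheses of both auxiliary results are met, and both hold verbatim by assumption ($A$ being $k$-directionally closed for Proposition \ref{p-varphi_rec}, and $k\in-\operatorname*{core}0^+A$ for Corollary \ref{cor-rec-clsd}). One might observe that $k\in-\operatorname*{core}0^+A$ in particular forces $k\neq 0$, keeping us within the standing convention $k\in Y\setminus\{0\}$ of the section, but no further work is needed beyond the two invocations.
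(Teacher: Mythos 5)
Your proposal is correct and matches the paper's own argument exactly: the paper derives this corollary by combining Proposition \ref{p-varphi_rec} (transferring $k$-directional closedness from $A$ to $0^+A$) with the equivalence in Corollary \ref{cor-rec-clsd} under the hypothesis $k\in-\operatorname*{core}0^+A$. No gaps; the verification of hypotheses is as routine as you indicate.
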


Let us now investigate the relationship between directional and topological closedness in a topological vector space.

\begin{proposition}\label{p-kcl-cl}
Assume that $Y$ is a topological vector space.
\begin{itemize}
\item[{\rm (a)}] $\operatorname*{cl}A$ is $k$-directionally closed, i.e., $\operatorname*{cl}_k(\operatorname*{cl}A)=\operatorname*{cl}A$.
\item[{\rm (b)}] $\operatorname*{cl}_k A\subseteq \operatorname*{cl}A$.
\item[{\rm (c)}] If $A$ is closed, then it is $k$-directionally closed, i.e., $\operatorname*{cl}_k A=A$.
\item[{\rm (d)}] $\operatorname*{cl}_k A$ is closed if and only if $\operatorname*{cl}_k A=\operatorname*{cl}A$.
\item[{\rm (e)}] $\operatorname*{cl}A-\mathbb{R}_>k\subseteq A$ holds if and only if $k\in -0^+A$ and $\operatorname*{cl}_k A=\operatorname*{cl}A$.
\end{itemize}
\end{proposition}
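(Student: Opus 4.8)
The plan is to treat part (a) as the one genuinely topological ingredient and then derive (b)--(d) purely formally from it together with the elementary properties of $\operatorname{cl}_k$ in Lemma \ref{l-kcl}; part (e) will be handled separately, using the recession-cone description of $\operatorname{cl}_k$ from Proposition \ref{kcl_recc}. For (a), the inclusion $\operatorname{cl}A\subseteq\operatorname{cl}_k(\operatorname{cl}A)$ is immediate from Lemma \ref{l-kcl}(a), so the content is the reverse inclusion $\operatorname{cl}_k(\operatorname{cl}A)\subseteq\operatorname{cl}A$. I would fix $y\in\operatorname{cl}_k(\operatorname{cl}A)$ and an arbitrary neighborhood $U$ of $y$. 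Since scalar multiplication and translation are continuous, the map $t\mapsto y-tk$ is continuous and sends $0$ to $y$, so there is some $\delta\in\mathbb{R}_>$ with $y-tk\in U$ for all $t\in[0,\delta)$. Applying Definition \ref{def-k-dir-clos} with $\lambda=\delta$ produces $t\in\mathbb{R}_+$, $t<\delta$, with $y-tk\in\operatorname{cl}A$; this point then lies in $U\cap\operatorname{cl}A$. As $U$ was arbitrary, $y\in\operatorname{cl}(\operatorname{cl}A)=\operatorname{cl}A$.

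Parts (b)--(d) follow mechanically. For (b), monotonicity (Lemma \ref{l-kcl}(b)) applied to $A\subseteq\operatorname{cl}A$ gives $\operatorname{cl}_kA\subseteq\operatorname{cl}_k(\operatorname{cl}A)$, which equals $\operatorname{cl}A$ by (a). Part (c) is the special case $\operatorname{cl}A=A$ of (b) combined with extensivity $A\subseteq\operatorname{cl}_kA$ from Lemma \ref{l-kcl}(a). For (d), the implication $\operatorname{cl}_kA=\operatorname{cl}A\Rightarrow\operatorname{cl}_kA\text{ closed}$ is trivial since $\operatorname{cl}A$ is closed; conversely, if $\operatorname{cl}_kA$ is closed, then $A\subseteq\operatorname{cl}_kA$ yields $\operatorname{cl}A\subseteq\operatorname{cl}_kA$, and (b) supplies the opposite inclusion.

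The remaining work is (e). For the forward direction, assuming $\operatorname{cl}A-\mathbb{R}_>k\subseteq A$, I first observe $A-\mathbb{R}_+k\subseteq A$ (the case $t=0$ being trivial and $t>0$ coming from $A\subseteq\operatorname{cl}A$), which is precisely $k\in-0^+A$; then for any $y\in\operatorname{cl}A$ the hypothesis gives $y-tk\in A$ for every $t\in\mathbb{R}_>$, so $y\in\operatorname{cl}_kA$ by Definition \ref{def-k-dir-clos}, whence $\operatorname{cl}A\subseteq\operatorname{cl}_kA$ and equality follows with (b). For the converse, $k\in-0^+A$ lets me invoke Proposition \ref{kcl_recc} to write $\operatorname{cl}_kA=\{y\in Y\colon y-\mathbb{R}_>k\subseteq A\}$; substituting the hypothesis $\operatorname{cl}_kA=\operatorname{cl}A$ yields $y-\mathbb{R}_>k\subseteq A$ for every $y\in\operatorname{cl}A$, that is, $\operatorname{cl}A-\mathbb{R}_>k\subseteq A$.

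The main obstacle is the topological step in (a); everything else is bookkeeping built on Lemma \ref{l-kcl} and Proposition \ref{kcl_recc}. The one point requiring care there is to avoid an illegitimate appeal to sequential closedness, since $Y$ need not be first countable, and instead to argue directly with neighborhoods, invoking only the continuity of $t\mapsto y-tk$ at $t=0$.
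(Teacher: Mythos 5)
Your proposal is correct and follows essentially the same route as the paper: part (a) via the continuity of $t\mapsto y-tk$ at $t=0$, parts (b)--(d) as formal consequences using Lemma \ref{l-kcl}, and part (e) via Proposition \ref{kcl_recc}. The only cosmetic difference is that in (a) the paper invokes the sequential characterization of Lemma \ref{l-k-dir-clsd} --- which is harmless even without first countability, since the sequences there are real parameters $t_n\searrow 0$ rather than nets in $Y$ --- whereas you argue directly from Definition \ref{def-k-dir-clos} with neighborhoods; both amount to the same observation.
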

\begin{proof}
\begin{itemize}
\item[]
\item[\rm (a)] Take any $y\in Y$ for which there exists some sequence $(t_n)_{n\in\mathbb{N}}$ of real numbers with $t_n\searrow 0$ and $y-t_nk\in \operatorname*{cl}A$. Since each neighborhood of $y$ contains some $y-t_nk$, we get $y\in \operatorname*{cl}(\operatorname*{cl}A)=\operatorname*{cl}A$. Hence, $\operatorname*{cl}A$ is $k$-directionally closed.
\item[{\rm (b)}] $A\subseteq \operatorname*{cl}A$ yields $\operatorname*{cl}_k A\subseteq \operatorname*{cl}_k(\operatorname*{cl}A)=\operatorname*{cl}A$ by (a).
\item[\rm (c)] results from (a).
\item[\rm (d)] Assume first that $\operatorname*{cl}_k A$ is closed. By Lemma \ref{l-kcl}, $A\subseteq \operatorname*{cl_k}A$. This yields $\operatorname*{cl}A\subseteq \operatorname*{cl}(\operatorname*{cl_k}A)=\operatorname*{cl_k}A$. This implies $\operatorname*{cl}A=\operatorname*{cl_k}A$ by (b). The reverse direction of the assertion is obvious.
\item[\rm (e)] $\operatorname*{cl}A-\mathbb{R}_>k\subseteq A$ implies $k\in -0^+A$ and $\operatorname*{cl}A\subseteq \operatorname*{cl}_k A$ by Proposition \ref{kcl_recc}; hence $\operatorname*{cl}_k A=\operatorname*{cl}A$ because of (b).\\
If $k\in -0^+A$ and $\operatorname*{cl}_k A=\operatorname*{cl}A$, then Proposition \ref{kcl_recc} yields $\operatorname*{cl}_k A-\mathbb{R}_>k\subseteq A$ and thus $\operatorname*{cl}A-\mathbb{R}_>k\subseteq A$.
\end{itemize}
\end{proof}

\begin{example}\label{ex-lex}
The lexicographic order in $\mathbb{R}^2$ is represented by the convex, pointed cone $A=\{ (y_1,y_2)^T\in\mathbb{R}^2 : y_1>0\}\cup \{ (y_1,y_2)^T\in\mathbb{R}^2 : y_1=0, y_2\geq 0\}$. 
$A$ is not algebraically closed and thus not closed.
Obviously, $0^+A=A$ and $-0^+A\setminus\{ 0\}=-A\setminus\{ 0\}=Y\setminus A=-A\setminus A$.
\begin{itemize}
\item[\rm (a)] For each $k\in A\setminus\{ 0\}$, $A$ is $k$-directionally closed by Proposition \ref{kcl_posrecc}.
\item[\rm (b)] For each $k\in -\operatorname*{core}A=\{(y_1,y_2)^T\in\mathbb{R}^2 : y_1<0\}$, we have $\operatorname*{cl}A-\mathbb{R}_>k\subseteq A$ and $\operatorname*{cl}_{k}A=\operatorname*{cl}A=\{ (y_1,y_2)^T\in\mathbb{R}^2 : y_1\geq 0\}$.
\item[\rm (c)] For each $k\in -A\setminus ((-\operatorname*{core}A)\cup\{ 0\})=\{ (y_1,y_2)^T\in\mathbb{R}^2 : y_1=0, y_2 < 0\}$, $A$ is $k$-directionally closed and $\operatorname*{cl}A-\mathbb{R}_>k\not\subseteq A$.
\end{itemize}
\end{example}

\section{Translative functions}\label{s-def}

If all sublevel sets of an extended real-valued function $\varphi : Y\to\overline{\mathbb{R}}$ have the same shape, then there exists some set $A\subseteq Y$ and some function $\zeta\colon \mathbb{R}\to Y$ with
\begin{equation}\label{eq-uniform}
\operatorname*{sublev}\nolimits_{\varphi }(t)=A+\zeta (t)\mbox{ for all }t\in\mathbb{R}.
\end{equation}
Clearly, $A$ and $\zeta$ have to fulfill the condition
$$ A+\zeta (t_1)\subseteq A+\zeta (t_2)\mbox{ for all }t_1,t_2\in\mathbb{R}\mbox{ with }t_1<t_2.$$
Condition (\ref{eq-uniform}) coincides with
\begin{equation*}
\operatorname*{epi}\varphi =\{ (y,t)\in Y\times\mathbb{R}\colon y\in A+\zeta (t)\}.
\end{equation*}
It implies
\begin{equation}\label{eq-uniconstruct}
\varphi (y)=\operatorname*{inf}\{ t\in\mathbb{R}\colon y\in A+\zeta (t)\}\mbox{ for all } y\in Y
\end{equation}
since 
$\varphi (y)=\operatorname*{inf}\{ t\in\mathbb{R}\colon \varphi (y)\leq t\}$.

If there exists some fixed vector $k\in Y\setminus\{ 0\}$ such that $\zeta (t)=tk$ for each $t\in\mathbb{R}$, then (\ref{eq-uniform}) is equivalent to
\begin{equation}\label{eq-lev0}
\operatorname*{sublev}\nolimits_{\varphi }(t)=\operatorname*{sublev}\nolimits_{\varphi }(0)+tk\mbox{ for all }t\in\mathbb{R},
\end{equation}
and (\ref{eq-uniconstruct}) coincides with the definition of a Gerstewitz functional.\\ 
A Gerstewitz functional
$\varphi _{A,k} : Y\to\overline{\mathbb{R}}$ is given by
\begin{equation*}
\varphi _{A,k}(y):=\operatorname*{inf}\{ t\in\mathbb{R}\colon y\in A+tk\}\mbox{ for all } y\in Y
\end{equation*}
with $A\subseteq Y$ and $k\in Y\setminus\{ 0\}$.\\
This formula was introduced by Gerstewitz (later Gerth, now Tammer) for convex sets $A$ under more restrictive assumptions in the context of vector optimization \cite{ger85}.
Basic properties of this function on topological vector spaces $Y$ have been proved in \cite{GerWei90} and \cite{Wei90}, later followed by \cite{GopRiaTamZal:03}, \cite{TamZal10}, \cite{DT} and \cite{Wei17a}. 
For detailed bibliographical notes, see \cite{Wei17a}. There it is also pointed out that researchers from different fields of mathematics and economic theory have applied Gerstewitz functionals. Properties of Gerstewitz functionals on linear spaces were studied in \cite{Wei17b} and \cite{Wei17c}.

\begin{example}
Assume that $Y$ is the vector space of functions $f\colon X\to\mathbb{R}$, where $X$ is some nonempty set.
Choose $k\in Y$ as the function with the constant value 1 at each $x\in X$ and
$A:=\{f\in Y\colon f(x)\leq 0\mbox{ for all }x\in X\}$. Then we get, for each $f\in Y$,
\begin{eqnarray*}
\varphi_{A,k}(f) & = & \operatorname*{inf}\{t\in\mathbb{R}\colon f\in A+tk\}\\
& = &  \operatorname*{inf}\{t\in\mathbb{R}\colon f-tk\in A\}\\
& = &  \operatorname*{inf}\{t\in\mathbb{R}\colon f(x)-t\leq 0\mbox{ for all }x\in X\}\\
& = &  \operatorname*{inf}\{t\in\mathbb{R}\colon f(x)\leq t\mbox{ for all }x\in X\}\\
& = &  \operatorname*{sup}_{x\in X}f(x),\mbox{ where }\\
\operatorname*{dom}\varphi_{A,k} & = & \{f\in Y\colon \operatorname*{sup}_{x\in X}f(x)\in \mathbb{R}\}.
\end{eqnarray*}
\end{example}

Each Gerstewitz functional fulfills equation (\ref{eq-lev0}).

\begin{proposition} \label{prop-vor-theo-and}
Consider a function $\varphi:Y\rightarrow \overline{{\mathbb{R}}}$ and $k\in Y\setminus\{ 0\}$.\\
The following conditions are equivalent to each other:
\begin{eqnarray} 
\operatorname*{sublev}\nolimits_{\varphi}(t) & = & \operatorname*{sublev}\nolimits_{\varphi}(0)+tk \quad \mbox{ for all } t \in\mathbb{R},\label{K4-sub}\\
\operatorname*{epi}\varphi & = & \{(y,t)\in Y\times \mathbb{R}  : y\in \operatorname*{sublev}\nolimits_{\varphi}(0)+tk\}, \\
\varphi (y+t k) & = & \varphi (y)+t \quad \mbox{ for all } y\in Y, \;t \in\mathbb{R},\label{trans_sub}\\
\varphi (y) & = & \inf \{t\in
{\mathbb{R}}  : y\in \operatorname*{sublev}\nolimits_{\varphi}(0)+tk \} \;\;\mbox{ for all } y\in Y.\label{gerst_sub}
\end{eqnarray} 
\end{proposition}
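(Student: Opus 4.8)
The plan is to verify that the four conditions are equivalent by establishing a single cyclic chain of implications, namely \eqref{K4-sub} $\Rightarrow$ the epigraph identity $\Rightarrow$ \eqref{gerst_sub} $\Rightarrow$ \eqref{trans_sub} $\Rightarrow$ \eqref{K4-sub}. The only structural ingredient I would use throughout is the tautology already recorded just before the statement, namely $\varphi(y)=\inf\{t\in\mathbb{R}\colon \varphi(y)\le t\}$, which lets me pass freely between the value $\varphi(y)$, the membership $y\in\operatorname*{sublev}_{\varphi}(t)$, and the incidence $(y,t)\in\operatorname*{epi}\varphi$. No convexity, no topology, and no further hypothesis on $A$ or $\varphi$ enter; the whole statement is a purely order-theoretic bookkeeping of the special choice $\zeta(t)=tk$.

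For the first implication I would fix $(y,t)\in Y\times\mathbb{R}$ and note that $(y,t)\in\operatorname*{epi}\varphi$ means $\varphi(y)\le t$, i.e.\ $y\in\operatorname*{sublev}_{\varphi}(t)$; inserting \eqref{K4-sub} rewrites this as $y\in\operatorname*{sublev}_{\varphi}(0)+tk$, which is exactly the epigraph identity. For the second implication I would read the infimum off the epigraph: since $(y,t)\in\operatorname*{epi}\varphi$ is now equivalent to $y\in\operatorname*{sublev}_{\varphi}(0)+tk$, the tautology $\varphi(y)=\inf\{t\in\mathbb{R}\colon(y,t)\in\operatorname*{epi}\varphi\}$ becomes \eqref{gerst_sub} verbatim. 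The closing implication is symmetric: assuming \eqref{trans_sub}, I would show, for fixed $t$, that $y\in\operatorname*{sublev}_{\varphi}(t)$ iff $\varphi(y)\le t$ iff $\varphi(y-tk)\le 0$ (using $\varphi(y-tk)=\varphi(y)-t$, the $y\mapsto y-tk$ instance of \eqref{trans_sub}) iff $y-tk\in\operatorname*{sublev}_{\varphi}(0)$ iff $y\in\operatorname*{sublev}_{\varphi}(0)+tk$, giving the set equality \eqref{K4-sub}.

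The heart of the argument, and the step I expect to need the most care, is \eqref{gerst_sub} $\Rightarrow$ \eqref{trans_sub}. Writing $S(y):=\{t\in\mathbb{R}\colon y\in\operatorname*{sublev}_{\varphi}(0)+tk\}$, a change of variables gives $S(y+sk)=S(y)+s$, because $y+sk\in\operatorname*{sublev}_{\varphi}(0)+tk$ is the same as $y\in\operatorname*{sublev}_{\varphi}(0)+(t-s)k$. Then \eqref{trans_sub} follows from translation invariance of the infimum, $\inf\bigl(S(y)+s\bigr)=\inf S(y)+s$. The delicate point is that $\varphi$ is $\overline{\mathbb{R}}$-valued, so $S(y)$ may be empty, forcing $\varphi(y)=\inf\emptyset=+\infty$, or unbounded below, forcing $-\infty$; I would therefore check that the shift identity for the infimum together with the arithmetic $(\pm\infty)+s=\pm\infty$ survive these degenerate cases, which they do since $\emptyset+s=\emptyset$ and $t\mapsto t+s$ is an order isomorphism of $\mathbb{R}$ carrying $S(y)$ onto $S(y+sk)$.
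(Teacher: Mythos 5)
Your argument is correct and complete. The paper does not prove this proposition itself but defers to [Wei17c, Proposition 3.1]; the only reasoning it offers is the discussion at the start of Section \ref{s-def}, which establishes exactly your first two implications (the sublevel-set identity rewrites the epigraph, and the tautology $\varphi(y)=\inf\{t\in\mathbb{R}\colon\varphi(y)\le t\}$ then yields the infimum formula). Your contribution beyond that sketch is closing the cycle via \eqref{gerst_sub} $\Rightarrow$ \eqref{trans_sub} $\Rightarrow$ \eqref{K4-sub}, and both steps check out: the shift identity $S(y+sk)=S(y)+s$ is a correct change of variables, the infimum translates even when $S(y)$ is empty or unbounded below, and in the last step the equivalence $\varphi(y-tk)+t\le t\iff\varphi(y-tk)\le 0$ survives the values $\pm\infty$ as you implicitly use. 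This is consistent with, and fills in, the approach the paper gestures at.
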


$\varphi$ is said to be $k$-translative if property (\ref{trans_sub}) is satisfied. It is said to be translative if it is $k$-translative for some $k\in Y\setminus\{ 0\}$.

\begin{remark}
Proposition \ref{prop-vor-theo-and} was proved in \cite[Proposition 3.1]{Wei17c}.
The $k$-translativity of $\varphi _{A,k}$ had already been mentioned in \cite{GopTamZal:00}.
Hamel \cite[Proposition 1]{Ham12} studied relationships between Gerstewitz functionals and $k$-translative functions, where he also introduced a notion of directional closedness. But his definition of $k$-directional closedness is different from the one we use. 
He defined $A$ to be $k$-directionally closed if, for each $y\in Y$, one has
$y\in A$ whenever there exists some sequence $(t_n)_{n\in\mathbb{N}}$ of real numbers with $t_n\rightarrow 0$ and $y\in A-t_nk$.
Hamel  proved the equivalence of (\ref{K4-sub})-(\ref{trans_sub}) \cite[Theorem 1]{Ham06} and that (\ref{trans_sub}) implies (\ref{gerst_sub}) \cite[Proposition 1]{Ham12}.
\end{remark}

Note that the sublevel sets of a Gerstewitz functional $\varphi_{A,k}$ do not always have the shape of $A$. But they can be described using $A$ and $k$ in the following way \cite[Theorem 3.1]{Wei17c}.

\begin{theorem}\label{t-sublevequ}
Assume $A\subseteq Y$ and $k\in Y\setminus\{ 0\}$.
Consider $\tilde{A}:=\operatorname*{sublev}\nolimits_{\varphi_{A,k}}(0)$.
\begin{itemize}
\item[\rm (a)] $\tilde{A}$ is the unique set for which  
\begin{equation*}
\operatorname*{sublev}\nolimits_{\varphi_{A,k}}(t)=\tilde{A}+tk \mbox{ for all } t \in\mathbb{R}
\end{equation*}
holds.
\item[\rm (b)] $\tilde{A}$ is the unique set with the following properties:
\begin{itemize}
\item[\rm (i)] $\quad\tilde{A}$ is $k$-directionally closed,
\item[\rm (ii)] $\quad k\in -0^+\tilde{A}\setminus\{0\}$ and
\item[\rm (iii)] $\quad\varphi _{A,k}$ coincides with $\varphi _{\tilde{A},k}$ on $Y$.
\end{itemize}
\item[\rm (c)] $\tilde{A}$ is the $k$-closure of $A-\mathbb{R}_{+}k$. It
consists of those points $y\in Y$ for which $y-tk\in A-\mathbb{R}_{+}k$ holds for each $t\in\mathbb{R}_>$.
\end{itemize}
\end{theorem}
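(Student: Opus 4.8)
My plan is to make the $k$-translativity of the Gerstewitz functional the backbone of the whole argument. First I would verify directly from the definition that $\varphi_{A,k}(y+sk)=\varphi_{A,k}(y)+s$ for all $y\in Y$ and $s\in\mathbb{R}$, via the substitution $t\mapsto t-s$ inside the infimum $\inf\{t\in\mathbb{R}\colon y-tk\in A\}$. Once this is in hand, part (a) is immediate: $\varphi_{A,k}$ is $k$-translative, so Proposition \ref{prop-vor-theo-and} gives (\ref{K4-sub}), which is exactly $\operatorname*{sublev}\nolimits_{\varphi_{A,k}}(t)=\tilde A+tk$ for all $t\in\mathbb{R}$; uniqueness follows by evaluating any competing representation $B+tk$ at $t=0$, forcing $B=\tilde A$.

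Next I would prove (c), which I regard as the technical core. I would first record that $(A-\mathbb{R}_+k)+\mathbb{R}_+(-k)=A-\mathbb{R}_+k$, so $k\in-0^+(A-\mathbb{R}_+k)$ and Proposition \ref{kcl_recc} applies, identifying $\operatorname*{cl}_k(A-\mathbb{R}_+k)$ with $\{y\colon y-\mathbb{R}_>k\subseteq A-\mathbb{R}_+k\}$; this already reconciles the two descriptions asserted in (c). It then remains to show $\tilde A$ equals this set. Writing $S_y:=\{t\in\mathbb{R}\colon y-tk\in A\}$ so that $\varphi_{A,k}(y)=\inf S_y$, I would unwind the membership $y-tk\in A-\mathbb{R}_+k$ into ``there is $r\le t$ with $r\in S_y$'', and then show that the condition ``for every $t\in\mathbb{R}_>$ some $r\in S_y$ satisfies $r\le t$'' is equivalent to $\inf S_y\le 0$, that is, to $y\in\tilde A$.

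Then I would turn to (b). To see that $\tilde A$ has the three properties I would argue: (i) $k$-directional closedness follows from (c) together with the idempotency $\operatorname*{cl}_k(\operatorname*{cl}_k(\cdot))=\operatorname*{cl}_k(\cdot)$ proved earlier; (ii) $k\in-0^+\tilde A\setminus\{0\}$ follows from $k$-translativity, since $y\in\tilde A$ and $s\ge 0$ give $\varphi_{A,k}(y-sk)=\varphi_{A,k}(y)-s\le 0$, whence $\tilde A-\mathbb{R}_+k\subseteq\tilde A$; (iii) $\varphi_{A,k}=\varphi_{\tilde A,k}$ follows from the computation $\varphi_{\tilde A,k}(y)=\inf\{t\colon\varphi_{A,k}(y-tk)\le 0\}=\inf\{t\colon t\ge\varphi_{A,k}(y)\}=\varphi_{A,k}(y)$, again via translativity and checking the $\pm\infty$ cases separately. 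For uniqueness, suppose $B$ satisfies (i)--(iii). The decisive observation is that (ii) gives $B-\mathbb{R}_+k=B$ and (i) gives $\operatorname*{cl}_kB=B$, so applying (c) with $B$ in place of $A$ yields $\operatorname*{sublev}\nolimits_{\varphi_{B,k}}(0)=\operatorname*{cl}_k(B-\mathbb{R}_+k)=B$; the same identity holds for $\tilde A$, and since (iii) forces $\varphi_{B,k}=\varphi_{A,k}=\varphi_{\tilde A,k}$, comparing zero-sublevel sets gives $B=\tilde A$.

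The main obstacle I anticipate is the infimum bookkeeping in (c): one must handle strict versus non-strict inequalities carefully, since the set $S_y$ need not attain its infimum, may be a half-line, or may be empty with $\inf S_y=+\infty$, so that the passage between $\inf S_y\le 0$ and the ``for all $t\in\mathbb{R}_>$'' condition is justified in every case. The remainder of the argument is essentially structural once $k$-translativity and Proposition \ref{kcl_recc} are available.
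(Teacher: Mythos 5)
Your proposal is correct, and all the steps check out: the substitution argument for $k$-translativity, the reduction of part (a) to Proposition \ref{prop-vor-theo-and} plus evaluation at $t=0$, the identification $y-tk\in A-\mathbb{R}_{+}k\Leftrightarrow\exists\, r\le t:\ r\in S_y$ and the careful equivalence with $\inf S_y\le 0$ (including the empty and unattained cases), and the uniqueness argument in (b) via applying (c) to a competitor $B$ with $B-\mathbb{R}_{+}k=B$ and $\operatorname*{cl}_kB=B$. Note, however, that the paper itself gives no proof of this theorem: it is imported verbatim from \cite[Theorem 3.1]{Wei17c}, so there is no in-paper argument to compare against. What you have written is a self-contained derivation built entirely from the tools the paper does set up in Section \ref{s-direct} and Proposition \ref{prop-vor-theo-and} (the idempotency of $\operatorname*{cl}_k$, Proposition \ref{kcl_recc}, and translativity), which is the natural route and almost certainly close in spirit to the cited source; the one organizational choice worth highlighting is that you prove (c) first and then obtain both existence and uniqueness in (b) as formal consequences of (c), which makes the uniqueness argument particularly clean.
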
 

Proposition \ref{prop-vor-theo-and} and Theorem \ref{t-sublevequ} completely characterize the class of functions with uniform sublevel sets which are translative. This is summarized in the following theorem \cite[Theorem 3.2]{Wei17c}. 

\begin{theorem}\label{t-alle}
For each $k\in Y\setminus\{0\}$, the class of Gerstewitz functionals
$\{\varphi_{A,k} : A\subseteq Y\}$ coincides with the class of $k$-translative functions on $Y$ and with the class of functions $\varphi:Y\to \overline{{\mathbb{R}}}$ having uniform sublevel sets which fulfill the condition
\begin{equation*}
\operatorname*{sublev}\nolimits_{\varphi }(t)=\operatorname*{sublev}\nolimits_{\varphi }(0)+tk\mbox{ for all }t\in\mathbb{R}.
\end{equation*}
\end{theorem}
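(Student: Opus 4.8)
The plan is to prove the equality of the three function classes by a short chain of equivalences together with two inclusions, drawing entirely on Proposition~\ref{prop-vor-theo-and} and Theorem~\ref{t-sublevequ}(a); no genuinely new construction is required, so the work lies in correctly identifying which earlier statement delivers each piece.

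First I would record that the class of $k$-translative functions and the class of functions with uniform sublevel sets satisfying $\operatorname{sublev}_\varphi(t)=\operatorname{sublev}_\varphi(0)+tk$ already coincide: $k$-translativity is by definition condition~(\ref{trans_sub}), the uniform-sublevel-set condition is~(\ref{K4-sub}), and Proposition~\ref{prop-vor-theo-and} lists these two as equivalent for every $\varphi$ and the fixed $k$. Thus the second and third classes are literally the same set of functions, and it remains only to match them with the first.

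Next I would show that every Gerstewitz functional belongs to the third class. Fix $A\subseteq Y$ and put $\tilde A:=\operatorname{sublev}_{\varphi_{A,k}}(0)$. Theorem~\ref{t-sublevequ}(a) gives $\operatorname{sublev}_{\varphi_{A,k}}(t)=\tilde A+tk=\operatorname{sublev}_{\varphi_{A,k}}(0)+tk$ for all $t\in\mathbb{R}$, which is exactly condition~(\ref{K4-sub}) for $\varphi_{A,k}$. Hence $\{\varphi_{A,k}:A\subseteq Y\}$ is contained in the third class. For the reverse inclusion I would take any $\varphi$ in the third class, i.e.\ any $\varphi$ satisfying~(\ref{K4-sub}), and apply the equivalence in Proposition~\ref{prop-vor-theo-and} again to obtain~(\ref{gerst_sub}), namely $\varphi(y)=\inf\{t\in\mathbb{R}:y\in\operatorname{sublev}_\varphi(0)+tk\}$ for all $y\in Y$. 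Setting $B:=\operatorname{sublev}_\varphi(0)\subseteq Y$, the right-hand side is precisely the defining formula of the Gerstewitz functional $\varphi_{B,k}$, so $\varphi=\varphi_{B,k}$ lies in the first class. Combining the three steps yields the equality of all three classes.

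The main obstacle---such as it is---is conceptual rather than technical: one must recognize that~(\ref{gerst_sub}) is nothing but the Gerstewitz formula with the generating set chosen to be the zero-sublevel set, and that Theorem~\ref{t-sublevequ}(a) supplies the matching direction. A minor point deserving explicit care is that everything is stated for one fixed $k$, so I would verify that the witnessing set $B=\operatorname{sublev}_\varphi(0)$ is an admissible subset of $Y$ (it always is) and that no change of direction $k$ is smuggled in when passing between the three representations.
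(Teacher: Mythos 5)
Your proposal is correct and matches the paper's intended argument: the paper states this theorem as a direct summary of Proposition \ref{prop-vor-theo-and} and Theorem \ref{t-sublevequ} (citing its proof to an earlier work), and your chain --- (\ref{trans_sub})$\Leftrightarrow$(\ref{K4-sub}) identifies the second and third classes, Theorem \ref{t-sublevequ}(a) puts every $\varphi_{A,k}$ into the third class, and (\ref{gerst_sub}) with $B=\operatorname{sublev}_\varphi(0)$ gives the reverse inclusion --- is exactly that derivation.
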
 

Hence, each translative functional can be constructed as a Gerstewitz functional. Moreover, the class of sets $A$ and vectors $k$ which have to be used in order to construct all Gerstewitz functionals can be restricted. Theorem \ref{t-sublevequ}(b) implies:

\begin{corollary}\label{cor-all}
The class of translative functionals 
$$\{\varphi_{A,k} \colon A\subseteq Y, k\in Y\setminus\{ 0\} \}$$ 
coincides with 
$$\{ \varphi_{A,k}\colon A\subseteq Y \mbox{ is } k\mbox{-directionally closed}, k\in -0^+A\setminus\{0\} \}.$$
\end{corollary}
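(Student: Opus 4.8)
The plan is to prove the two inclusions between the classes separately, and the claim is that one direction is essentially free while the other is precisely the content of Theorem \ref{t-sublevequ}(b). Writing the full class as $\mathcal{F}:=\{\varphi_{A,k}\colon A\subseteq Y, k\in Y\setminus\{0\}\}$ and the restricted class as $\mathcal{G}:=\{\varphi_{A,k}\colon A\subseteq Y \text{ is } k\text{-directionally closed}, k\in -0^+A\setminus\{0\}\}$, the goal is to show $\mathcal{F}=\mathcal{G}$.

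First I would observe that the inclusion $\mathcal{G}\subseteq\mathcal{F}$ is immediate. The defining conditions of $\mathcal{G}$ only restrict which pairs $(A,k)$ are admissible: they require $A$ to be $k$-directionally closed and $k\in -0^+A\setminus\{0\}$, which in particular forces $k\neq 0$. Hence every functional in $\mathcal{G}$ is already presented in the form $\varphi_{A,k}$ with $A\subseteq Y$ and $k\in Y\setminus\{0\}$, so it lies in $\mathcal{F}$.

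For the reverse inclusion $\mathcal{F}\subseteq\mathcal{G}$ I would take an arbitrary functional $\varphi_{A,k}\in\mathcal{F}$, so that $A\subseteq Y$ and $k\in Y\setminus\{0\}$ are arbitrary. Set $\tilde{A}:=\operatorname*{sublev}\nolimits_{\varphi_{A,k}}(0)$ and apply Theorem \ref{t-sublevequ}(b). That theorem guarantees exactly that $\tilde{A}$ is $k$-directionally closed (property (i)), that $k\in -0^+\tilde{A}\setminus\{0\}$ (property (ii)), and that $\varphi_{A,k}$ coincides with $\varphi_{\tilde{A},k}$ on all of $Y$ (property (iii)). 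Since the direction $k$ is left unchanged, the pair $(\tilde{A},k)$ meets precisely the conditions defining $\mathcal{G}$, whence $\varphi_{A,k}=\varphi_{\tilde{A},k}\in\mathcal{G}$. As $\varphi_{A,k}$ was arbitrary, $\mathcal{F}\subseteq\mathcal{G}$, and combining the two inclusions yields the asserted equality.

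I do not expect any genuine obstacle, because the substantive work — producing a $k$-directionally closed representative $\tilde{A}$ with $k\in -0^+\tilde{A}\setminus\{0\}$ that generates the same Gerstewitz functional — has already been done in Theorem \ref{t-sublevequ}(b). The only point deserving a moment's care is the bookkeeping observation that passing from $A$ to $\tilde{A}$ keeps the same line direction $k$, so that membership in $\mathcal{G}$ is verified with respect to that same $k$; everything else is a direct quotation of the three properties furnished by the theorem.
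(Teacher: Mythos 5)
Your proposal is correct and matches the paper's argument: the paper derives this corollary directly from Theorem \ref{t-sublevequ}(b), replacing $A$ by $\tilde{A}:=\operatorname*{sublev}\nolimits_{\varphi_{A,k}}(0)$ while keeping the same $k$, exactly as you do. The one-line observation that the restricted class is trivially contained in the full class is the only addition, and it is harmless.
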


Note that $A-\mathbb{R}_{+}k=A$ if and only if $k\in -0^+A$. Hence, we get from Theorem \ref{t-sublevequ}:

\begin{corollary}\label{c-sublev}
Assume $A\subseteq Y$ and $k\in -0^+A\setminus\{ 0\}$. Then
$$\operatorname*{sublev}\nolimits_{\varphi_{A,k}}(t)=\operatorname*{cl_k}A+tk \mbox{ for all } t \in\mathbb{R}.$$
\end{corollary}

Let us now investigate semicontinuity and continuity of translative functions.

Semicontinuity can be completely characterized by the sublevel sets of the function on the one hand and by the epigraph of the function on the other hand \cite{Mor67}.

\begin{lemma}\label{l-semicon}
Let $Y$ be a topological space, $\varphi: Y\to \overline{\mathbb{R}}$. 
Then the following statements are equivalent:
\begin{itemize}
\item[\rm (a)] $\varphi$ is lower semicontinuous.
\item[\rm (b)] The sublevel sets $\operatorname*{sublev}_{\varphi }(t)$ 
are closed for all $t\in \mathbb{R}$. 
\item[\rm (c)] $ \operatorname*{epi}\varphi$ is closed in $Y\times \mathbb{R}$. 
\end{itemize}
\end{lemma}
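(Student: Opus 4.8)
The plan is to establish the two equivalences (a)\,$\Leftrightarrow$\,(b) and (b)\,$\Leftrightarrow$\,(c) separately, working throughout with the neighborhood characterization of lower semicontinuity that is valid on an arbitrary topological space: $\varphi$ is lower semicontinuous at $y_0\in Y$ if for each $\lambda\in\mathbb{R}$ with $\lambda<\varphi(y_0)$ there is a neighborhood $U$ of $y_0$ with $\varphi(y)>\lambda$ for all $y\in U$, and $\varphi$ is lower semicontinuous if this holds at every point. Since only real thresholds $\lambda$ and $t$ enter, the cases $\varphi(y_0)\in\{-\infty,+\infty\}$ need no separate treatment.

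For (a)\,$\Leftrightarrow$\,(b) I would rewrite everything in terms of the strict superlevel sets $\{y\in Y\colon\varphi(y)>t\}$, which are exactly the complements of the sublevel sets $\operatorname*{sublev}_{\varphi}(t)$. If $\varphi$ is lower semicontinuous, then for fixed $t\in\mathbb{R}$ every point $y_0$ with $\varphi(y_0)>t$ has, by taking $\lambda=t$, a neighborhood on which $\varphi>t$; hence $\{y\colon\varphi(y)>t\}$ is open and $\operatorname*{sublev}_{\varphi}(t)$ is closed. Conversely, if every sublevel set is closed and $\lambda<\varphi(y_0)$, then $y_0$ lies outside the closed set $\operatorname*{sublev}_{\varphi}(\lambda)$, so a neighborhood $U$ of $y_0$ avoids it, giving $\varphi>\lambda$ on $U$; thus $\varphi$ is lower semicontinuous. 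This direction is essentially the definition unwound, so no real obstacle arises.

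For (b)\,$\Rightarrow$\,(c) I would show that the complement of $\operatorname*{epi}\varphi$ is open. Given $(y_0,t_0)$ with $\varphi(y_0)>t_0$, I would pick a real $t_1$ with $t_0<t_1<\varphi(y_0)$; since $\operatorname*{sublev}_{\varphi}(t_1)$ is closed and misses $y_0$, there is a neighborhood $U$ of $y_0$ with $\varphi>t_1$ on $U$, and then $U\times(-\infty,t_1)$ is a neighborhood of $(y_0,t_0)$ in $Y\times\mathbb{R}$ disjoint from $\operatorname*{epi}\varphi$. For (c)\,$\Rightarrow$\,(b) I would use that $\operatorname*{sublev}_{\varphi}(t)\times\{t\}=\operatorname*{epi}\varphi\cap(Y\times\{t\})$ is closed as the intersection of two closed sets, and that the slice $Y\times\{t\}$ is homeomorphic to $Y$; transporting back yields the closedness of $\operatorname*{sublev}_{\varphi}(t)$.

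The only point needing care, and the closest thing to an obstacle, is the product-topology argument in the epigraph step: one must choose the intermediate level $t_1$ strictly between $t_0$ and $\varphi(y_0)$ so that a single open box $U\times(-\infty,t_1)$ separates $(y_0,t_0)$ from the \emph{whole} epigraph at once, rather than attempting a level-by-level separation. Everything else is a routine unwinding of the definitions, and the extended-real values cause no difficulty precisely because all thresholds are taken in $\mathbb{R}$.
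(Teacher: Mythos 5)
Your proof is correct. The paper gives no proof of this lemma at all---it is quoted as a classical fact with a reference to Moreau \cite{Mor67}---so there is no internal argument to compare against; your chain (a)$\,\Leftrightarrow\,$(b) via strict superlevel sets, (b)$\,\Rightarrow\,$(c) via the open box $U\times(-\infty,t_1)$ with an intermediate level $t_1$, and (c)$\,\Rightarrow\,$(b) via the closed slice $Y\times\{t\}$ is the standard complete derivation, and your remark that the values $\pm\infty$ need no separate treatment because all thresholds are real is accurate.
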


Theorem \ref{t-sublevequ} yields because of Lemma \ref{l-semicon}:
\begin{proposition}
Assume that $Y$ is a topological vector space, $A\subseteq Y$ and $k\in Y\setminus\{ 0\}$.
Then $\varphi _{A,k}$ is lower semicontinuous if and only if $\operatorname*{cl_k}(A-\mathbb{R}_{+}k)$ is a closed set.
\end{proposition}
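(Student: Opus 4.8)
The plan is to combine the semicontinuity characterization in Lemma \ref{l-semicon} with the explicit description of the sublevel sets of $\varphi_{A,k}$ from Theorem \ref{t-sublevequ}. By Lemma \ref{l-semicon}, applied with the topological vector space $Y$ viewed as a topological space, $\varphi_{A,k}$ is lower semicontinuous if and only if every sublevel set $\operatorname*{sublev}_{\varphi_{A,k}}(t)$, $t\in\mathbb{R}$, is closed. So the whole task reduces to deciding when this entire family of sets is closed.

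Next I would invoke Theorem \ref{t-sublevequ} to make the sublevel sets explicit. Writing $\tilde{A}:=\operatorname*{sublev}_{\varphi_{A,k}}(0)$, part (c) of that theorem identifies $\tilde{A}$ with the $k$-directional closure $\operatorname*{cl}_k(A-\mathbb{R}_{+}k)$, while part (a) gives
\begin{equation*}
\operatorname*{sublev}\nolimits_{\varphi_{A,k}}(t)=\tilde{A}+tk=\operatorname*{cl}\nolimits_k(A-\mathbb{R}_{+}k)+tk\quad\text{for all }t\in\mathbb{R}.
\end{equation*}
Thus every sublevel set is a translate of the single set $\operatorname*{cl}_k(A-\mathbb{R}_{+}k)$ along the line $\mathbb{R}k$.

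The decisive observation is that in a topological vector space translation by a fixed vector $tk$ is a homeomorphism of $Y$ onto itself; hence $\tilde{A}+tk$ is closed if and only if $\tilde{A}$ is closed, and this holds uniformly for every $t\in\mathbb{R}$. Consequently the family $\{\operatorname*{sublev}_{\varphi_{A,k}}(t):t\in\mathbb{R}\}$ consists entirely of closed sets precisely when $\tilde{A}=\operatorname*{cl}_k(A-\mathbb{R}_{+}k)$ itself is closed (the case $t=0$ already forces this direction, and the homeomorphism property supplies the converse for all other $t$). Chaining these equivalences together yields the claim: $\varphi_{A,k}$ is lower semicontinuous if and only if $\operatorname*{cl}_k(A-\mathbb{R}_{+}k)$ is closed.

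I do not anticipate a serious obstacle here, since each individual step is supplied by a previously established result. The only point requiring a moment of care is that lower semicontinuity demands \emph{all} sublevel sets be closed, not merely the zero-level one; this is precisely what the translation-invariance built into Theorem \ref{t-sublevequ}(a), together with the homeomorphism property of translations in a topological vector space, resolves in one stroke, collapsing the infinite family of closedness conditions to the single condition on $\operatorname*{cl}_k(A-\mathbb{R}_{+}k)$.
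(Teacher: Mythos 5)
Your proof is correct and follows exactly the route the paper intends: the paper gives no written proof beyond the remark that the proposition follows from Theorem \ref{t-sublevequ} and Lemma \ref{l-semicon}, and your argument (sublevel sets are translates of $\operatorname*{cl}_k(A-\mathbb{R}_{+}k)$ along $\mathbb{R}k$, translations are homeomorphisms, lower semicontinuity means all sublevel sets are closed) is precisely the chain of reasoning being invoked.
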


Properties of lower semicontinuous Gerstewitz functionals have been studied in \cite{Wei17a}. Theorem 2.9 from \cite{Wei17a} contains the statement of the next lemma for proper subsets $A$ of $Y$. If $A$ is not a proper subset of $Y$, the statement is obvious.

\begin{lemma}\label{l-cont}
Assume that $Y$ is a topological vector space, $A\subseteq Y$ and $k\in Y\setminus\{0\}$ with $\operatorname*{cl}A-\mathbb{R}_{>}k\subseteq A$. 
$\varphi  _{A,k} $ is continuous if and only if $\operatorname*{cl}A-\mathbb{R}_{>}k\subseteq \operatorname*{int}A$ holds.
\end{lemma}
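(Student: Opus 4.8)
The plan is to decompose continuity into lower and upper semicontinuity and to show that, under the standing hypothesis $\operatorname*{cl}A-\mathbb{R}_>k\subseteq A$, lower semicontinuity is automatic, so that continuity is equivalent to upper semicontinuity, which in turn will be equivalent to the stated inclusion.

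First I would extract what the hypothesis gives. By Proposition \ref{p-kcl-cl}(e), the inclusion $\operatorname*{cl}A-\mathbb{R}_>k\subseteq A$ is equivalent to $k\in-0^+A$ together with $\operatorname*{cl}_kA=\operatorname*{cl}A$. Since $k\in-0^+A\setminus\{0\}$, Corollary \ref{c-sublev} yields $\operatorname*{sublev}_{\varphi_{A,k}}(t)=\operatorname*{cl}_kA+tk=\operatorname*{cl}A+tk$ for all $t\in\mathbb{R}$. These sets are closed, so $\varphi_{A,k}$ is lower semicontinuous by Lemma \ref{l-semicon}. Hence $\varphi_{A,k}$ is continuous if and only if it is upper semicontinuous, i.e. if and only if each strict sublevel set $\{y\in Y:\varphi_{A,k}(y)<t\}$ is open.

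I would then collapse all strict sublevel sets to one. Because $\varphi_{A,k}$ is $k$-translative, (\ref{trans_sub}) gives $\{y:\varphi_{A,k}(y)<t\}=S+tk$ with $S:=\{y:\varphi_{A,k}(y)<0\}$, so upper semicontinuity holds if and only if $S$ is open. Writing $S=\bigcup_{t<0}\operatorname*{sublev}_{\varphi_{A,k}}(t)$ and substituting the formula above gives $S=\operatorname*{cl}A-\mathbb{R}_>k$. The lemma thus reduces to the set-theoretic equivalence: $S=\operatorname*{cl}A-\mathbb{R}_>k$ is open if and only if $S\subseteq\operatorname*{int}A$.

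The forward implication is immediate, since $S\subseteq A$ by hypothesis and an open set contained in $A$ lies in $\operatorname*{int}A$. The converse is the main obstacle, and it is here that the direction $k$ enters essentially. Assuming $S\subseteq\operatorname*{int}A$, I would fix $y\in S$ and write $y=a-s_0k$ with $a\in\operatorname*{cl}A$ and $s_0>0$. For every $r\in(0,s_0)$ we then have $y+rk=a-(s_0-r)k\in\operatorname*{cl}A-\mathbb{R}_>k=S\subseteq\operatorname*{int}A$; fixing such an $r$ and an open neighbourhood $W\subseteq A$ of $y+rk$, the translate $V:=W-rk$ is an open neighbourhood of $y$ satisfying $V\subseteq A-rk\subseteq\operatorname*{cl}A-\mathbb{R}_>k=S$. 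Thus every point of $S$ is interior and $S$ is open. The crux is exactly this translation trick: a point of $S$ can be pushed a little in the $+k$ direction into $\operatorname*{int}A$, and an interior neighbourhood there, pulled back along $-k$, stays inside $S$.
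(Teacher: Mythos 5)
Your proof is correct, and it is worth noting that the paper itself does not prove this lemma: it only cites Theorem~2.9 of the reference [Wei17a] (adding that the non-proper case $A\in\{\emptyset,Y\}$ is obvious), so your argument is a genuine self-contained replacement rather than a variant of an internal proof. Your route is entirely consistent with the machinery the paper does develop: Proposition~\ref{p-kcl-cl}(e) and Corollary~\ref{c-sublev} give $\operatorname*{sublev}_{\varphi_{A,k}}(t)=\operatorname*{cl}A+tk$, whence lower semicontinuity is automatic by Lemma~\ref{l-semicon} (this is exactly how the paper proves Theorem~\ref{t-semicon-alle}(1), so there is no circularity in invoking it here); the reduction of upper semicontinuity to openness of the single set $S=\{y\colon\varphi_{A,k}(y)<0\}=\bigcup_{t<0}(\operatorname*{cl}A+tk)=\operatorname*{cl}A-\mathbb{R}_{>}k$ via $k$-translativity is clean; and the two implications of the set-theoretic equivalence ($S$ open $\Leftrightarrow$ $S\subseteq\operatorname*{int}A$) are both sound --- the forward one because an open subset of $A$ lies in $\operatorname*{int}A$, the converse by your translation trick $V:=W-rk\subseteq A-rk\subseteq\operatorname*{cl}A-\mathbb{R}_{>}k=S$ with $0<r<s_0$. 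What your approach buys is transparency: the reader sees exactly where the hypothesis $\operatorname*{cl}A-\mathbb{R}_{>}k\subseteq A$ is used (to make lsc free and to run the forward implication) and where the direction $k$ enters (to push a point of $S$ slightly into $\operatorname*{int}A$ and pull an interior neighbourhood back), at the cost of a few lines that the paper delegates to an external result. The degenerate cases $A=\emptyset$ and $A=Y$ are also covered by your argument without special pleading, since then $S$ is $\emptyset$ or $Y$.
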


\begin{theorem}\label{t-semicon-alle}
Assume that $Y$ is a topological vector space, $A\subseteq Y$ and $k\in -0^+A\setminus\{ 0\}$.
\begin{itemize}
\item[\rm (1)]
The following conditions are equivalent to each other:
\begin{itemize}
\item[\rm (a)] $\varphi _{A,k}$ is lower semicontinuous.
\item[\rm (b)] $\operatorname*{cl_k}A=\operatorname*{cl}A$.
\item[\rm (c)] $\operatorname*{sublev}\nolimits_{\varphi_{A,k}}(t)=\operatorname*{cl}A+tk$
holds for all $t \in\mathbb{R}$.
\item[\rm (d)]  $\operatorname*{cl}A-\mathbb{R}_{>}k\subseteq A$.
\end{itemize}
\item[\rm (2)] $\varphi  _{A,k} $ is continuous if and only if $\operatorname*{cl}A-\mathbb{R}_{>}k\subseteq \operatorname*{int}A$.
\end{itemize}
\end{theorem}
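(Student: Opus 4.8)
The plan is to derive everything from four earlier results: the sublevel-set description in Corollary \ref{c-sublev}, the semicontinuity characterization in Lemma \ref{l-semicon}, the closedness dichotomy in Proposition \ref{p-kcl-cl}, and the continuity criterion in Lemma \ref{l-cont}. The standing hypothesis $k\in -0^+A\setminus\{0\}$ is exactly what makes Corollary \ref{c-sublev} available, since (as noted before that corollary) it is equivalent to $A-\mathbb{R}_+k=A$, so that $\operatorname*{sublev}\nolimits_{\varphi_{A,k}}(t)=\operatorname*{cl}_k A+tk$ for all $t\in\mathbb{R}$.

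For part (1), I would prove (a) $\Leftrightarrow$ (b) $\Leftrightarrow$ (c) together with (b) $\Leftrightarrow$ (d). First, by Lemma \ref{l-semicon}, $\varphi_{A,k}$ is lower semicontinuous if and only if every sublevel set is closed. By Corollary \ref{c-sublev} the sublevel set at level $t$ equals $\operatorname*{cl}_k A+tk$, and since translation by $tk$ is a homeomorphism of the topological vector space, this set is closed precisely when $\operatorname*{cl}_k A$ is closed. Proposition \ref{p-kcl-cl}(d) then says $\operatorname*{cl}_k A$ is closed if and only if $\operatorname*{cl}_k A=\operatorname*{cl}A$, giving (a) $\Leftrightarrow$ (b). For (b) $\Leftrightarrow$ (c) I would again use Corollary \ref{c-sublev}: substituting $\operatorname*{cl}_k A=\operatorname*{cl}A$ yields (c), while conversely evaluating (c) at $t=0$ forces $\operatorname*{cl}_k A=\operatorname*{sublev}\nolimits_{\varphi_{A,k}}(0)=\operatorname*{cl}A$. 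Finally, (b) $\Leftrightarrow$ (d) is immediate from Proposition \ref{p-kcl-cl}(e), whose two stated conditions reduce, under the standing assumption $k\in -0^+A$, to exactly $\operatorname*{cl}_k A=\operatorname*{cl}A$.

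For part (2), the idea is to reduce to Lemma \ref{l-cont}, but that lemma carries the hypothesis $\operatorname*{cl}A-\mathbb{R}_>k\subseteq A$, which is not given outright; supplying it is the step that needs care. If $\operatorname*{cl}A-\mathbb{R}_>k\subseteq \operatorname*{int}A$, then a fortiori $\operatorname*{cl}A-\mathbb{R}_>k\subseteq A$ (as $\operatorname*{int}A\subseteq A$), so Lemma \ref{l-cont} applies and yields continuity of $\varphi_{A,k}$. For the converse, if $\varphi_{A,k}$ is continuous then it is lower semicontinuous, so condition (a) holds; part (1) then gives condition (d), that is, $\operatorname*{cl}A-\mathbb{R}_>k\subseteq A$. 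With this hypothesis now in hand, Lemma \ref{l-cont} is applicable, and since $\varphi_{A,k}$ is continuous it forces $\operatorname*{cl}A-\mathbb{R}_>k\subseteq \operatorname*{int}A$.

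The main obstacle I anticipate is precisely this activation of Lemma \ref{l-cont} in part (2): one cannot invoke it directly, because its structural hypothesis $\operatorname*{cl}A-\mathbb{R}_>k\subseteq A$ is unavailable a priori, and the clean resolution is to route through the equivalence (a) $\Leftrightarrow$ (d) established in part (1), so that continuity itself produces the missing inclusion. Everything else is bookkeeping with the sublevel formula of Corollary \ref{c-sublev} and the fact that translations preserve topological closure.
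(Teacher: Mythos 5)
Your proposal is correct and follows essentially the same route as the paper's proof: part (1) via Corollary \ref{c-sublev}, Lemma \ref{l-semicon}, and Proposition \ref{p-kcl-cl}(d),(e), and part (2) via Lemma \ref{l-cont}, with the hypothesis $\operatorname*{cl}A-\mathbb{R}_{>}k\subseteq A$ supplied in the forward direction exactly as the paper does, by passing from continuity through lower semicontinuity to condition (d) of part (1). The only difference is that you spell out a few bookkeeping steps (translation invariance of closedness, evaluating (c) at $t=0$) that the paper leaves implicit.
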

\begin{proof}
\begin{itemize}
\item[] 
\item[\rm (1)] 
Because of Corollary \ref{c-sublev} and Lemma \ref{l-semicon}, $\varphi _{A,k}$ is lower semicontinuous if and only if $\operatorname*{cl_k}A$ is a closed set. This is equivalent to  $\operatorname*{cl_k}A=\operatorname*{cl}A$ by Proposition \ref{p-kcl-cl}(d).\\
(b) is equivalent to (c) because of  Corollary \ref{c-sublev}.\\
(b) is equivalent to (d) by  Proposition \ref{p-kcl-cl}(e).
\item[\rm (2)] Assume first that $\varphi  _{A,k} $ is continuous. Then it is lower semicontinous, and $\operatorname*{cl}A-\mathbb{R}_{>}k\subseteq A$ by (1). Lemma \ref{l-cont} implies $\operatorname*{cl}A-\mathbb{R}_{>}k\subseteq \operatorname*{int}A$.\\
Assume now $\operatorname*{cl}A-\mathbb{R}_{>}k\subseteq \operatorname*{int}A$. Obviously, $\operatorname*{cl}A-\mathbb{R}_{>}k\subseteq A$. Lemma \ref{l-cont} implies that $\varphi  _{A,k} $ is continuous.
\end{itemize}
\end{proof}

We can restrict the class of sets $A$ and vectors $k$ which have to be used in order to construct all semicontinuous and continuous translative functionals.

\begin{theorem}
Assume $Y$ to be a topological vector space.
\begin{itemize}
\item[\rm (1)] 
The class of lower semicontinuous translative functionals 
\begin{equation}\label{lsc-class1}
\{ \varphi_{A,k} \colon A\subseteq Y, k\in Y\setminus\{  0\} , \varphi_{A,k} \mbox{ lower semicontinuous} \} \end{equation}
coincides with 
\begin{equation}\label{lsc-class2}
\{ \varphi_{A,k}\colon A \subseteq Y, k\in Y\setminus\{ 0\} , \operatorname*{cl}A-\mathbb{R}_{>}k\subseteq A\}
\end{equation}
and with
\begin{equation}\label{lsc-class3}
\{ \varphi_{A,k}\colon A \subseteq Y \mbox{ closed}, k\in -0^+A\setminus\{ 0\} \}.
\end{equation}
\item[\rm (2)] The class of continuous translative functionals 
\begin{equation}\label{cont-class1}
\{\varphi_{A,k} : A\subseteq Y, k\in Y\setminus\{ 0\} , \varphi_{A,k} \mbox{ continuous} \}
\end{equation}
coincides with 
\begin{equation}\label{cont-class2}
\{ \varphi_{A,k}\colon A\subseteq Y, k\in Y\setminus\{ 0\} , \operatorname*{cl}A-\mathbb{R}_{>}k\subseteq \operatorname*{int}A\}
\end{equation}
and with
\begin{equation}\label{cont-class3}
\{ \varphi_{A,k}\colon A \subseteq Y \mbox{ closed}, k\in Y\setminus\{ 0\}, A-\mathbb{R}_{>}k\subseteq \operatorname*{int}A \}.
\end{equation}
\end{itemize}
\end{theorem}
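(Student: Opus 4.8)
The plan is to treat both parts in parallel, since each asserts the equality of three \emph{classes of functions} and the arguments are structurally identical: the first class in each part is defined by a semantic property (lower semicontinuity, resp. continuity), while the second and third are cut out by syntactic inclusions on the generating data $(A,k)$. The bridge between them is Theorem~\ref{t-semicon-alle}, which characterizes these semantic properties \emph{under the standing hypothesis} $k\in -0^+A\setminus\{0\}$. So the first thing I would record is that the side conditions appearing in \eqref{lsc-class2}, \eqref{cont-class2} and \eqref{cont-class3} each already force this hypothesis: from $A\subseteq\operatorname*{cl}A$ one gets $A-\mathbb{R}_{>}k\subseteq\operatorname*{cl}A-\mathbb{R}_{>}k\subseteq A$, and adjoining the trivial case $t=0$ yields $A-\mathbb{R}_{+}k\subseteq A$, i.e. $k\in -0^+A$; since $k\neq 0$ is assumed throughout, $k\in -0^+A\setminus\{0\}$ holds automatically.

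For part (1) I would first show that the family \eqref{lsc-class2} is contained in \eqref{lsc-class1} and conversely. The forward inclusion is immediate: by the observation above, any $\varphi_{A,k}$ from \eqref{lsc-class2} has $k\in -0^+A\setminus\{0\}$, so Theorem~\ref{t-semicon-alle}(1), condition~(d), gives lower semicontinuity. For the reverse inclusion I would take an arbitrary lower semicontinuous $\varphi_{A,k}$ and, via Corollary~\ref{cor-all}, re-represent it as $\varphi_{\tilde A,k}$ with $\tilde A$ being $k$-directionally closed and $k\in -0^+\tilde A\setminus\{0\}$; then Theorem~\ref{t-semicon-alle}(1) converts lower semicontinuity into $\operatorname*{cl}\tilde A-\mathbb{R}_{>}k\subseteq\tilde A$, placing the function in \eqref{lsc-class2}. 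To identify \eqref{lsc-class2} with \eqref{lsc-class3}, the containment of \eqref{lsc-class3} in \eqref{lsc-class2} is trivial, since $A$ closed gives $\operatorname*{cl}A=A$ and hence $\operatorname*{cl}A-\mathbb{R}_{>}k=A-\mathbb{R}_{>}k\subseteq A$. For the opposite containment I would replace $A$ by the closed set $A':=\operatorname*{cl}A$: the chain $\operatorname*{cl}A-\mathbb{R}_{>}k\subseteq A\subseteq\operatorname*{cl}A$ yields $k\in -0^+(\operatorname*{cl}A)\setminus\{0\}$, while Theorem~\ref{t-semicon-alle}(1)(c) together with Corollary~\ref{c-sublev} and Proposition~\ref{p-kcl-cl}(a) shows that both $\varphi_{A,k}$ and $\varphi_{\operatorname*{cl}A,k}$ have the sublevel sets $\operatorname*{cl}A+tk$ for every $t\in\mathbb{R}$; as a functional is recovered from its sublevel sets through $\varphi(y)=\inf\{t\in\mathbb{R} : y\in\operatorname*{sublev}_{\varphi}(t)\}$, the two coincide, so $\varphi_{A,k}=\varphi_{\operatorname*{cl}A,k}$ lies in \eqref{lsc-class3}.

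Part (2) follows the same three moves with Theorem~\ref{t-semicon-alle}(2) in place of Theorem~\ref{t-semicon-alle}(1), so I would only spell out the genuinely new point, the passage from \eqref{cont-class2} into \eqref{cont-class3}: continuity implies lower semicontinuity, so the argument of part~(1) already gives $\varphi_{A,k}=\varphi_{\operatorname*{cl}A,k}$ with $\operatorname*{cl}A$ closed, and it remains to upgrade the inclusion to $\operatorname*{cl}A-\mathbb{R}_{>}k\subseteq\operatorname*{int}(\operatorname*{cl}A)$, which follows from $\operatorname*{cl}A-\mathbb{R}_{>}k\subseteq\operatorname*{int}A\subseteq\operatorname*{int}(\operatorname*{cl}A)$. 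I expect the main obstacle to be bookkeeping of the representation rather than any hard estimate: because the three families are sets of functions, every inclusion must be established \emph{up to re-representation} of the generating pair, and the crux is the verification that passing from $A$ to $\operatorname*{cl}A$ leaves the Gerstewitz functional unchanged. That is exactly where Theorem~\ref{t-semicon-alle}(1)(c), giving $\operatorname*{sublev}_{\varphi_{A,k}}(t)=\operatorname*{cl}A+tk$, and the fact that a functional is determined by its sublevel sets do the real work, while everything else reduces to the elementary inclusion chasing recorded above.
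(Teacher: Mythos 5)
Your proposal is correct and follows essentially the same route as the paper: both rest on Corollary \ref{cor-all} (re-representation with $k\in -0^+A\setminus\{0\}$), on Theorem \ref{t-semicon-alle} for the semicontinuity and continuity characterizations, and on the fact that the sublevel sets equal $\operatorname*{cl}A+tk$, so that $\varphi_{A,k}=\varphi_{\operatorname*{cl}A,k}$. The only difference is organizational: you establish the pairwise inclusions in both directions, whereas the paper closes a single cycle of three inclusions.
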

\begin{proof}
\begin{itemize}
\item[] 
\item[\rm (1)] 
\begin{itemize}
\item[\rm (a)]Take first any $A\subseteq Y$ and $k\in Y\setminus\{  0\}$ for which $\varphi :=\varphi_{A,k}$ is lower semicontinuous. Because of Corollary \ref{cor-all}, there exist $\tilde{A}\subseteq Y$ and $\tilde{k}\in -0^+\tilde{A}\setminus\{  0\}$ with $\varphi =\varphi_{\tilde{A},\tilde{k}}$. 
Theorem \ref{t-semicon-alle} implies $\operatorname*{cl}\tilde{A}-\mathbb{R}_{>}\tilde{k}\subseteq \tilde{A}$. Hence, set (\ref{lsc-class1}) is contained in set (\ref{lsc-class2}).
\item[\rm (b)] Take now any $A \subseteq Y$ and $k\in Y\setminus\{ 0\}$ with $\operatorname*{cl}A-\mathbb{R}_{>}k\subseteq A$.
Consider $\varphi :=\varphi_{A,k}$. $\operatorname*{cl}A-\mathbb{R}_{>}k\subseteq A$ implies $k\in -0^+A\setminus\{  0\}$ and $k\in -0^+(\operatorname*{cl}A)\setminus\{  0\}$. Theorem \ref{t-semicon-alle} implies  $\operatorname*{sublev}\nolimits_{\varphi }(t)=\operatorname*{cl}A+tk $
for all $t \in\mathbb{R}$. Proposition \ref{prop-vor-theo-and} yields $\varphi =\varphi_{\operatorname*{cl}A,k}$.
Hence, set (\ref{lsc-class2}) is contained in set (\ref{lsc-class3}).
\item[\rm (c)] Take any closed set $A \subseteq Y$ and $k\in -0^+A\setminus\{ 0\}$. 
Then $\operatorname*{cl}A-\mathbb{R}_{>}k=A-\mathbb{R}_{>}k\subseteq A$. Theorem \ref{t-semicon-alle} implies that $\varphi_{A,k}$ is lower semicontinuous. Hence, set (\ref{lsc-class3}) is contained in set (\ref{lsc-class1}).
\end{itemize}
\item[\rm (2)] 
\begin{itemize}
\item[\rm (a)]Take first any $A\subseteq Y$ and $k\in Y\setminus\{  0\}$ for which $\varphi :=\varphi_{A,k}$ is continuous. Because of Corollary \ref{cor-all}, there exist $\tilde{A}\subseteq Y$ and $\tilde{k}\in -0^+\tilde{A}\setminus\{  0\}$ with $\varphi =\varphi_{\tilde{A},\tilde{k}}$. 
Theorem \ref{t-semicon-alle} implies $\operatorname*{cl}\tilde{A}-\mathbb{R}_{>}\tilde{k}\subseteq \operatorname*{int}\tilde{A}$.
Hence, set (\ref{cont-class1}) is contained in set (\ref{cont-class2}).
\item[\rm (b)] Take now any $A \subseteq Y$ and $k\in Y\setminus\{ 0\}$ with $\operatorname*{cl}A-\mathbb{R}_{>}k\subseteq \operatorname*{int}A$.
Consider $\varphi :=\varphi_{A,k}$. $\operatorname*{cl}A-\mathbb{R}_{>}k\subseteq \operatorname*{int}A$ implies $k\in -0^+A\setminus\{  0\}$. Theorem \ref{t-semicon-alle} yields $\operatorname*{sublev}\nolimits_{\varphi_{A,k}}(t)=\operatorname*{cl}A+tk $
for all $t \in\mathbb{R}$. Hence, $\varphi =\varphi_{\operatorname*{cl}A,k}$ by Proposition \ref{prop-vor-theo-and}.
Because of $\operatorname*{cl}A-\mathbb{R}_{>}k\subseteq \operatorname*{int}A\subseteq \operatorname*{int}(\operatorname*{cl}A)$, set (\ref{cont-class2}) is contained in set (\ref{cont-class3}).
\item[\rm (c)] Take any closed set $A \subseteq Y$ and $k\in Y\setminus\{ 0\}$ with $A-\mathbb{R}_{>}k\subseteq \operatorname*{int}A$. $\varphi_{A,k}$ is continuous by Theorem \ref{t-semicon-alle}. 
Hence, set (\ref{cont-class3}) is contained in set (\ref{cont-class1}).
\end{itemize}
\end{itemize}
\end{proof}

\begin{remark}
For topological vector spaces $Y$ and $k\in Y\setminus\{  0\}$, the equivalence between 
$\{ \varphi \colon Y\to\overline{\mathbb{R}}\colon \varphi \; k\mbox{-translative and lower semicontinuous} \}$
and $\{ \varphi_{A,k}\colon A \subseteq Y \mbox{ closed}, \, k\in -0^+A\setminus\{  0\}\}$
was proved in \cite[Corollary 8]{Ham06}, and the equivalence between 
$\{ \varphi \colon Y\to\overline{\mathbb{R}}\colon \varphi \; k\mbox{-translative and continuous} \}$
and $\{ \varphi_{A,k}\colon A \subseteq Y \mbox{ closed}, \, A-\mathbb{R}_{>}k\subseteq \operatorname*{int}A \}$
can be deduced from \cite[Corollary 9]{Ham06}.
\end{remark}

Let us mention some properties of Gerstewitz functionals. 

\begin{definition}\label{d-cx_ua}
A functional $\varphi :Y\to \overline{\mathbb{R}}$ is said to be
\begin{itemize}
\item[\rm (a)] convex if  $\operatorname*{epi}\varphi$ is convex, 
\item[\rm (b)] positively homogeneous if  $\operatorname*{epi}\varphi$ is a nonempty cone,
\item[\rm (c)] subadditive if  $\operatorname*{epi}\varphi +\operatorname*{epi}\varphi \subseteq \operatorname*{epi}\varphi$,
\item[\rm (d)] sublinear if $\operatorname*{epi}\varphi$ is a nonempty convex cone.
\end{itemize}
\end{definition}

If $\varphi$ is finite-valued, the above properties are equivalent to those for real-valued functions \cite{Wei18c}.

We get from \cite[Theorem 2.3]{Wei17b}:

\begin{proposition}\label{p-cx_ua}
Assume that $A$ is a proper, $k$-directionally closed subset of $\,Y$ with $k\in -0^+A\setminus\{0\}$. 
\begin{itemize}
\item[\rm (a)] $\varphi  _{A,k} $ is convex if and only if $A$ is convex.
\item[\rm (b)] $\varphi  _{A,k} $ is positively homogeneous if and only if $A$ is a cone.
\item[\rm (c)] $\varphi  _{A,k} $ is subadditive if and only if $A+A\subseteq A$.
\item[\rm (d)] $\varphi  _{A,k} $ is sublinear if and only if $A$ is a convex cone.
\end{itemize}
\end{proposition}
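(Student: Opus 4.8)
The plan is to reduce all four equivalences to a single clean description of the epigraph and then to transfer set-theoretic properties across a linear map. The first step consumes the hypotheses: since $A$ is $k$-directionally closed we have $\operatorname*{cl}_k A = A$, so Corollary \ref{c-sublev} (which uses $k\in -0^+A\setminus\{0\}$) gives $\operatorname*{sublev}_{\varphi_{A,k}}(t)=A+tk$ for every $t\in\mathbb{R}$. Reading this through the definitions of the sublevel set and the epigraph, or equivalently invoking the epigraph characterization in Proposition \ref{prop-vor-theo-and} with $\operatorname*{sublev}_{\varphi_{A,k}}(0)=A$, yields the crucial identity
\[
\operatorname*{epi}\varphi_{A,k}=\{(y,t)\in Y\times\mathbb{R}\colon y-tk\in A\}.
\]
Everything after this point is formal, so I expect this first paragraph to be the only genuine obstacle: it is exactly here that $k$-directional closedness and $k\in -0^+A$ are used, and without them the epigraph would not collapse to this form.

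Next I would introduce the linear map $T\colon Y\times\mathbb{R}\to Y$, $T(y,t):=y-tk$. It is surjective since $T(a,0)=a$, and the identity above reads $\operatorname*{epi}\varphi_{A,k}=T^{-1}(A)$, whence $T(\operatorname*{epi}\varphi_{A,k})=A$ by surjectivity. Because $T$ is linear, the preimage of a convex set is convex, the preimage of a cone (in the sense $\lambda C\subseteq C$ for $\lambda\in\mathbb{R}_+$) is a cone, and the preimage of a set $S$ with $S+S\subseteq S$ is again closed under addition; the same preservation holds for images under $T$. Nonemptiness passes in both directions since $A\neq\emptyset$ by properness and $T$ is onto.

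Combining these stability facts with Definition \ref{d-cx_ua} settles the four parts. For convexity, $\varphi_{A,k}$ is convex iff $\operatorname*{epi}\varphi_{A,k}=T^{-1}(A)$ is convex iff $A=T(\operatorname*{epi}\varphi_{A,k})$ is convex, giving (a). The same template with ``cone'' (plus nonemptiness) gives (b), and with ``closed under addition'' gives (c). Part (d) then follows by conjoining (a) and (b), since $A$ is a convex cone exactly when $\operatorname*{epi}\varphi_{A,k}$ is a nonempty convex cone, i.e.\ exactly when $\varphi_{A,k}$ is sublinear. For the reverse implications in (a)--(c) it is cleanest not to invoke image-of-linear-map lemmas at all, but to test the epigraph on pairs $(a,0)$ with $a\in A$: each such pair lies in $\operatorname*{epi}\varphi_{A,k}$, and applying the assumed property of the epigraph (a convex combination, a nonnegative scaling, a sum) to these pairs reads off the corresponding property of $A$ immediately.

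A final point worth flagging is that $\varphi_{A,k}$ may attain $\pm\infty$, but because convexity, positive homogeneity, subadditivity and sublinearity are defined here purely through the epigraph, no arithmetic in $\overline{\mathbb{R}}$ enters and the entire argument remains set-theoretic. Thus the proof has essentially one idea, the epigraph identity, after which the four cases are mechanical.
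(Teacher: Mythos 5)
Your argument is correct. The hinge is the identity $\operatorname*{epi}\varphi_{A,k}=\{(y,t)\in Y\times\mathbb{R}\colon y-tk\in A\}$, which you obtain legitimately: Corollary \ref{c-sublev} gives $\operatorname*{sublev}_{\varphi_{A,k}}(t)=\operatorname*{cl}_kA+tk$ under $k\in-0^+A\setminus\{0\}$, and $k$-directional closedness collapses $\operatorname*{cl}_kA$ to $A$, so the epigraph is exactly $T^{-1}(A)$ for the surjective linear map $T(y,t)=y-tk$. After that, each of (a)--(c) reduces to the fact that preimages under $T$ preserve convexity, the cone property and closedness under addition, while the reverse implications follow from testing the epigraph on the slice $\{(a,0)\colon a\in A\}$ (or, equivalently, from $T(T^{-1}(A))=A$ by surjectivity); (d) is the conjunction of (a) and (b), with nonemptiness supplied by properness of $A$. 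Note, however, that the paper itself does not prove this proposition: it is quoted from Theorem 2.3 of the reference \cite{Wei17b}, so there is no in-paper argument to compare against. What your write-up adds is a self-contained proof that isolates where each hypothesis enters ($k$-directional closedness and $k\in-0^+A\setminus\{0\}$ are consumed entirely in establishing the epigraph identity, and properness only guarantees $A\neq\emptyset$ for the nonemptiness clauses in Definition \ref{d-cx_ua}(b),(d)), and it makes transparent why all four equivalences are instances of one transfer principle along a linear surjection. The only point worth stating explicitly in a final version is the elementary equivalence $(y,t)\in\operatorname*{epi}\varphi\Leftrightarrow y\in\operatorname*{sublev}_{\varphi}(t)$, which you use silently to pass from Corollary \ref{c-sublev} to the epigraph description; this is also available directly from Proposition \ref{prop-vor-theo-and}.
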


The following statement was proved in \cite[Theorem 2.2]{Wei17b}.

\begin{proposition}\label{p-finval}
Assume that $A$ is a proper subset of $\,Y$ and $k\in -\operatorname*{core}0^+A$. 
Then $\varphi  _{A,k} $ is finite-valued.
\end{proposition}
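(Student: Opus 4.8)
The plan is to show directly that $\varphi_{A,k}(y)\in\mathbb{R}$ for every $y\in Y$, which splits into two separate bounds: $\varphi_{A,k}(y)<+\infty$ (the set $\{t\in\mathbb{R}\colon y-tk\in A\}$ is nonempty) and $\varphi_{A,k}(y)>-\infty$ (this set is bounded below). The tools I would rely on are the following consequences of the hypothesis $k\in-\operatorname*{core}0^+A$: first, $-k\in 0^+A$, so that $A+\mathbb{R}_+(-k)\subseteq A$ and, more generally, $A+0^+A\subseteq A$ (take the coefficient $1$ in the definition of the recession cone); second, $0^+A$ is a cone; and third, the algebraic-interior property of $-k$, namely that for every direction $v\in Y$ there is some $\varepsilon>0$ with $-k+sv\in 0^+A$ for all $s\in[0,\varepsilon]$.

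For the upper bound I would fix $y\in Y$ and, using that $A$ is proper (in particular nonempty), pick $a\in A$. Applying the core property in the direction $v:=y-a$ gives $\varepsilon>0$ with $-k+sv\in 0^+A$ for $s\in[0,\varepsilon]$. Choosing $t\geq 1/\varepsilon$ and $s:=1/t\leq\varepsilon$, and scaling by $t>0$ (legitimate since $0^+A$ is a cone), yields $-tk+(y-a)\in 0^+A$. Adding $a\in A$ and invoking $A+0^+A\subseteq A$ gives $y-tk\in A$, so $\varphi_{A,k}(y)\leq t<+\infty$.

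For the lower bound I would argue by contradiction: suppose $\varphi_{A,k}(y_0)=-\infty$ for some $y_0$. Since $-k\in 0^+A$, the set $\{t\colon y_0-tk\in A\}$ is upward closed (if $y_0-t_0k\in A$ then $y_0-tk\in A$ for all $t\geq t_0$), so an infimum equal to $-\infty$ forces $y_0-tk\in A$ for every $t\in\mathbb{R}$, i.e.\ the whole line $y_0+\mathbb{R}k\subseteq A$. Then, for an arbitrary $z\in Y$, the core property in the direction $v:=z-y_0$ produces $s>0$ with $-k+sv\in 0^+A$; scaling by $1/s$ gives $-\tfrac1s k+(z-y_0)\in 0^+A$, and adding the point $y_0+\tfrac1s k\in A$ yields $z\in A+0^+A\subseteq A$. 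Hence $A=Y$, contradicting $A\neq Y$; therefore $\varphi_{A,k}(y)>-\infty$ for all $y$.

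The routine parts are the cone-scaling and the identity $A+0^+A\subseteq A$. The step I expect to require the most care is the lower bound: one must first extract from $\varphi_{A,k}(y_0)=-\infty$ the strong conclusion that an entire $k$-line lies in $A$ (which is exactly where the recession direction $-k$ is used), and then convert the algebraic-interior hypothesis into a genuine covering of all of $Y$ so that properness can be contradicted. It is also worth double-checking the convention for $\operatorname*{core}$ to ensure that the needed one-sided statement $-k+sv\in 0^+A$ for small $s>0$ holds in every direction $v$.
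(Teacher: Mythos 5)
Your proof is correct. The paper does not prove Proposition \ref{p-finval} itself but cites it from \cite[Theorem 2.2]{Wei17b}, so there is no in-text proof to compare against; your argument is the natural self-contained one, and both halves are sound: the core property of $-k$ in $0^+A$, cone-scaling, and $A+0^+A\subseteq A$ place $y-tk$ in $A$ for all large $t$, while the contradiction argument correctly upgrades $\varphi_{A,k}(y_0)=-\infty$ to a full line $y_0+\mathbb{R}k\subseteq A$ and then to $A=Y$. The only point worth making explicit is that ``proper subset'' must exclude $A=\emptyset$ (as the paper intends), since $0^+\emptyset=Y$ would make the hypothesis on $k$ vacuously satisfiable while $\varphi_{\emptyset,k}\equiv+\infty$.
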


We illustrate the statements of this section by an example.

\begin{example}\label{ex-tildeA}
In $Y=\mathbb{R}^2$, choose $A:=\{ (y_1,y_2)^T\in Y\colon 0< y_1\leq 2, 0\leq y_2\leq 2\}$ and $k:=(-1,-1)^T$.
Then
$A-\mathbb{R}_{+}k=\{ (y_1,y_2)^T\in Y\colon 0< y_1< 2, 0\leq y_2 < y_1+2\}\cup
\{ (y_1,y_2)^T\in Y\colon y_1\geq 2, y_1-2\leq y_2 < y_1+2\}.$
$\tilde{A}:=\operatorname*{cl}_k(A-\mathbb{R}_{+}k)=\{ (y_1,y_2)^T\in Y\colon 0\leq y_1< 2, 0\leq y_2 < y_1+2\}\cup \{ (y_1,y_2)^T\in Y\colon y_1\geq 2, y_1-2\leq y_2 < y_1+2\}$, and
$\operatorname*{dom}\varphi_{\tilde{A},k}=\{ (y_1,y_2)^T\in Y\colon y_1-2\leq y_2 < y_1+2\}.$ By Theorem \ref{t-sublevequ}, $\varphi_{A,k}=\varphi_{\tilde{A},k}$, $k\in -0^+\tilde{A}\setminus\{ 0\}$ and
$$\operatorname*{sublev}\nolimits_{\varphi_{A,k}}(t)=\tilde{A}+tk \mbox{ for all } t \in\mathbb{R}.$$
We have
\[ \varphi_{A,k}(y)=\left\{
\begin{array}{r@{\quad\mbox{ if }}l}
t & y\in (\operatorname*{bd}\mathbb{R}^{2}_{+}+tk)\cap \operatorname*{dom}\varphi_{A,k},\\
+\infty & y\in Y\setminus \operatorname*{dom}\varphi_{A,k}.
\end{array}
\right.
\]
$\varphi_{A,k}$ is convex by Proposition \ref{p-cx_ua}. Since $\operatorname*{cl}\tilde{A}-\mathbb{R}_{>}k\not\subseteq \tilde{A}$, Theorem \ref{t-semicon-alle} implies that $\varphi_{A,k}$ is not lower semicontinuous.\\ 
Note that the restriction of $\varphi_{A,k}$  to its effective domain is a continuous function. For $\bar{A}:=A\cup \{ (0,2)^T\}$, we get a convex, lower semicontinuous functional $\varphi_{\bar{A},k}$, which coincides with $\varphi_{A,k}$ on $\operatorname*{dom}\varphi_{A,k}$. Since $\operatorname*{cl}\bar{A}-\mathbb{R}_{>}k\not\subseteq \operatorname*{int}\bar{A}$, Theorem \ref{t-semicon-alle} implies that $\varphi_{\bar{A},k}$ is not continuous. But the restriction of $\varphi_{\bar{A},k}$ to its effective domain is a continuous function.
\end{example}

\section{Translative functions with sublevel sets  given by linear inequalities}\label{sec-poly-fktal}

We will now derive some details for functionals $\varphi_{A,k}$ with $A$ being given by linear inequalities. This includes the case that $A$ is a polyhedral cone.

In this section, we start with sets $A$ which are given by linear inequality constraints in a topological vector space $Y$. These results will be applied to polyhedral sets in the Euclidean space. We will work with sets $A$ fulfilling the following assumption:

\vspace{0.2cm}
\begin{tabular}{ll}
(HT$_{A}$): & $\quad A$ is a proper subset of the topological vector space $Y$\\
& \quad which is given by
$A=\{y\in Y\colon g(y)\leq b_g \mbox{ for all } g\in \Gamma \}$\\
& $\quad\mbox{with }\Gamma\subset Y^{*}\setminus\{ 0\} \mbox{ and } b_g\in\mathbb{R} \mbox{ for each } g\in \Gamma $\\
& $\quad$ such that $0^+A\not=\{ 0\} ,$
\end{tabular}

\vspace{0.2cm}
where $Y^{*}$ denotes the space of all continuous linear mappings from $Y$ into $\mathbb{R}$.

Consider any set $A$ which fulfills (HT$_{A}$). It is easy to check that $A$ is an unbounded, closed, convex  subset of $Y$ and that 
$$0^+A=\{y\in Y\colon g(y)\leq 0 \mbox{ for all } g\in \Gamma\}.$$ 
$k\in -0^+A$ is equivalent to the condition that $g(k)\geq 0$ holds for all $g\in \Gamma$. 

\begin{proposition}\label{Fact4-HT}
Assume {\rm (HT}$_{A}${\rm )} and $k\in Y$ such that $g(k)\geq 0$ holds for all $g\in \Gamma$ and that there exists some $g\in \Gamma$ with $g(k)>0$.
\begin{itemize}
\item[\rm (a)] $\varphi _{A,k}$ is convex, proper and lower semicontinuous on $\operatorname*{dom}\varphi _{A,k}$.
\item[\rm (b)] If $\operatorname*{int}A\not=\emptyset$, then $\varphi _{A,k}$ is continuous on $\operatorname*{int}\operatorname*{dom}\varphi _{A,k}$.
\item[\rm (c)] If $\,Y$ is a Banach space, then $\varphi_{A,k}$ is locally Lipschitz on $\operatorname*{int}\operatorname*{dom}\varphi_{A,k}$.
\item[\rm (d)] $\varphi _{A,k}$ is subadditive if and only if $b_g\leq 0$ for all $g\in \Gamma$.
\item[\rm (e)] $\varphi _{A,k}$ is sublinear if and only if $b_g= 0$ for all $g\in \Gamma$.
\end{itemize}
\end{proposition}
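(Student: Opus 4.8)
The plan is to exploit two structural facts about any $A$ satisfying (HT$_{A}$): it is closed and convex, and the hypothesis on $k$ says exactly that $k\in -0^+A$ with $k\neq 0$. Indeed, $g(k)\geq 0$ for all $g\in\Gamma$ is the characterization of $k\in -0^+A$ recorded before the proposition, while the existence of some $g_0\in\Gamma$ with $g_0(k)>0$ means $g(k)\leq 0$ cannot hold for all $g$, so $k\notin 0^+A$; as $0\in 0^+A$ this gives $k\neq 0$. Because $A$ is closed it is algebraically closed, so Proposition \ref{l-dir-clsd-suff} shows $A$ is $k$-directionally closed. Hence $A$ is a proper, $k$-directionally closed set with $k\in -0^+A\setminus\{0\}$, which is precisely the hypothesis needed to invoke Proposition \ref{p-cx_ua} and Theorem \ref{t-semicon-alle}. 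Throughout I would also use the explicit representation, valid on $\operatorname*{dom}\varphi_{A,k}$,
\[
\varphi_{A,k}(y)=\sup\Big\{\tfrac{g(y)-b_g}{g(k)}\colon g\in\Gamma,\ g(k)>0\Big\},
\]
obtained by rewriting $y-tk\in A$ as $t\,g(k)\geq g(y)-b_g$ for every $g\in\Gamma$ and solving for $t$ (the constraints with $g(k)=0$ only cut out the domain).

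For (a), convexity is immediate from Proposition \ref{p-cx_ua}(a) since $A$ is convex. For lower semicontinuity I would use Theorem \ref{t-semicon-alle}(1): as $A$ is closed we have $\operatorname*{cl}A=A$, and $k\in -0^+A$ gives $\operatorname*{cl}A-\mathbb{R}_{>}k=A-\mathbb{R}_{>}k\subseteq A$, so condition (d) there holds and $\varphi_{A,k}$ is lower semicontinuous on $Y$, hence on $\operatorname*{dom}\varphi_{A,k}$. Properness is the only piece needing a direct argument: $A\subseteq\operatorname*{dom}\varphi_{A,k}$ (each $a\in A$ satisfies $\varphi_{A,k}(a)\leq 0$) shows the domain is nonempty, and fixing $g_0$ with $g_0(k)>0$, the inequality $t\,g_0(k)\geq g_0(y)-b_{g_0}$ valid for every admissible $t$ yields $\varphi_{A,k}(y)\geq (g_0(y)-b_{g_0})/g_0(k)>-\infty$ for all $y$; thus $\varphi_{A,k}$ never takes the value $-\infty$ and is proper.

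For (b) I would argue by boundedness above. If $\operatorname*{int}A\neq\emptyset$, pick $a_0\in\operatorname*{int}A$ and a neighborhood $V$ of $0$ with $a_0+V\subseteq A$; then $\varphi_{A,k}\leq 0$ on $a_0+V$ because $a_0+V\subseteq A=\operatorname*{sublev}_{\varphi_{A,k}}(0)$. Hence the proper convex function $\varphi_{A,k}$ is bounded above on a neighborhood of the domain point $a_0$, and the classical continuity theorem for convex functions on topological vector spaces then gives continuity on all of $\operatorname*{int}\operatorname*{dom}\varphi_{A,k}$. Part (c) is the standard normed-space sharpening: a finite convex function that is continuous on an open convex subset of a Banach (indeed any normed) space is locally Lipschitz there, so (c) follows from (b).

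Finally, for (d) and (e) I would first reduce to conditions on $A$ via Proposition \ref{p-cx_ua}: subadditivity of $\varphi_{A,k}$ is equivalent to $A+A\subseteq A$, and sublinearity to $A$ being a convex cone (convexity being automatic). The implications from the sign of the $b_g$ are easy: if $b_g\leq 0$ for all $g$ then $g(a_1)+g(a_2)\leq 2b_g\leq b_g$ gives $A+A\subseteq A$; if $b_g=0$ for all $g$ then $A=\{y\colon g(y)\leq 0\ \forall g\in\Gamma\}=0^+A$ is a convex cone. I expect the converse implications to be the main obstacle, since they must recover the sign of each individual $b_g$ from a purely set-theoretic property. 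The natural argument is that $A+A\subseteq A$ forces $na\in A$ for all $a\in A$ and $n\in\mathbb{N}_{>}$, whence $n\,g(a)\leq b_g$, and letting $n\to\infty$ gives $g(a)\leq 0$, i.e. $\sup_{a\in A}g(a)\leq 0$ (and $=0$ in the cone case, using $0\in A$). The passage from $\sup_{a\in A}g(a)\leq 0$ to $b_g\leq 0$ (respectively $b_g=0$) requires each inequality to be tight, $b_g=\sup_{a\in A}g(a)$; this is the delicate point one must secure from the description of $A$, and it holds automatically for the irredundant systems (in particular the polyhedral ones) this section targets.
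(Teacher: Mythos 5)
Your setup is correct, and your treatment of (a) and (b) is sound but takes a genuinely different route from the paper: the paper disposes of (a)--(c) in one line by citing an external result (Prop.\ 4.5 of the author's paper on lower semicontinuous functionals with uniform sublevel sets), whereas you rebuild them from Proposition \ref{p-cx_ua}, Theorem \ref{t-semicon-alle}, the explicit lower bound $\varphi_{A,k}(y)\geq (g_0(y)-b_{g_0})/g_0(k)$, and the classical theorem that a proper convex function bounded above on a neighborhood of a point of its domain is continuous (locally Lipschitz, in a normed space) on the interior of its domain. That is a legitimate and more self-contained alternative. The one step you should not wave through is ``(c) follows from (b)'': part (c) does not assume $\operatorname*{int}A\neq\emptyset$. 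You can close this by noting that $\operatorname*{dom}\varphi_{A,k}=A+\mathbb{R}k=\bigcup_{n\in\mathbb{N}}(A+nk)$ (using $A-\mathbb{R}_{+}k\subseteq A$), a countable union of closed sets, so in a Banach space Baire's theorem shows that $\operatorname*{int}\operatorname*{dom}\varphi_{A,k}\neq\emptyset$ already forces $\operatorname*{int}A\neq\emptyset$; otherwise (c) is vacuous.

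For (d) and (e) you follow exactly the paper's reduction: Proposition \ref{p-cx_ua} turns subadditivity into $A+A\subseteq A$ and sublinearity into ``$A$ is a convex cone,'' and the paper's proof in fact stops there, leaving the equivalence with the sign conditions on the $b_g$ unproved. The obstacle you flag in the ``only if'' direction is genuine and cannot be overcome from (HT$_{A}$) alone: with $Y=\mathbb{R}^2$, $\Gamma=\{g_1,g_2\}$, $g_1(y)=y_1$, $b_{g_1}=0$, $g_2(y)=2y_1$, $b_{g_2}=1$, $k=(1,0)^T$, one has $A=\{y\colon y_1\leq 0\}$ and $\varphi_{A,k}(y)=y_1$, which is sublinear although $b_{g_2}>0$. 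So the ``only if'' halves of (d) and (e) require precisely the tightness $b_g=\sup_{a\in A}g(a)$ that you identify, and this is an additional irredundancy-type hypothesis contained neither in (HT$_{A}$) nor in (H$_{A}$); your parenthetical claim that it holds automatically for the polyhedral systems of this section is not correct either, since a description $Wy\leq b$ may contain redundant rows. In short, your argument is at least as complete as the paper's, and the residual gap you point out is a gap in the statement itself rather than in your proof.
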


\begin{proof}
(a)-(c) follow from \cite[Prop. 4.5]{Wei17a}, where the statements were proved for proper closed convex sets $A$ with $k\in -0^+A\setminus 0^+A$ and $A-\mathbb{R}_>k\subseteq A$.\\
According to Proposition \ref{p-cx_ua}, $\varphi _{A,k}$ is subadditive if and only $A+A\subseteq A$. This proposition also states that $\varphi _{A,k}$ is sublinear if and only if $A$ is a convex cone. 
\end{proof}

The formula for the functional $\varphi _{A,k}$ is described in the following proposition.

\begin{proposition}\label{p-phi-inequ}
Assume {\rm (HT}$_{A}${\rm )}.\\
Take any $k\in Y\setminus\{0\}$ with $g(k)\geq 0$ for all $g\in \Gamma$.
\begin{itemize}
\item[\rm (a)] If $\,g(k)=0$ for all $g\in \Gamma $, we have $\operatorname*{dom}\varphi _{A,k}=A$ and
$$\varphi _{A,k}(y)=-\infty \mbox{ for each } y\in A.$$
\item[\rm (b)] If there exist some $g_1\in \Gamma $ such that $\,g_1(k) >0$, we get\\
$\operatorname*{dom}\varphi _{A,k}=\{y\in Y\colon g(y)\leq b_g\mbox{ for all } g\in {\Gamma }^{{\rm act}} \mbox{ and } \sup_{g\notin {\Gamma }^{{\rm act}}}\frac{g(y)- b_g}{g(k)}\in\mathbb{R}\} $\\
and 
$$\varphi _{A,k}(y)=\sup_{g\notin {\Gamma }^{{\rm act}}}\frac{g(y)- b_g}{g(k)} \mbox{ for all }  y\in\operatorname*{dom}\varphi _{A,k},$$ 
where ${\Gamma }^{{\rm act}}:=\{g\in \Gamma \colon g(k)=0\}$.
\item[\rm (c)] If $\Gamma$ consists of a finite number of functions and $g(k)> 0 \mbox{ for all } g\in \Gamma$, then $\varphi _{A,k}$ is finite-valued with
$$\varphi _{A,k}(y)=\max_{g\in \Gamma }\frac{g(y)- b_g}{g(k)}\mbox{ for all } y\in Y.$$
\end{itemize}
\end{proposition}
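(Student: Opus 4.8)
The plan is to work directly from the definition $\varphi_{A,k}(y)=\inf\{t\in\mathbb{R}\colon y-tk\in A\}$ together with the inequality description $A=\{y\colon g(y)\le b_g\text{ for all }g\in\Gamma\}$. The central observation is that $y-tk\in A$ means $g(y-tk)=g(y)-t\,g(k)\le b_g$ for every $g\in\Gamma$, so membership in $A+tk$ translates into a family of scalar inequalities in $t$, one for each constraint $g$. I would split these constraints into the two families indexed by $\Gamma^{\mathrm{act}}=\{g\colon g(k)=0\}$ and its complement, since the sign of $g(k)$ controls how $t$ enters.

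For part (a), with $g(k)=0$ for all $g$, the constraint $g(y)-t\,g(k)\le b_g$ reduces to $g(y)\le b_g$, which is independent of $t$. Hence $y-tk\in A$ holds for some (equivalently every) $t$ precisely when $y\in A$, giving $\operatorname*{dom}\varphi_{A,k}=A$; and when $y\in A$ the admissible set of $t$ is all of $\mathbb{R}$, so the infimum is $-\infty$. For part (b), I would fix $y$ and analyze $\{t\colon y-tk\in A\}$ as the intersection over $g\in\Gamma$ of the solution sets of $g(y)-t\,g(k)\le b_g$. For $g\in\Gamma^{\mathrm{act}}$ this is again the $t$-independent condition $g(y)\le b_g$, which must hold for $y$ to lie in the domain at all. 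For $g\notin\Gamma^{\mathrm{act}}$ we have $g(k)>0$, so the inequality rearranges to $t\ge \frac{g(y)-b_g}{g(k)}$; intersecting these lower bounds over all such $g$ gives the admissible set $\{t\colon t\ge \sup_{g\notin\Gamma^{\mathrm{act}}}\frac{g(y)-b_g}{g(k)}\}$, provided the supremum is finite. The infimum of this set is exactly that supremum, which yields the asserted formula and the domain description. Part (c) is the special case where $\Gamma$ is finite and $\Gamma^{\mathrm{act}}=\emptyset$: the supremum becomes a maximum over finitely many terms, hence always a real number, so $\varphi_{A,k}$ is finite-valued everywhere; the active-constraint restrictions in the domain disappear.

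I expect the main obstacle to be the bookkeeping in part (b) around whether the supremum is attained versus merely approached, and the interplay between the $\Gamma^{\mathrm{act}}$ constraints (which can force $y\notin\operatorname*{dom}\varphi_{A,k}$ or even $\varphi_{A,k}(y)=+\infty$) and the supremum being finite. Concretely, one must check that $\varphi_{A,k}(y)=+\infty$ (the infimum over an empty set of admissible $t$) occurs exactly when either some active constraint $g(y)\le b_g$ fails or the supremum over the inactive constraints is $+\infty$, and that this is consistent with the stated domain. The hypothesis that there exists $g_1$ with $g_1(k)>0$ guarantees the inactive family is nonempty, so the supremum is taken over a nonempty index set; combined with $k\in -0^+A$ (equivalently $g(k)\ge 0$ for all $g$), which rules out upper bounds on $t$ from any constraint, this ensures the admissible $t$-set is always a lower half-line and the infimum formula is valid. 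Once the sign analysis of $g(k)$ is organized cleanly, each part follows by elementary manipulation of the scalar inequalities.
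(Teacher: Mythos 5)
Your proposal is correct and follows essentially the same route as the paper's proof: translating $y-tk\in A$ into the scalar inequalities $g(y)-t\,g(k)\le b_g$, splitting $\Gamma$ into $\Gamma^{\mathrm{act}}$ and its complement according to the sign of $g(k)$, and reading off the admissible set of $t$ as a lower half-line whose infimum gives the stated formula. The bookkeeping concerns you raise about the domain and the $+\infty$ case are legitimate but resolve exactly as you describe, so no gap remains.
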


\begin{proof}
$\forall y\in Y, t\in\mathbb{R}:\,
y\in A+tk\Leftrightarrow y-tk\in A\Leftrightarrow (\forall\, g\in \Gamma:\,g(y-tk)\leq b_g) \Leftrightarrow (\forall\, g\in \Gamma:\,g(y)-tg(k)\leq b_g).$\\
In the case {\rm (a)}, this yields: $\forall y\in Y, t\in\mathbb{R}:\,
y\in A+tk\Leftrightarrow y\in A$, which results in the assertion for this case since $\operatorname*{dom}\varphi _{A,k}=A+\mathbb{R}k$ and by the definition of $\varphi _{A,k}$.\\
If there exist some $g_1\in \Gamma $ such that $\,g_1(k) > 0$, then: $\forall y\in Y, t\in\mathbb{R}:\,
y\in A+tk\Leftrightarrow (g(y)\leq b_g\mbox{ for all } g\in {\Gamma }^{{\rm act}} \mbox{ and } t\geq \frac{g(y)- b_g}{g(k)}\mbox{ for all } g\notin {\Gamma }^{{\rm act}}).$ This yields {\rm (b)}.\\
{\rm (c)} follows from (b). 
\end{proof}

We will now investigate in which way shifts in the constraints of $A$ influence the values of $\varphi_{A,k}$ and how perturbations in the function value of $\varphi_{A,k}$ can be interpreted as perturbations in the constraints of $A$.

\begin{proposition}\label{inequ-shift}
Assume {\rm (HT$_{A}$)}.\\
Take any $k\in Y\setminus\{0\}$ with $g(k)\geq 0$ for all $g\in \Gamma$.
\begin{itemize}
\item[\rm (a)] For $\tilde{A}:=\{y\in Y\colon g(y)\leq b_g + \epsilon g(k) \mbox{ for all } g\in \Gamma \}$
with $\epsilon\in\mathbb{R}$, we get 
$$\varphi_{\tilde{A},k}(y)=\varphi_{A,k}(y) -\epsilon \mbox{ for all } y\in Y.$$
\item[\rm (b)] For $\tilde{A}:=\{y\in Y\colon g(y)\leq b_g + g(y^0) \mbox{ for all } g\in \Gamma \}$ with $y^0\in Y$, we have 
$$\varphi_{\tilde{A},k}(y)=\varphi_{A,k}(y-y^0) \mbox{ for all } y\in Y.$$
\end{itemize}
\end{proposition}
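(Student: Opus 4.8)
The plan is to recognize in each case that the perturbed set $\tilde A$ is merely a translate of the original set $A$, after which both formulas follow immediately from the definition $\varphi_{A,k}(y)=\inf\{t\in\mathbb{R}\colon y\in A+tk\}$ together with an elementary reindexing of the infimum. The one genuine ingredient is the linearity of every $g\in\Gamma$, which lets me absorb the perturbation appearing on the right-hand side of each constraint into a shift of the argument.

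For part (a), I would first establish $\tilde A=A+\epsilon k$. Because each $g$ is linear, the defining inequality $g(y)\le b_g+\epsilon g(k)$ is equivalent to $g(y-\epsilon k)\le b_g$; intersecting over all $g\in\Gamma$ gives $y\in\tilde A$ if and only if $y-\epsilon k\in A$, that is, $\tilde A=A+\epsilon k$. Then for any $y\in Y$,
\[
\varphi_{\tilde A,k}(y)=\inf\{t\in\mathbb{R}\colon y\in A+(t+\epsilon)k\},
\]
and substituting $s=t+\epsilon$ (a bijection of $\mathbb{R}$ onto itself) reindexes the infimum and yields $\varphi_{\tilde A,k}(y)=\varphi_{A,k}(y)-\epsilon$. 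Alternatively, once $\tilde A=A+\epsilon k$ is known, the $k$-translativity of $\varphi_{A,k}$ recorded in Theorem~\ref{t-alle} produces the same shift directly.

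For part (b) the computation is identical but with a general translation vector. Linearity of $g$ makes $g(y)\le b_g+g(y^0)$ equivalent to $g(y-y^0)\le b_g$, so $\tilde A=A+y^0$. Consequently, for every $y\in Y$,
\[
\varphi_{\tilde A,k}(y)=\inf\{t\in\mathbb{R}\colon y-y^0\in A+tk\}=\varphi_{A,k}(y-y^0),
\]
which is the assertion.

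I expect no real obstacle here: the only point demanding care is to use genuine linearity of each $g\in\Gamma$ (not merely monotonicity) when rewriting $\tilde A$ as a translate of $A$. I would also remark that both identities hold for arbitrary $y\in Y$ without invoking $k\in-0^+\tilde A$, since the Gerstewitz functional is defined by an infimum for any set and any nonzero direction; the hypothesis $g(k)\ge 0$ for all $g\in\Gamma$ serves only to ensure that $A$ and $\tilde A$ retain the structure described in {\rm (HT$_{A}$)}.
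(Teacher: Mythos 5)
Your proposal is correct and follows essentially the same route as the paper: identify $\tilde A$ as the translate $A+\epsilon k$ (resp.\ $A+y^0$) via the linearity of each $g\in\Gamma$, and then invoke the elementary translation behaviour of the Gerstewitz functional, $\varphi_{A+\epsilon k,k}(y)=\varphi_{A,k}(y)-\epsilon$ and $\varphi_{A+y^0,k}(y)=\varphi_{A,k}(y-y^0)$, which the paper states as an immediate consequence of the definition and which you justify by reindexing the infimum. No gaps.
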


\begin{proof}
\begin{itemize}
\item[]
\item[\rm (a)] The definition of $\varphi_{A,k}$ implies $\varphi_{A+\epsilon k,k}(y)=\varphi_{A,k}(y) -\epsilon$ for all $y\in Y$.
\begin{eqnarray*}
A+\epsilon k & = & \{y+\epsilon k\in Y\colon g(y)\leq b_g \mbox{ for all } g\in \Gamma\}\\
 & = & \{y\in Y\colon g(y- \epsilon k)\leq b_g \mbox{ for all } g\in \Gamma\} \\
 & = & \{y\in Y\colon g(y)- \epsilon g(k)\leq b_g \mbox{ for all } g\in \Gamma\} \\
 & = & \{y\in Y\colon g(y)\leq b_g+ \epsilon g(k) \mbox{ for all } g\in \Gamma\} 
\end{eqnarray*} 
\item[\rm (b)] The definition of $\varphi_{A,k}$ yields $\varphi_{A+y^0,k}(y)=\varphi_{A,k}(y-y^0)$ for all $y\in Y$.
\begin{eqnarray*}
A+y^0 & = & \{y+y^0\in Y\colon g(y)\leq b_g \mbox{ for all } g\in \Gamma\}\\
 & = & \{y\in Y\colon g(y- y^0)\leq b_g \mbox{ for all } g\in \Gamma\} \\
 & = & \{y\in Y\colon g(y)\leq b_g+ g(y^0)  \mbox{ for all } g\in \Gamma\} 
\end{eqnarray*} 
\end{itemize}
\end{proof}

Let us now consider polyhedral sets $A$ in the Euclidean space which are given by the assumption

\vspace{0.2cm}
\begin{tabular}{ll}
(H$_{A}$): & $\quad A=\{y\in\mathbb{R}^{\ell }\colon Wy\leq b\}$ is an unbounded proper subset of $Y=\mathbb{R}^{\ell }$\\
& \quad with  $b\in\mathbb{R}^{r}$ and $W\in\mathbb{R}^{r,\ell }$, where $W=(w^1 \cdots w^r)^T$ with\\
& \quad $w^i\in \mathbb{R}^{\ell }\setminus\{ 0\}$ for all $i\in\{1,\ldots ,r\}$.
\end{tabular}

\vspace{0.2cm}
Consider any set $A$ which fulfills (H$_{A}$). It is easy to check that 
$$0^+A=\{y\in\mathbb{R}^{\ell }\colon Wy\leq 0\}.$$ 
Since $A$ is an unbounded closed convex set, $0^+A\not=\{ 0\}.$ $k\in -0^+A$ is equivalent to $Wk\geq 0$.

Proposition \ref{Fact4-HT} implies:

\begin{proposition}\label{Fact4-poly}
Assume {\rm (H$_{A}$)}. Take any $k\in \mathbb{R}^{\ell }\setminus\{0\}$ with $Wk\geq 0$ and $Wk\not= 0$.
\begin{itemize}
\item[\rm (a)] $\varphi _{A,k}$ is convex, proper and lower semicontinuous on $\operatorname*{dom}\varphi _{A,k}$.
\item[\rm (b)] $\varphi _{A,k}$ is locally Lipschitz and, hence, continuous on $\operatorname*{int}\operatorname*{dom}\varphi_{A,k}$.
\item[\rm (c)] $\varphi _{A,k}$ is subadditive if and only if $b\leq 0$.
\item[\rm (d)] $\varphi _{A,k}$ is sublinear if and only if $b=0$.
\end{itemize}
\end{proposition}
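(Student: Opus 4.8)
The plan is to derive the statement directly from Proposition~\ref{Fact4-HT} by exhibiting (H$_A$) as a special case of (HT$_A$). First I would set up the dictionary between the matrix description and the functional description: for each row $w^i$ of $W$ define the linear functional $g_i\in (\mathbb{R}^\ell)^*$ by $g_i(y):=(w^i)^Ty$ and put $b_{g_i}:=b_i$. Because $w^i\neq 0$, each $g_i$ is a nonzero element of $(\mathbb{R}^\ell)^*$, and every linear functional on the finite-dimensional space $\mathbb{R}^\ell$ is continuous, so $\Gamma:=\{g_1,\ldots,g_r\}\subset Y^*\setminus\{0\}$ is admissible. With this choice the constraint system $Wy\leq b$ reads $g(y)\leq b_g$ for all $g\in\Gamma$, so the set $A$ of (H$_A$) is exactly the set of (HT$_A$). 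Since (H$_A$) assumes $A$ to be a proper subset of $Y=\mathbb{R}^\ell$ and, as noted after (H$_A$), $0^+A\neq\{0\}$ (because $A$ is unbounded, closed and convex), all requirements of (HT$_A$) are met.

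Next I would translate the hypothesis on $k$. The condition $Wk\geq 0$ says precisely that $g(k)\geq 0$ for every $g\in\Gamma$, and $Wk\neq 0$ says that $g(k)>0$ for at least one $g\in\Gamma$; this is exactly the hypothesis imposed on $k$ in Proposition~\ref{Fact4-HT}. Hence that proposition applies, and I would simply read off the four assertions. Part (a) is Proposition~\ref{Fact4-HT}(a) verbatim. For (c) I would use Proposition~\ref{Fact4-HT}(d): subadditivity is equivalent to $b_g\leq 0$ for all $g\in\Gamma$, which under the dictionary means $b_i\leq 0$ for every $i$, i.e.\ $b\leq 0$. Likewise (d) follows from Proposition~\ref{Fact4-HT}(e), since $b_g=0$ for all $g\in\Gamma$ is the same as $b=0$.

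For (b) the point is that $Y=\mathbb{R}^\ell$, equipped with the Euclidean norm, is a Banach space, so Proposition~\ref{Fact4-HT}(c) yields at once that $\varphi_{A,k}$ is locally Lipschitz on $\operatorname*{int}\operatorname*{dom}\varphi_{A,k}$; continuity on that set then follows because a locally Lipschitz function is continuous. Invoking part (c) rather than part (b) of Proposition~\ref{Fact4-HT} lets me avoid checking $\operatorname*{int}A\neq\emptyset$ separately. Since the whole argument is a specialization, there is no genuine analytic obstacle; the only things to get right are the componentwise reading of the vector inequalities $b\leq 0$ and $b=0$ and the observation that the Banach-space clause already supplies both local Lipschitz continuity and ordinary continuity. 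I would therefore state these identifications explicitly so that each clause of Proposition~\ref{Fact4-poly} is matched to the corresponding clause of Proposition~\ref{Fact4-HT}.
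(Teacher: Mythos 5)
Your proposal is correct and matches the paper's own (essentially unwritten) proof: the paper simply states that Proposition \ref{Fact4-poly} follows from Proposition \ref{Fact4-HT}, and your dictionary $g_i(y):=(w^i)^Ty$, $b_{g_i}:=b_i$ together with the translation $Wk\geq 0$, $Wk\neq 0$ of the hypothesis on $k$ is exactly the intended specialization. Your choice to invoke part (c) of Proposition \ref{Fact4-HT} (since $\mathbb{R}^\ell$ is a Banach space) to get both the local Lipschitz property and continuity in (b) is also consistent with the wording ``locally Lipschitz and, hence, continuous'' in the statement.
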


Proposition \ref{p-phi-inequ} yields the description of the functional $\varphi _{A,k}$.

\begin{proposition}\label{p-phi-poly}
Assume {\rm (H$_{A}$)}. Take any $k\in \mathbb{R}^{\ell }\setminus\{0\}$ with $Wk\geq 0$.
\begin{itemize}
\item[\rm (a)] If $\,Wk=0$, we have $\operatorname*{dom}\varphi _{A,k}=A$ and $\varphi _{A,k}(y)=-\infty$ for each $y\in A$.
\item[\rm (b)] If $\,Wk\not= 0$ and $Wk\not> 0$, we get
$$\operatorname*{dom}\varphi _{A,k}=\{y\in\mathbb{R}^{\ell }\colon (w^i)^Ty\leq b_i\mbox{ for all } i\in I^{{\rm act}}\} \mbox{ and}$$
$$\varphi _{A,k}(y)=\max_{i\notin I^{{\rm act}}}\frac{(w^i)^Ty- b_i}{(w^i)^Tk} \mbox{ for all } y\in\operatorname*{dom}\varphi _{A,k},$$
where $I^{{\rm act}}:=\{i\in\{1,\ldots ,r\} \colon (w^i)^Tk=0\}$.
\item[\rm (c)] In the case $Wk>0$, the function $\varphi _{A,k}$ is finite-valued and
$$\varphi _{A,k}(y)=\max_{i=1,\ldots ,r}\frac{(w^i)^Ty- b_i}{(w^i)^Tk}\quad\mbox{ for all }y\in \mathbb{R}^{\ell }.$$
\end{itemize}
\end{proposition}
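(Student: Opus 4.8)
The plan is to reduce everything to Proposition~\ref{p-phi-inequ} by recognizing the polyhedral description as a special instance of {\rm (HT}$_A${\rm )}. First I would set $\Gamma:=\{g_1,\dots,g_r\}$, where each $g_i\colon\mathbb{R}^\ell\to\mathbb{R}$ is the continuous linear functional $g_i(y):=(w^i)^Ty$ and $b_{g_i}:=b_i$. Since $w^i\neq 0$, each $g_i$ lies in $Y^{*}\setminus\{0\}$, and by construction $A=\{y\in\mathbb{R}^\ell\colon g(y)\leq b_g\text{ for all }g\in\Gamma\}$. As already noted in the text, {\rm (H}$_A${\rm )} forces $A$ to be an unbounded, closed, convex, proper subset of $\mathbb{R}^\ell$, so $0^+A\neq\{0\}$; hence {\rm (HT}$_A${\rm )} holds for this finite $\Gamma$. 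Moreover $Wk\geq 0$ is precisely the requirement $g(k)\geq 0$ for all $g\in\Gamma$, so the standing hypothesis of Proposition~\ref{p-phi-inequ} is met. Finally I would record the dictionary $\Gamma^{\rm act}=\{g\in\Gamma\colon g(k)=0\}\leftrightarrow I^{\rm act}=\{i\colon (w^i)^Tk=0\}$.

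With this translation fixed, cases (a) and (c) are immediate. For (a), $Wk=0$ says $g(k)=0$ for every $g\in\Gamma$, which is exactly the hypothesis of Proposition~\ref{p-phi-inequ}(a), giving $\operatorname*{dom}\varphi_{A,k}=A$ and $\varphi_{A,k}\equiv-\infty$ on $A$. For (c), $Wk>0$ says $g(k)>0$ for every $g\in\Gamma$, and since $\Gamma$ is finite this is the hypothesis of Proposition~\ref{p-phi-inequ}(c); the finite-valuedness and the formula $\max_{i=1,\dots,r}\frac{(w^i)^Ty-b_i}{(w^i)^Tk}$ transcribe verbatim.

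Case (b) is the only one needing a short extra remark, and is the main (minor) obstacle. The assumption $Wk\neq 0$ together with $Wk\geq 0$ guarantees some component $(w^{i_0})^Tk>0$, so there is a $g_{i_0}\in\Gamma$ with $g_{i_0}(k)>0$; this is the hypothesis of Proposition~\ref{p-phi-inequ}(b), which returns the domain $\{y\colon g(y)\leq b_g\text{ for all }g\in\Gamma^{\rm act}\text{ and }\sup_{g\notin\Gamma^{\rm act}}\frac{g(y)-b_g}{g(k)}\in\mathbb{R}\}$ and the value $\varphi_{A,k}(y)=\sup_{g\notin\Gamma^{\rm act}}\frac{g(y)-b_g}{g(k)}$. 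The key simplification particular to the polyhedral setting is that $\Gamma\setminus\Gamma^{\rm act}$ is a finite set, and it is nonempty precisely because $Wk\neq 0$; hence for every $y$ the supremum is attained as a finite maximum. Consequently the side condition $\sup\in\mathbb{R}$ is automatically satisfied and disappears from the domain, leaving $\operatorname*{dom}\varphi_{A,k}=\{y\colon(w^i)^Ty\leq b_i\text{ for all }i\in I^{\rm act}\}$, while the supremum becomes the stated maximum $\max_{i\notin I^{\rm act}}\frac{(w^i)^Ty-b_i}{(w^i)^Tk}$. This finiteness observation is the only genuinely new input; no further estimates are required, and the reduction is complete.
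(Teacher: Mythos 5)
Your proposal is correct and follows exactly the route the paper takes: the paper derives Proposition~\ref{p-phi-poly} by simply invoking Proposition~\ref{p-phi-inequ} with $\Gamma=\{g_1,\dots,g_r\}$, $g_i(y)=(w^i)^Ty$, after having already noted that (H$_A$) implies (HT$_A$) and that $Wk\geq 0$ corresponds to $k\in-0^+A$. Your explicit remark in case (b) that the finite, nonempty set $\Gamma\setminus\Gamma^{\rm act}$ turns the supremum into an attained real maximum (so the side condition $\sup\in\mathbb{R}$ drops out of the domain) is exactly the point the paper leaves implicit, and it is handled correctly.
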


If the set $A$ in Proposition \ref{p-phi-poly} is a polyhedral cone, i.e., if $b=0$, then the effective domain  
$\operatorname*{dom}\varphi _{A,k}$ in part (b) is just the Clarke tangent cone of $A$ at $k$ by \cite[p.138]{HirLem93}.

Choose now the identity matrix as matrix $W$.

\begin{corollary}\label{c-phi-id}
Assume $A=b-\mathbb{R}^{\ell }_+$ with $b\in\mathbb{R}^{\ell }$ and $k\in \mathbb{R}^{\ell }_+\setminus\{0\}$.\\
If $k\not> 0$, we get 
$$\operatorname*{dom}\varphi _{A,k}=\{y\in\mathbb{R}^{\ell }\colon y_i\leq b_i\mbox{ for all } i\in I^{{\rm act}}\} \mbox{ and}$$
$$\varphi _{A,k}(y)=\max_{i\notin I^{{\rm act}}}\frac{y_i- b_i}{k_i} \mbox{ for each } y\in\operatorname*{dom}\varphi _{A,k},$$
where $I^{{\rm act}}:=\{i\in\{1,\ldots ,\ell \} \colon k_i=0\}$.\\
In the case $k>0$, the function $\varphi _{A,k}$ is finite-valued and
\begin{equation}\label{f-phi-id}
\varphi _{A,k}(y)=\max_{i=1,\ldots ,\ell }\frac{y_i- b_i}{k_i}\quad\mbox{ for all }y\in \mathbb{R}^{\ell }.
\end{equation}
\end{corollary}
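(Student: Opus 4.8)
The plan is to obtain the statement as a direct specialization of Proposition \ref{p-phi-poly} to the case where $W$ is the $\ell\times\ell$ identity matrix.

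First I would check that assumption {\rm (H$_{A}$)} holds with $W=I$ and the given $b$. Writing $A=b-\mathbb{R}^{\ell}_+=\{y\in\mathbb{R}^{\ell}\colon y\leq b\}=\{y\in\mathbb{R}^{\ell}\colon Iy\leq b\}$ exhibits $A$ in the required form; its rows are the standard unit vectors $w^i=e_i\in\mathbb{R}^{\ell}\setminus\{0\}$, and $A$ is a shifted orthant, hence an unbounded, closed, convex, proper subset of $\mathbb{R}^{\ell}$. Thus {\rm (H$_{A}$)} is satisfied.

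Next I would rewrite every quantity in Proposition \ref{p-phi-poly} in the present notation. Since $w^i=e_i$, we have $(w^i)^Ty=y_i$ and $(w^i)^Tk=k_i$ for all $y,k\in\mathbb{R}^{\ell}$, so $Wk\geq 0$ reduces to $k\geq 0$ and $Wk=0$ to $k=0$. In particular $k\in\mathbb{R}^{\ell}_+\setminus\{0\}$ is exactly the admissible range of $k$, and $I^{{\rm act}}=\{i\colon (w^i)^Tk=0\}$ becomes $\{i\colon k_i=0\}$, as in the statement.

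Finally I would distinguish the two cases. If $k\not>0$, then $k\neq 0$ gives $Wk=k\neq 0$ and $Wk\not>0$, so Proposition \ref{p-phi-poly}(b) applies and, after the substitutions above, yields precisely the asserted expressions for $\operatorname*{dom}\varphi_{A,k}$ and $\varphi_{A,k}$. If $k>0$, then $Wk=k>0$, so Proposition \ref{p-phi-poly}(c) gives that $\varphi_{A,k}$ is finite-valued together with formula (\ref{f-phi-id}). No step poses a real obstacle here; the only point requiring attention is the verification of {\rm (H$_{A}$)}, which is immediate for a translated orthant.
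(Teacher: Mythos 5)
Your proposal is correct and coincides with the paper's own route: the corollary is stated there precisely as the specialization of Proposition \ref{p-phi-poly} obtained by "choosing the identity matrix as matrix $W$", and your verification of {\rm (H$_{A}$)} together with the substitutions $(w^i)^Ty=y_i$, $(w^i)^Tk=k_i$ is exactly what is needed. Nothing is missing.
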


Applying Proposition \ref{p-phi-poly} for $r=1$, we get the following statement for half\-spaces $A$.
\begin{corollary}\label{c-phi-weight}
Assume $A=\{y\in\mathbb{R}^{\ell }\colon w^Ty\leq b\}$ with $w\in\mathbb{R}^{\ell }\setminus\{ 0\}$, $b\in\mathbb{R}$.\\
For $\,k\in \mathbb{R}^{\ell }\setminus\{0\}$ with $w^Tk=0$, we have $\operatorname*{dom}\varphi _{A,k}=A$ and
$$\varphi _{A,k}(y)=-\infty \mbox{ for all } y\in A.$$
For $\,k\in \mathbb{R}^{\ell }\setminus\{0\}$ with $\,w^Tk > 0$, the function $\varphi _{A,k}$ is finite-valued and
\begin{equation}\label{f-phi-weight}
\varphi _{A,k}(y)=\frac{w^Ty- b}{w^Tk}\quad\mbox{ for all }y\in \mathbb{R}^{\ell }.
\end{equation}
\end{corollary}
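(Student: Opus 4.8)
The plan is to derive Corollary~\ref{c-phi-weight} directly as the specialization of Proposition~\ref{p-phi-poly} to the case $r=1$. Here $A=\{y\in\mathbb{R}^{\ell}\colon w^Ty\leq b\}$ is a single halfspace, so the constraint matrix $W$ degenerates to the single row vector $w^T$, and the right-hand side $b$ is a single scalar. I first need to check that the hypothesis {\rm (H}$_A${\rm )} is met: since $w\in\mathbb{R}^{\ell}\setminus\{0\}$, a halfspace is a proper, unbounded, closed, convex subset of $\mathbb{R}^{\ell}$, so {\rm (H}$_A${\rm )} applies with $W=w^T$ and $r=1$. The condition $Wk\geq 0$ becomes simply $w^Tk\geq 0$.

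**Splitting into the two cases.**
With $r=1$ there are only two admissible subcases from Proposition~\ref{p-phi-poly}, namely $Wk=0$ and $Wk>0$; the mixed case ``$Wk\neq 0$ and $Wk\not> 0$'' in part~(b) is vacuous when there is only one inequality. First I would treat $w^Tk=0$: this is exactly case~(a) of Proposition~\ref{p-phi-poly}, which gives $\operatorname*{dom}\varphi_{A,k}=A$ and $\varphi_{A,k}(y)=-\infty$ for every $y\in A$, matching the first assertion verbatim. Next, for $w^Tk>0$ I would invoke case~(c) of Proposition~\ref{p-phi-poly}: with $r=1$ the finite set $\{1,\ldots,r\}$ is the singleton $\{1\}$, so the maximum over $i=1,\ldots,r$ collapses to the single term, yielding that $\varphi_{A,k}$ is finite-valued with
\[
\varphi_{A,k}(y)=\frac{w^Ty-b}{w^Tk}\quad\text{for all }y\in\mathbb{R}^{\ell},
\]
which is formula~(\ref{f-phi-weight}).

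**Expected obstacle.**
There is essentially no analytic difficulty here; the whole content is bookkeeping in translating the general polyhedral notation $(W,b,w^i,b_i,I^{\mathrm{act}})$ into the single-inequality notation $(w,b)$. The one point that deserves a sentence of care is confirming that the intermediate case~(b) of Proposition~\ref{p-phi-poly} truly cannot occur, so that the two listed cases are exhaustive: when $r=1$, the only way to have $Wk\geq 0$ yet $Wk\not> 0$ is $Wk=0$, and $w^Tk=0$ together with $w^Tk\geq 0$ and $w^Tk>0$ partition all possibilities. I would state this explicitly so the reader sees that no case is omitted, then simply quote the two relevant parts of Proposition~\ref{p-phi-poly} with $W=w^T$ substituted throughout.
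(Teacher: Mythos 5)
Your proposal is correct and follows exactly the paper's own route: the paper derives this corollary by applying Proposition \ref{p-phi-poly} with $r=1$, where case (a) gives the $w^Tk=0$ statement and case (c) gives formula (\ref{f-phi-weight}), with case (b) vacuous for a single inequality. Your additional verification that {\rm (H}$_A${\rm )} holds for a halfspace and that the case split is exhaustive is sound bookkeeping that the paper leaves implicit.
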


We get some sensitivity results from Proposition \ref{inequ-shift}:

\begin{proposition}\label{poly-shift}
Assume {\rm (H$_{A}$)}. Take any $k\in \mathbb{R}^{\ell }\setminus\{0\}$ with $Wk\geq 0$.
\begin{itemize}
\item[\rm (a)] For $\tilde{A}:=\{y\in\mathbb{R}^{\ell }\colon Wy\leq b+\epsilon Wk\}$ with $\epsilon\in\mathbb{R}$, we get 
$$\varphi_{\tilde{A},k}(y)=\varphi_{A,k}(y) -\epsilon \mbox{ for all } y\in\mathbb{R}^{\ell }.$$
\item[\rm (b)] For $\tilde{A}:=\{y\in\mathbb{R}^{\ell }\colon Wy\leq b+ Wy^0\}$ with $y^0\in\mathbb{R}^{\ell }$, we have 
$$\varphi_{\tilde{A},k}(y)=\varphi_{A,k}(y-y^0)\mbox{ for all }y\in\mathbb{R}^{\ell }.$$
\end{itemize}
\end{proposition}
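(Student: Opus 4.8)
The plan is to recognize Proposition \ref{poly-shift} as the specialization of Proposition \ref{inequ-shift} to the polyhedral setting, so that no new computation is required beyond matching the hypotheses. First I would check that every set $A$ satisfying (H$_A$) also satisfies (HT$_A$) for a suitable choice of $\Gamma$. To this end, set $\Gamma := \{g_1,\ldots,g_r\}$, where $g_i\colon \mathbb{R}^{\ell}\to\mathbb{R}$ is the linear functional $g_i(y):=(w^i)^Ty$. Since $\mathbb{R}^{\ell}$ carries its Euclidean topology, each $g_i$ is continuous, and $g_i\neq 0$ because $w^i\neq 0$; hence $\Gamma\subset Y^{*}\setminus\{0\}$. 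With $b_{g_i}:=b_i$ the representation $A=\{y\colon (w^i)^Ty\leq b_i \text{ for all } i\}$ coincides with $A=\{y\colon g(y)\leq b_g \text{ for all } g\in\Gamma\}$. The properness of $A$ is part of (H$_A$), and the paragraph preceding the proposition already records that $0^+A=\{y\colon Wy\leq 0\}\neq\{0\}$ because $A$ is unbounded, closed and convex. Thus (HT$_A$) holds.

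Next I would translate the remaining data. The hypothesis $Wk\geq 0$ is exactly $g_i(k)=(w^i)^Tk\geq 0$ for all $i$, i.e. $g(k)\geq 0$ for all $g\in\Gamma$, which is the condition required on $k$ in Proposition \ref{inequ-shift}. For part (a), the set $\tilde{A}=\{y\colon Wy\leq b+\epsilon Wk\}$ reads coordinatewise as $(w^i)^Ty\leq b_i+\epsilon(w^i)^Tk$, i.e. $g(y)\leq b_g+\epsilon g(k)$ for all $g\in\Gamma$; this is precisely the perturbed set appearing in Proposition \ref{inequ-shift}(a). For part (b), the set $\tilde{A}=\{y\colon Wy\leq b+Wy^0\}$ reads as $(w^i)^Ty\leq b_i+(w^i)^Ty^0$, i.e. $g(y)\leq b_g+g(y^0)$ for all $g\in\Gamma$, matching the perturbed set in Proposition \ref{inequ-shift}(b).

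Having matched all hypotheses and both perturbed sets, I would simply invoke Proposition \ref{inequ-shift}: part (a) yields $\varphi_{\tilde{A},k}(y)=\varphi_{A,k}(y)-\epsilon$ and part (b) yields $\varphi_{\tilde{A},k}(y)=\varphi_{A,k}(y-y^0)$, for all $y\in\mathbb{R}^{\ell}$, which are exactly the asserted identities. There is essentially no obstacle here; the only point deserving a moment of care is the verification that (H$_A$) implies (HT$_A$)---in particular that the finitely many functionals $y\mapsto(w^i)^Ty$ are genuine elements of $Y^{*}\setminus\{0\}$ and that $0^+A\neq\{0\}$---but both are immediate from the standing discussion of polyhedral sets in the Euclidean space.
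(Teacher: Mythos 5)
Your proposal is correct and matches the paper's own derivation: the paper introduces Proposition \ref{poly-shift} with the single line ``We get some sensitivity results from Proposition \ref{inequ-shift}'' and relies on exactly the identification $\Gamma=\{y\mapsto (w^i)^Ty\}$, $b_{g_i}=b_i$ that you spell out, with the verification that (H$_A$) implies (HT$_A$) already recorded in the paragraph preceding Proposition \ref{Fact4-poly}. Your write-up is simply a more explicit version of the same specialization argument.
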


\begin{corollary}
Assume that $A=\{y\in\mathbb{R}^{\ell }\colon Wy\leq b\}$ holds for some  $b\in\mathbb{R}^{\ell }$ and 
some regular matrix $W\in\mathbb{R}^{\ell ,\ell }$.
Take any $k\in \mathbb{R}^{\ell }\setminus\{0\}$ with $Wk\geq 0$.
For 
$$\tilde{A}:=\{y\in\mathbb{R}^{\ell }\colon Wy\leq b+ s\}\mbox{ with }s\in\mathbb{R}^{\ell },$$ 
we have $\varphi_{\tilde{A},k}(y)=\varphi_{A,k}(y-W^{-1}s)$ for all $y\in\mathbb{R}^{\ell }$.
\end{corollary}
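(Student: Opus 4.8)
The plan is to reduce the corollary directly to Proposition \ref{poly-shift}(b); the only content beyond that proposition is the replacement of a perturbation of the special form $Wy^0$ by an arbitrary vector $s\in\mathbb{R}^{\ell}$, and this replacement is possible exactly because $W$ is regular.

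First I would check that the hypothesis {\rm (H$_A$)} holds in the present setting, so that Proposition \ref{poly-shift} is applicable. Since $W$ is regular, every row $w^i$ is nonzero, and $A=\{y\in\mathbb{R}^{\ell}\colon Wy\leq b\}$ is the image of the unbounded proper set $b-\mathbb{R}^{\ell}_+$ under the linear isomorphism $W^{-1}$; hence $A$ is an unbounded proper subset of $\mathbb{R}^{\ell}$. In particular, $0^+A=\{y\colon Wy\leq 0\}=-W^{-1}(\mathbb{R}^{\ell}_+)\neq\{0\}$, so that {\rm (H$_A$)} is indeed satisfied.

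Next I would set $y^0:=W^{-1}s$, which is well defined because $W$ is invertible, and observe that then $Wy^0=s$, so that
$$b+s=b+Wy^0.$$
Consequently $\tilde A=\{y\in\mathbb{R}^{\ell}\colon Wy\leq b+Wy^0\}$ is precisely the set treated in Proposition \ref{poly-shift}(b). Applying that proposition with this $y^0$ yields
$$\varphi_{\tilde A,k}(y)=\varphi_{A,k}(y-y^0)=\varphi_{A,k}(y-W^{-1}s)\quad\mbox{for all }y\in\mathbb{R}^{\ell},$$
which is the assertion. I do not expect any substantive obstacle: the whole argument is a single change of variables, and the only point requiring attention is that the regularity of $W$ simultaneously guarantees {\rm (H$_A$)} and makes $y^0=W^{-1}s$ the unique solution of the equation $Wy^0=s$.
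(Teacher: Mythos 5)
Your proof is correct and follows exactly the route the paper intends: the corollary is stated as an immediate consequence of Proposition \ref{poly-shift}(b) via the substitution $y^0:=W^{-1}s$, and your additional verification that the regularity of $W$ guarantees (H$_A$) (nonzero rows, $A$ unbounded and proper, $0^+A\neq\{0\}$) is a sound and welcome completeness check.
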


\begin{remark}
Tammer and Winkler \cite{TamWin03} studied $\varphi _{A,k}$ for polyhedral sets $A$ the construction of which ones depends on the unit ball of some block norm. These sets $A$ fulfill the property $\operatorname*{bd}A-\mathbb{R}_+^{\ell }\subseteq A$. For this special case, Tammer and Winkler proved 
the formula from Proposition \ref{p-phi-poly}(c) for $k\in\operatorname*{int}\mathbb{R}_+^{\ell }$ 
and that $\varphi _{A,k}$ is finite-valued, continuous, and convex. 
The formula from Proposition \ref{p-phi-poly}(c) was also given in \cite[Ex.3.27]{KKTZ2015} for the special case that $A$ is a polyhedral cone with $W\geq 0$. The formula (\ref{f-phi-id}) was shown for $A=b-\mathbb{R}_+^{\ell }$ and $k>0$ in \cite[p.14]{CheHuaYan05}. The special case of the formula (\ref{f-phi-weight}) under the additional assumptions $b=0$ and $w>0$ can be found in \cite[Ex.5]{KoeTam15}.
\end{remark}

\section{Extension of functions to translative functions}\label{s-arbitfunc}

In this section, each extended real-valued function will be shown to be the restriction of some translative functional to a hyperspace. 

\begin{proposition}
Assume 
\begin{equation}\label{hyp-epi}
\tag{{\rm H$_{\rm epi}$}}
f\colon Y\to\overline{\mathbb{R}}, \; A:=\operatorname*{epi}f, \; k:=(0 _{Y},-1)\in Y\times \mathbb{R}.
\end{equation}
Then $\operatorname*{dom}\varphi  _{A,k}=\operatorname*{dom}f \times \mathbb{R}$ and
\begin{equation}\label{eq-vgl}
\varphi _{A,k}((y,s))=f(y)-s \mbox{ for all }(y,s)\in Y\times \mathbb{R}.
\end{equation}
\end{proposition}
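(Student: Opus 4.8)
The plan is to unwind the definition of the Gerstewitz functional directly, observing that here the ambient vector space is $Z:=Y\times\mathbb{R}$, the set $A=\operatorname*{epi}f$ lives in $Z$, and $k=(0_Y,-1)\in Z\setminus\{0\}$. For a fixed point $(y,s)\in Y\times\mathbb{R}$ and $t\in\mathbb{R}$ I would first rewrite the membership condition: since $(y,s)-tk=(y,s+t)$, we have $(y,s)\in A+tk$ exactly when $(y,s+t)\in\operatorname*{epi}f$, i.e.\ when $f(y)\le s+t$, i.e.\ when $t\ge f(y)-s$. Substituting into the definition gives $\varphi_{A,k}((y,s))=\inf\{t\in\mathbb{R}\colon t\ge f(y)-s\}$, and the whole proof reduces to evaluating this infimum.

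Next I would split into three cases according to the value of $f(y)$. If $f(y)\in\mathbb{R}$, then $f(y)-s\in\mathbb{R}$ and the infimum of $\{t\in\mathbb{R}\colon t\ge f(y)-s\}$ is the real number $f(y)-s$. If $f(y)=-\infty$, the constraint $t\ge f(y)-s=-\infty$ holds for every real $t$, so the infimum is taken over all of $\mathbb{R}$ and equals $-\infty=f(y)-s$. If $f(y)=+\infty$, the constraint $t\ge+\infty$ is never met by a real $t$, so the set is empty and, by the convention $\inf\emptyset=+\infty$ recalled in the introduction, the infimum equals $+\infty=f(y)-s$. In all three cases $\varphi_{A,k}((y,s))=f(y)-s$ with the usual extended arithmetic, which is formula (\ref{eq-vgl}).

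Finally I would read off the effective domain. By definition $(y,s)\in\operatorname*{dom}\varphi_{A,k}$ iff $\varphi_{A,k}((y,s))=f(y)-s\in\mathbb{R}\cup\{-\infty\}$, and by the case analysis this happens precisely when $f(y)\ne+\infty$, that is, when $y\in\operatorname*{dom}f$; note that this condition does not involve $s$. Hence $\operatorname*{dom}\varphi_{A,k}=\operatorname*{dom}f\times\mathbb{R}$, as claimed.

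The computation is elementary; the only point requiring care is the bookkeeping in the two infinite cases, where one must correctly invoke the convention $\inf\emptyset=+\infty$ and the extended-real arithmetic for $f(y)-s$, so that the single formula (\ref{eq-vgl}) covers all points uniformly. I expect this to be the main (and rather mild) obstacle, rather than any structural difficulty.
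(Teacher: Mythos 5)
Your proposal is correct and follows essentially the same route as the paper: unwind the definition, rewrite $(y,s)\in A+tk$ as $t\ge f(y)-s$, and evaluate the infimum. The paper's proof is the same five-line computation; your extra case analysis for $f(y)=\pm\infty$ and the explicit derivation of $\operatorname*{dom}\varphi_{A,k}$ merely spell out details the paper leaves implicit.
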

\begin{proof}
Take any $(y,s)\in Y\times \mathbb{R}.$
\begin{eqnarray*}
\varphi _{A,k}((y,s)) & = & \operatorname*{inf}\{t\in \mathbb{R}\colon (y,s)\in A +t(0 _{Y},-1)\}\\
& = & \operatorname*{inf}\{t\in \mathbb{R}\colon (y,s+t)\in A\} \\
& = & \operatorname*{inf}\{t\in \mathbb{R}\colon f(y)\leq s+t\} \\
& = & \operatorname*{inf}\{t\in \mathbb{R}\colon f(y)-s\leq t\} \\
& = & f(y)-s 
\end{eqnarray*}
\end{proof}

This implies:

\begin{theorem}
Each extended real-valued function on a linear space is the restriction of some Gerstewitz functional to a hyperspace.\\
In detail, assuming \eqref{hyp-epi}, we get $\operatorname*{dom}f=\{ y\in Y\colon (y,0)\in\operatorname*{dom}\varphi  _{A,k}\}$ and $f(y)=\varphi _{A,k}((y,0))$ for each $y\in Y$.
\end{theorem}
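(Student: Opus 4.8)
The plan is to derive everything directly from the proposition that immediately precedes this theorem. That proposition already establishes, under the standing hypothesis \eqref{hyp-epi}, that $\operatorname*{dom}\varphi_{A,k}=\operatorname*{dom}f\times\mathbb{R}$ and that $\varphi_{A,k}((y,s))=f(y)-s$ for all $(y,s)\in Y\times\mathbb{R}$. Since $A=\operatorname*{epi}f\subseteq Y\times\mathbb{R}$ and $k=(0_Y,-1)\neq 0$, the Gerstewitz functional $\varphi_{A,k}$ is a genuine translative functional on the space $Y\times\mathbb{R}$, which has the hyperspace $Y\times\{0\}$ as a natural candidate for the domain of restriction.

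First I would observe that $Y\times\{0\}$ is an (affine) hyperspace of $Y\times\mathbb{R}$, or rather that the map $y\mapsto(y,0)$ embeds $Y$ onto a linear subspace of codimension one, so that the phrase ``restriction to a hyperspace'' is literally realized by evaluating $\varphi_{A,k}$ along the slice $s=0$. Then I would simply specialize the two formulas from the preceding proposition to $s=0$. For the domain statement, the identity $\operatorname*{dom}\varphi_{A,k}=\operatorname*{dom}f\times\mathbb{R}$ gives, for the slice $s=0$, that $(y,0)\in\operatorname*{dom}\varphi_{A,k}$ if and only if $y\in\operatorname*{dom}f$; this is exactly the claimed characterization $\operatorname*{dom}f=\{y\in Y\colon(y,0)\in\operatorname*{dom}\varphi_{A,k}\}$. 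For the value statement, setting $s=0$ in $\varphi_{A,k}((y,s))=f(y)-s$ yields $\varphi_{A,k}((y,0))=f(y)$ for each $y\in Y$, which is the desired equality $f(y)=\varphi_{A,k}((y,0))$.

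The ``each extended real-valued function is the restriction of some Gerstewitz functional'' clause is then just the universal reading of this construction: the hypothesis \eqref{hyp-epi} associates to an \emph{arbitrary} $f\colon Y\to\overline{\mathbb{R}}$ the concrete data $A=\operatorname*{epi}f$ and $k=(0_Y,-1)$, and the two specialized identities show that $f$ is recovered as the composition of $\varphi_{A,k}$ with the embedding $y\mapsto(y,0)$. I would phrase the proof as a direct appeal to the preceding proposition, noting that the theorem is precisely the $s=0$ instance of that result together with the trivial observation that every such $f$ admits the data in \eqref{hyp-epi}.

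There is essentially no obstacle here, since the substantive content was already discharged in the proposition: the nontrivial computation was the chain of equalities showing $\varphi_{A,k}((y,s))=f(y)-s$, and that computation is complete. The only point that deserves a word of care is purely terminological, namely confirming that $Y\times\{0\}$ qualifies as a hyperspace (a linear subspace of codimension one in $Y\times\mathbb{R}$) so that the statement's geometric language is justified; this is immediate. Thus the entire proof reduces to writing ``This follows by setting $s=0$ in the preceding proposition,'' and I would keep it that brief.
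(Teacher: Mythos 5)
Your proposal is correct and matches the paper exactly: the paper gives no separate proof, introducing the theorem with ``This implies:'' after the proposition establishing $\operatorname*{dom}\varphi_{A,k}=\operatorname*{dom}f\times\mathbb{R}$ and $\varphi_{A,k}((y,s))=f(y)-s$, so the intended argument is precisely your specialization to $s=0$.
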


Let us now study interdependencies between the properties of $f$ and $\varphi _{A,k}.$

\begin{lemma}\label{l-epiclsd}
Assume \eqref{hyp-epi}. Then:
\begin{itemize}
\item[\rm (a)] $k\in -0^+A\setminus\{0\}.$
\item[\rm (b)] $A$ is $k$-directionally closed.
\item[\rm (c)] $A$ is a proper subset of $Y\times \mathbb{R}$ if and only if $f$ is not a constant function with the function value $+\infty$ or $-\infty$.
\end{itemize}
\end{lemma}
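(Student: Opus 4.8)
The plan is to handle the three parts in order, exploiting that $A=\operatorname*{epi}f$ is built directly from $f$ so that translation by $k=(0_Y,-1)$ merely shifts the real coordinate. Throughout I write a generic point of $Y\times\mathbb{R}$ as $(y,s)$.

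For (a), I first note that $k\neq 0$ since its second coordinate is $-1$. To obtain $k\in-0^+A$, equivalently $-k=(0_Y,1)\in 0^+A$, I would take an arbitrary $(y,s)\in A$, so that $f(y)\leq s$, and observe that for every $\lambda\in\mathbb{R}_+$ one has $f(y)\leq s\leq s+\lambda$; hence $(y,s)+\lambda(0_Y,1)=(y,s+\lambda)\in A$. This shows $A+\mathbb{R}_+(-k)\subseteq A$, i.e.\ $-k\in 0^+A$, and therefore $k\in-0^+A\setminus\{0\}$. Note this argument is unaffected by the extreme cases $A=\emptyset$ or $A=Y\times\mathbb{R}$, for which the recession cone is all of $Y\times\mathbb{R}$.

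For (b), having established $k\in-0^+A$ in part (a), I would invoke Proposition~\ref{kcl_recc}, which gives $\operatorname*{cl}_k(A)=\{(y,s)\in Y\times\mathbb{R}\colon (y,s)-\mathbb{R}_>k\subseteq A\}$. Since $(y,s)-tk=(y,s+t)$, membership of $(y,s)$ in $\operatorname*{cl}_k(A)$ amounts to $f(y)\leq s+t$ for all $t\in\mathbb{R}_>$, which is equivalent to $f(y)\leq s$, i.e.\ to $(y,s)\in A$. Thus $\operatorname*{cl}_k(A)=A$ and $A$ is $k$-directionally closed. (Alternatively one may argue straight from Lemma~\ref{l-k-dir-clsd}: a sequence $t_n\searrow 0$ with $(y,s+t_n)=(y,s)-t_nk\in A$ forces $f(y)\leq s+t_n$ for every $n$, hence $f(y)\leq s$ in the limit.) The only place demanding a little care is the extended-real-valued case, since $f(y)$ may equal $\pm\infty$; but the reduction ``$f(y)\leq s+t$ for all $t>0$ iff $f(y)\leq s$'' remains valid verbatim when $f(y)\in\{-\infty,+\infty\}$, so this is the sole, and minor, obstacle.

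For (c), I would read ``proper subset'' as $\emptyset\neq A\neq Y\times\mathbb{R}$ and simply identify the two extreme cases. One has $A=\operatorname*{epi}f=\emptyset$ exactly when no real $t$ satisfies $f(y)\leq t$ for any $y$, that is, when $f(y)=+\infty$ for all $y\in Y$; and $A=Y\times\mathbb{R}$ exactly when $f(y)\leq t$ for all $t\in\mathbb{R}$ and all $y$, that is, when $f(y)=-\infty$ for all $y\in Y$ (here I use that $Y\neq\emptyset$, as $0\in Y$). Consequently $A$ fails to be a proper subset precisely when $f$ is the constant function with value $+\infty$ or the constant function with value $-\infty$, which is exactly the asserted equivalence.
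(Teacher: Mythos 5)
Your proposal is correct and follows essentially the same route as the paper: part (a) is the same recession-cone computation, part (c) the same identification of the two degenerate cases, and for part (b) the paper argues exactly as in your parenthetical alternative via Lemma~\ref{l-k-dir-clsd} (your primary route through Proposition~\ref{kcl_recc} is a legitimate minor variant that works because (a) supplies the hypothesis $k\in-0^+A$).
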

\begin{proof}
\begin{itemize}
\item[]
\item[\rm (a)] Take any $(y,t)\in A$, $\lambda \in \mathbb{R}_+$.\\
$\Rightarrow f(y)\leq t\leq t+\lambda.$
$\Rightarrow (y,t)-\lambda k=(y,t+\lambda )\in A.$
$\Rightarrow k\in -0^+A\setminus\{0\}.$
\item[\rm (b)] Take any $(y,t)\in Y\times \mathbb{R}$ for which there exists some sequence $(t_n)_{n\in\mathbb{N}}$ of real numbers with $t_n\searrow 0$ and $(y,t)-t_n k\in A$ for all $n\in\mathbb{N}.$
$(y,t+t_n)=(y,t)-t_n k \in A.$
$\Rightarrow f(y)\leq t+t_n$ for all $n\in\mathbb{N}.$ $\Rightarrow f(y)\leq t.$ $\Rightarrow (y,t)\in A.$
Hence, $A$ is $k$-directionally closed.
\item[\rm (c)] $A=\emptyset$ if and only if $\operatorname*{dom}f=\emptyset$.\\
$A=Y\times \mathbb{R}$ if and only if $f(y)= -\infty$ for each $y\in Y$.
\end{itemize}
\end{proof}

Proposition \ref{p-cx_ua} and Definition \ref{d-cx_ua} yield:

\begin{proposition}
Assume \eqref{hyp-epi} and that $f$ is not a constant function  with the function value $+\infty$ or $-\infty$. Then:
\begin{itemize}
\item[\rm (a)] $f$ is convex if and only if $\varphi _{A,k}$ is convex.
\item[\rm (b)] $f$ is positively homogeneous if and only if $\varphi _{A,k}$ is positively homogeneous.
\item[\rm (c)] $f$ is subadditive if and only if $\varphi _{A,k}$ is subadditive.
\item[\rm (d)] $f$ is sublinear if and only if $\varphi _{A,k}$ is sublinear.
\end{itemize}
\end{proposition}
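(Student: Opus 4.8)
The plan is to combine the characterization from Proposition \ref{p-cx_ua} with the equality from equation \eqref{eq-vgl}, using the fact that the four properties in Definition \ref{d-cx_ua} are phrased purely in terms of the epigraph. First I would invoke Lemma \ref{l-epiclsd}: since $f$ is not the constant function $+\infty$ or $-\infty$, part (c) gives that $A=\operatorname*{epi}f$ is a proper subset of $Y\times\mathbb{R}$, while parts (a) and (b) give $k\in -0^+A\setminus\{0\}$ and that $A$ is $k$-directionally closed. Thus the hypotheses of Proposition \ref{p-cx_ua} are satisfied with the ambient space $Y\times\mathbb{R}$ in place of $Y$, so each of the four equivalences there is available.

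The core observation is that each property is defined through the epigraph, so it suffices to relate $\operatorname*{epi}f$ to the set-theoretic conditions on $A$ that Proposition \ref{p-cx_ua} supplies. For (a), $\varphi_{A,k}$ is convex iff $A$ is convex by Proposition \ref{p-cx_ua}(a); but $A=\operatorname*{epi}f$, and by Definition \ref{d-cx_ua}(a) $f$ is convex iff $\operatorname*{epi}f$ is convex, so the two are identical conditions. For (b), Proposition \ref{p-cx_ua}(b) gives $\varphi_{A,k}$ positively homogeneous iff $A=\operatorname*{epi}f$ is a cone, and Definition \ref{d-cx_ua}(b) defines $f$ positively homogeneous as exactly $\operatorname*{epi}f$ being a nonempty cone; one only needs to note $\operatorname*{epi}f\neq\emptyset$, which holds because $f$ is not constantly $+\infty$. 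Parts (c) and (d) go the same way: $A+A\subseteq A$ means $\operatorname*{epi}f+\operatorname*{epi}f\subseteq\operatorname*{epi}f$, which is the definition of $f$ subadditive, and $A$ a convex cone matches $f$ sublinear.

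I expect no serious obstacle here — the statement is essentially a transport of Proposition \ref{p-cx_ua} along the identification $A=\operatorname*{epi}f$, made legitimate precisely because Definition \ref{d-cx_ua} encodes all four properties epigraphically. The only points requiring care are bookkeeping ones: checking that the nonemptiness/properness clauses in the cone-type definitions (the ``nonempty cone'' in (b) and (d)) are met, which follows from the excluded constant cases via Lemma \ref{l-epiclsd}(c) and the observation that $\operatorname*{epi}f$ is nonempty whenever $f\not\equiv+\infty$. I would therefore present the proof as a short four-line argument: cite Lemma \ref{l-epiclsd} to enable Proposition \ref{p-cx_ua}, then read off each equivalence by substituting $A=\operatorname*{epi}f$ into the set conditions and recognizing the result as the corresponding clause of Definition \ref{d-cx_ua}.
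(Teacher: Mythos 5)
Your proposal is correct and follows essentially the same route as the paper, which simply derives the result from Proposition \ref{p-cx_ua} and Definition \ref{d-cx_ua} after Lemma \ref{l-epiclsd} has supplied the hypotheses (properness, $k$-directional closedness, $k\in -0^+A\setminus\{0\}$). Your additional bookkeeping on nonemptiness of $\operatorname*{epi}f$ is a reasonable elaboration of what the paper leaves implicit.
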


Especially in vector optimization, monotonicity of functionals turns out to be essential for their usability in scalarizing problems.

\begin{definition}\label{d-mon}
Assume $F, B\subseteq Y$.
$\varphi: Y \to \overline{\mathbb{R}}$ is said to be
\begin{itemize}
\item[\rm (a)]
$B$-monotone on $F$
if $y^1,y^2 \in F$ and $y^{2}-y^{1}\in B$ imply $\varphi (y^{1})\le \varphi (y^{2})$,
\item[\rm (b)] strictly $B$-monotone on $F$ 
if $y^1,y^2 \in F$ and $y^{2}-y^{1}\in B\setminus
\{0\}$ imply \linebreak
$\varphi (y^{1})<\varphi (y^{2})$.
\end{itemize}
\end{definition}

\begin{proposition}
Assume \eqref{hyp-epi} and $F, B\subseteq Y$ with $F\not=\emptyset$. Then:
\begin{itemize}
\item[\rm (1)] $f$ is $B$-monotone on $F$ if and only if $\varphi _{A,k}$ is $(B\times (-\mathbb{R}_+))$-monotone on $F\times \mathbb{R}$.
\item[\rm (2)] $\varphi _{A,k}$ is strictly $(B\times (-\mathbb{R}_+))$-monotone on $F\times \mathbb{R}$ if and only if
\begin{itemize}
\item[\rm (a)] $f$ is strictly $B$-monotone on $F$, and
\item[\rm (b)] $f$ is finite-valued on $F$ or $0_Y\notin B$.
\end{itemize}
\end{itemize}
\end{proposition}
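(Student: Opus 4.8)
The entire argument rests on the explicit formula \eqref{eq-vgl}, namely $\varphi_{A,k}((y,s))=f(y)-s$, which converts every statement about $\varphi_{A,k}$ on $F\times\mathbb{R}$ into a statement about $f$ on $F$. First I would decode the order relations: for $(y^1,s^1),(y^2,s^2)\in F\times\mathbb{R}$, the membership $(y^2,s^2)-(y^1,s^1)\in B\times(-\mathbb{R}_+)$ is equivalent to $y^2-y^1\in B$ together with $s^2\leq s^1$, and by \eqref{eq-vgl} the inequality $\varphi_{A,k}((y^1,s^1))\leq\varphi_{A,k}((y^2,s^2))$ reads $f(y^1)-s^1\leq f(y^2)-s^2$ (and likewise with ``$<$'' in the strict version). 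With this dictionary in hand, both parts become short.

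For part (1), the ``if'' direction adds the two non-strict inequalities $f(y^1)\leq f(y^2)$ (from $B$-monotonicity of $f$) and $-s^1\leq -s^2$ (from $s^2\leq s^1$); since the $s_i$ are real, this addition is unambiguous and yields $f(y^1)-s^1\leq f(y^2)-s^2$. For the converse I would simply specialize to $s^1=s^2=0$: then $(y^2-y^1,0)\in B\times(-\mathbb{R}_+)$ whenever $y^2-y^1\in B$, and monotonicity of $\varphi_{A,k}$ collapses to $f(y^1)\leq f(y^2)$.

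Part (2) is the substantive one. For the ``if'' direction I would split on whether $y^1=y^2$. When $y^2-y^1\in B\setminus\{0\}$, strict $B$-monotonicity gives $f(y^1)<f(y^2)$ (so in particular $f(y^1)<+\infty$), and chaining $f(y^1)-s^1\leq f(y^1)-s^2<f(y^2)-s^2$ produces the strict inequality regardless of finiteness. When $y^1=y^2=:y$, the requirement that the increment be nonzero forces $s^2<s^1$; here $0_Y=y^2-y^1\in B$, so hypothesis (b) supplies finiteness of $f(y)$, whence $f(y)-s^1<f(y)-s^2$ follows. For the converse, specializing $s^1=s^2=0$ gives (a) exactly as in part (1); to obtain (b), assuming $0_Y\in B$, I would fix any $y\in F$ and take $s^2<s^1$, so that $(0_Y,s^2-s^1)\in(B\times(-\mathbb{R}_+))\setminus\{0\}$ and strict monotonicity demands $f(y)-s^1<f(y)-s^2$ --- an inequality that is violated (both sides equal $+\infty$ or both $-\infty$) unless $f(y)$ is real; hence $f$ is finite-valued on $F$.

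The only genuine obstacle is the extended-real bookkeeping, and it is concentrated in the diagonal case $y^1=y^2$ of part (2): a strict increase along the pure $s$-direction can be manufactured only when $f(y)$ is finite, and this is precisely the mechanism that makes condition (b) both necessary and sufficient. Everywhere else the infinities are harmless because the $s_i$ are finite, so subtracting them preserves both ``$\leq$'' and ``$<$''; the one point to watch is the sign flip $s^2\leq s^1\Leftrightarrow -s^1\leq -s^2$ when transporting the order from the $(-\mathbb{R}_+)$-component to the functional values.
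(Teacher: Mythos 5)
Your proposal is correct and follows essentially the same route as the paper: everything is reduced via the formula $\varphi_{A,k}((y,s))=f(y)-s$, the converse directions are obtained by specializing to $s^1=s^2=0$, and the backward implication in part (2) is handled by the same case split on $y^1=y^2$ versus $y^1\neq y^2$, with condition (b) doing its work precisely in the diagonal case. Your extended-real bookkeeping (in particular the observation that $f(y^1)<f(y^2)$ already forces $f(y^1)<+\infty$, so the off-diagonal strict inequality survives without finiteness) matches the paper's argument.
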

\noindent {\it Proof.}
\begin{itemize}
\item[\rm (1)] 
Assume first that $f$ is $B$-monotone on $F$. Take any $(y^1,s_1),(y^2,s_2)\in F\times \mathbb{R}$ with 
$(y^2,s_2)-(y^1,s_1)\in B\times (-\mathbb{R}_+)$. Then $f(y^1)\leq f(y^2)$ since $f$ is $B$-monotone on $F$, and $s_1\geq s_2$. Hence, $\varphi _{A,k}((y^1,s_1))\leq \varphi _{A,k}((y^2,s_2))$ by (\ref{eq-vgl}). Thus, $\varphi _{A,k}$ is $(B\times (-\mathbb{R}_+))$-monotone on $F\times \mathbb{R}$.\\
Assume now that $\varphi _{A,k}$ is $(B\times (-\mathbb{R}_+))$-monotone on $F\times \mathbb{R}$. Take any $y^1, y^2\in F$ with $y^2-y^1\in B$. Then $\varphi _{A,k}((y^1,0))\leq \varphi _{A,k}((y^2,0))$ by the monotonicity. Hence, $f(y^1)\leq f(y^2)$ by (\ref{eq-vgl}). Thus, $f$ is $B$-monotone on $F$.
\item[\rm (2)] 
\begin{itemize}
\item[\rm (i)] 
Assume first that $\varphi _{A,k}$ is strictly $(B\times (-\mathbb{R}_+))$-monotone on $F\times \mathbb{R}$.
Take any $y^1, y^2\in F$ with $y^2-y^1\in B\setminus\{ 0\}$. Then $\varphi _{A,k}((y^1,0))< \varphi _{A,k}((y^2,0))$ by the assumed monotonicity. Hence, $f(y^1)< f(y^2)$ by (\ref{eq-vgl}). Thus, $f$ is strictly $B$-monotone on $F$.\\
Suppose $0\in B$. Take any $y\in F$. $(y,0)-(y,1)\in(B\times (-\mathbb{R}_+))\setminus\{ (0_Y,0)\}$ implies 
$\varphi _{A,k}(y,1)< \varphi _{A,k}(y,0)$. By (\ref{eq-vgl}), $f(y)-1<f(y)-0$. Hence, $f(y)\in\mathbb{R}$. Thus 
$\varphi _{A,k}$ is finite-valued on $F$.
\item[\rm (ii)] Assume now that {\rm (a)} and {\rm (b)} hold. Take any $(y^1,s_1),(y^2,s_2)\in F\times \mathbb{R}$ with 
$(y^2,s_2)-(y^1,s_1)\in (B\times (-\mathbb{R}_+))\setminus\{ (0_Y,0)\}$. 
If $y^1=y^2$, then $s_1> s_2$. Otherwise, $f(y^1) < f(y^2)$ since $f$ is strictly $B$-monotone on $F$, and $s_1 \geq s_2$. If $f$ is finite-valued on $F$, we get $f(y^1)-s_1< f(y^2)-s_2$. If $0_Y\notin B$, then $y^1\not= y^2$,
and we get $f(y^1)-s_1< f(y^2)-s_2$. This implies $\varphi _{A,k}((y^1,s_1))< \varphi _{A,k}((y^2,s_2))$ by (\ref{eq-vgl})
Thus, $\varphi _{A,k}$ is strictly $(B\times (-\mathbb{R}_+))$-monotone on $F\times \mathbb{R}$.
\hfill $\square$
\end{itemize}
\end{itemize}

\begin{proposition}\label{p-contepi}
Assume \eqref{hyp-epi} and that $Y$ is a topological vector space. 
\begin{itemize}
\item[\rm (a)] $f$ is lower semicontinuous if and only if $\varphi _{A,k}$ is lower semicontinuous.
\item[\rm (b)] $f$ is continuous if and only if $\varphi _{A,k}$ is continuous.
\end{itemize}
\end{proposition}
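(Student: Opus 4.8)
The plan is to use the explicit formula \eqref{eq-vgl}, $\varphi_{A,k}((y,s))=f(y)-s$, which holds for every $(y,s)\in Y\times\mathbb{R}$ and shows that $\varphi_{A,k}$ depends on the extra coordinate $s$ only through the continuous affine term $-s$. Hence both semicontinuity and continuity should transfer between $f$ and $\varphi_{A,k}$ along the inclusion $y\mapsto(y,0)$ and the projection $(y,s)\mapsto y$. Note that the sum $f(y)-s$ is always unambiguously defined (no $(+\infty)+(-\infty)$ can occur, since $-s$ is real), so \eqref{eq-vgl} may be used without case distinctions, including when $f\equiv+\infty$ or $f\equiv-\infty$.

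For part (a) I would argue through the epigraph characterization of lower semicontinuity in Lemma \ref{l-semicon}, applied once to $f$ on $Y$ and once to $\varphi_{A,k}$ on the topological space $Y\times\mathbb{R}$. Using \eqref{eq-vgl},
$$\operatorname*{epi}\varphi_{A,k}=\{((y,s),t)\in(Y\times\mathbb{R})\times\mathbb{R}\colon f(y)\leq s+t\}.$$
The linear map $\Phi((y,s),t):=((y,s+t),t)$ is a homeomorphism of $(Y\times\mathbb{R})\times\mathbb{R}$ onto itself (with continuous inverse $((y,u),t)\mapsto((y,u-t),t)$), and it carries $\operatorname*{epi}\varphi_{A,k}$ onto $A\times\mathbb{R}=\operatorname*{epi}f\times\mathbb{R}$. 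Therefore $\operatorname*{epi}\varphi_{A,k}$ is closed if and only if $A\times\mathbb{R}$ is closed, i.e. if and only if $A=\operatorname*{epi}f$ is closed. By Lemma \ref{l-semicon}, the left-hand closedness is equivalent to lower semicontinuity of $\varphi_{A,k}$ and the right-hand closedness to lower semicontinuity of $f$, which yields (a). (One may equally well run this through the sublevel sets $\operatorname*{sublev}_{\varphi_{A,k}}(t)=\{(y,s)\colon(y,s+t)\in A\}$, each a translate-preimage of $A$ and hence closed exactly when $A$ is.)

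For part (b) I would use that continuity of an $\overline{\mathbb{R}}$-valued function means continuity for the order topology on $\overline{\mathbb{R}}$, equivalently lower together with upper semicontinuity; the lower semicontinuous part is already settled by (a), and upper semicontinuity transfers by the symmetric hypograph argument. Alternatively, and more directly, I would write $\varphi_{A,k}$ as a composition: the inclusion $j\colon Y\to Y\times\mathbb{R}$, $j(y)=(y,0)$, is continuous and satisfies $f=\varphi_{A,k}\circ j$ by \eqref{eq-vgl}, so continuity of $\varphi_{A,k}$ forces continuity of $f$; conversely, if $f$ is continuous then $(y,s)\mapsto(f(y),-s)$ is a continuous map $Y\times\mathbb{R}\to\overline{\mathbb{R}}\times\mathbb{R}$, and $\varphi_{A,k}$ is its composition with addition $\overline{\mathbb{R}}\times\mathbb{R}\to\overline{\mathbb{R}}$. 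The one point requiring care — the main obstacle — is the verification that this addition is continuous: on all of $\overline{\mathbb{R}}\times\overline{\mathbb{R}}$ it fails only at $(+\infty,-\infty)$ and $(-\infty,+\infty)$, and both are excluded here because the second argument $-s$ ranges over the finite reals, so addition restricted to $\overline{\mathbb{R}}\times\mathbb{R}$ is continuous and the composition argument goes through.
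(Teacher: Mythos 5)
Your proof is correct, and for part (a) it takes a different route from the paper's. The paper proves (a) via the sublevel-set criterion of Lemma \ref{l-semicon}: it invokes Lemma \ref{l-epiclsd} (that $A=\operatorname*{epi}f$ is $k$-directionally closed with $k\in-0^+A\setminus\{0\}$) together with Theorem \ref{t-sublevequ} to get $\operatorname*{sublev}_{\varphi_{A,k}}(t)=A+tk$, so that all sublevel sets are closed exactly when $A$ is; this is precisely the alternative you sketch in your parenthetical remark. Your main argument instead uses the epigraph criterion and the shear homeomorphism $\Phi((y,s),t)=((y,s+t),t)$ carrying $\operatorname*{epi}\varphi_{A,k}$ onto $\operatorname*{epi}f\times\mathbb{R}$, which is more self-contained (it bypasses Lemma \ref{l-epiclsd} and Theorem \ref{t-sublevequ} entirely) at the cost of not reusing the machinery the paper has already built; both computations are really the same translation-invariance observation seen one level up. For part (b) the paper only says the claim ``results from \eqref{eq-vgl} by the properties of continuous functions,'' and your composition argument --- $f=\varphi_{A,k}\circ j$ with $j(y)=(y,0)$ for one direction, and $\varphi_{A,k}$ as addition composed with $(y,s)\mapsto(f(y),-s)$ for the other --- is exactly the intended content, with the welcome extra care that addition $\overline{\mathbb{R}}\times\mathbb{R}\to\overline{\mathbb{R}}$ is continuous because the problematic points $(\pm\infty,\mp\infty)$ cannot occur when the second argument is real.
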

\begin{proof}
\begin{itemize}
\item[] 
\item[\rm (a)] 
Apply Lemma \ref{l-semicon}. $f$ is lower semicontinuous if and only if $A$ is closed. $\varphi _{A,k}$ is lower semicontinuous if and only if $\operatorname*{sublev}_{\varphi _{A,k} }(t)$ is closed for each $t\in\mathbb{R}$. 
By Lemma \ref{l-epiclsd}, $A$ is $k$-directionally closed and $k\in -0^+A\setminus\{ 0\}$. This implies
$\operatorname*{sublev}_{\varphi _{A,k} }(t)=A+tk$ for each $t\in\mathbb{R}$ because of Theorem \ref{t-sublevequ}. Hence, $A$ is closed if and only if $\operatorname*{sublev}_{\varphi _{A,k} }(t)$ is closed for each $t\in\mathbb{R}$.
\item[\rm (b)] results from (\ref{eq-vgl}) by the properties of continuous functions.
\end{itemize}
\end{proof}

This implies a characterization of any continuous function by its epigraph.

\begin{proposition}\label{p-epicont}
Assume that $Y$ is a topological vector space and $f\colon Y\to\overline{\mathbb{R}}$.\\
$f$ is continuous if and only if $\operatorname*{epi}f$ is closed and
\begin{equation}\label{eq-epicont}
\operatorname*{epi}f+\mathbb{R}_{>}\cdot (0 _{Y},1)\subseteq \operatorname*{int}(\operatorname*{epi}f).
\end{equation}
\end{proposition}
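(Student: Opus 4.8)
The plan is to deduce this directly from Proposition~\ref{p-contepi}(b) by applying the assumption \eqref{hyp-epi} to the given function $f$ and translating the continuity of $\varphi_{A,k}$ into a condition on $A=\operatorname*{epi}f$ via Theorem~\ref{t-semicon-alle}(2). First I would set $A:=\operatorname*{epi}f$ and $k:=(0_Y,-1)$, so that \eqref{hyp-epi} holds. By Proposition~\ref{p-contepi}(b), $f$ is continuous if and only if $\varphi_{A,k}$ is continuous. By Lemma~\ref{l-epiclsd}, we have $k\in -0^+A\setminus\{0\}$ and $A$ is $k$-directionally closed, so the hypotheses of Theorem~\ref{t-semicon-alle} are met, and it remains to decide when $A$ is such that $\varphi_{A,k}$ is continuous.

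The key translation is this: Theorem~\ref{t-semicon-alle}(2) says $\varphi_{A,k}$ is continuous if and only if $\operatorname*{cl}A-\mathbb{R}_{>}k\subseteq \operatorname*{int}A$. With $k=(0_Y,-1)$ we have $-\mathbb{R}_{>}k=\mathbb{R}_{>}\cdot(0_Y,1)$, so this condition reads $\operatorname*{cl}(\operatorname*{epi}f)+\mathbb{R}_{>}\cdot(0_Y,1)\subseteq \operatorname*{int}(\operatorname*{epi}f)$. Thus I would argue that $f$ is continuous if and only if this inclusion holds. The remaining task is to show that this inclusion is equivalent to the conjunction of the two conditions in the statement, namely that $\operatorname*{epi}f$ is closed together with $\operatorname*{epi}f+\mathbb{R}_{>}\cdot(0_Y,1)\subseteq\operatorname*{int}(\operatorname*{epi}f)$.

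For the forward direction, if $f$ is continuous then in particular $f$ is lower semicontinuous, so by Lemma~\ref{l-semicon} (or Proposition~\ref{p-contepi}(a)) $\operatorname*{epi}f$ is closed; hence $\operatorname*{cl}(\operatorname*{epi}f)=\operatorname*{epi}f$ and the continuity inclusion $\operatorname*{cl}(\operatorname*{epi}f)+\mathbb{R}_{>}\cdot(0_Y,1)\subseteq\operatorname*{int}(\operatorname*{epi}f)$ becomes exactly \eqref{eq-epicont}. Conversely, if $\operatorname*{epi}f$ is closed and \eqref{eq-epicont} holds, then again $\operatorname*{cl}(\operatorname*{epi}f)=\operatorname*{epi}f$, so \eqref{eq-epicont} is precisely the condition $\operatorname*{cl}A-\mathbb{R}_{>}k\subseteq\operatorname*{int}A$, and Theorem~\ref{t-semicon-alle}(2) together with Proposition~\ref{p-contepi}(b) gives continuity of $f$.

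The main subtlety to handle carefully is the degenerate case excluded by Lemma~\ref{l-epiclsd}(c): if $f\equiv+\infty$ then $A=\emptyset$, and if $f\equiv-\infty$ then $A=Y\times\mathbb{R}$, so $A$ is not a proper subset and Theorem~\ref{t-semicon-alle} (which was stated for the proper case via Lemma~\ref{l-cont}) does not apply verbatim. I would dispose of these two cases separately by direct inspection: a constant function is continuous, $\operatorname*{epi}f$ is closed (being $\emptyset$ or the whole space), and \eqref{eq-epicont} holds trivially in each case, so the equivalence remains valid. I expect this boundary bookkeeping, rather than the core equivalence, to be the only real obstacle; the substance of the argument is simply combining Proposition~\ref{p-contepi}(b), Lemma~\ref{l-epiclsd}, and Theorem~\ref{t-semicon-alle}(2) after the sign translation $-\mathbb{R}_{>}k=\mathbb{R}_{>}\cdot(0_Y,1)$.
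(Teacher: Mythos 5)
Your proposal is correct and follows essentially the same route as the paper: set $A:=\operatorname*{epi}f$, $k:=(0_Y,-1)$, combine Proposition~\ref{p-contepi}(b), Lemma~\ref{l-epiclsd} and Theorem~\ref{t-semicon-alle}(2), translate $-\mathbb{R}_{>}k$ into $\mathbb{R}_{>}\cdot(0_Y,1)$, and use lower semicontinuity plus Lemma~\ref{l-semicon} to replace $\operatorname*{cl}(\operatorname*{epi}f)$ by $\operatorname*{epi}f$ in the forward direction. Your extra treatment of the cases $f\equiv+\infty$ and $f\equiv-\infty$ is harmless but not needed, since the paper states Lemma~\ref{l-cont} (and hence Theorem~\ref{t-semicon-alle}) for arbitrary $A\subseteq Y$, remarking that the non-proper case is obvious.
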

\begin{proof}
Define $A:=\operatorname*{epi}f$ and $k:=(0 _{Y},-1)\in Y\times \mathbb{R}$.
By Proposition \ref{p-contepi}, $f$ is continuous if and only if $\varphi _{A,k}$ is continuous. $k\in -0^+A\setminus\{ 0\}$ holds because of Lemma \ref{l-epiclsd}. We get from Theorem \ref{t-semicon-alle} that $\varphi _{A,k}$ is continuous if and only if 
$\operatorname*{cl}A-\mathbb{R}_{>}\cdot k\subseteq \operatorname*{int}A.$  
Hence, $f$ is continuous if and only if 
\begin{equation}\label{eq2}
\operatorname*{cl}A+\mathbb{R}_{>}\cdot (0 _{Y},1)\subseteq \operatorname*{int}A.
\end{equation}
Assume first that $f$ is continuous. Then it is lower semicontinuous. By Lemma \ref{l-semicon}, $A$ is closed, and we get $A+\mathbb{R}_{>}\cdot (0 _{Y},1)=\operatorname*{cl}A+\mathbb{R}_{>}\cdot (0 _{Y},1)\subseteq \operatorname*{int}A.$\\
Assume now that $\operatorname*{epi}f$ is closed and that \eqref{eq-epicont} holds. Then
\eqref{eq2} is fulfilled, and $f$ is continuous.
\end{proof}

\begin{corollary}
Assume that $Y$ is a topological vector space.
Each continuous, positively homogeneous function $f\colon Y\to\overline{\mathbb{R}}$ is finite-valued or the constant function with the function value $-\infty$.
\end{corollary}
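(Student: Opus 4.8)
The plan is to read off from Definition \ref{d-cx_ua}(b) that positive homogeneity of $f$ means $A:=\operatorname*{epi}f$ is a nonempty cone, to translate this into pointwise information about $f$, and then to exploit continuity along rays through the origin. First I would note that, since $A$ is a nonempty cone, $0\cdot (y,t)=(0_Y,0)\in A$, so $f(0_Y)\le 0$; in particular $f(0_Y)<+\infty$. Next I would extract the homogeneity relation from the cone property of the epigraph: from $(y,t)\in A\Rightarrow (\lambda y,\lambda t)\in A$ for all $\lambda\in\mathbb{R}_+$ one checks, distinguishing the cases $f(y)\in\mathbb{R}$, $f(y)=+\infty$ and $f(y)=-\infty$, that $f(\lambda y)=\lambda f(y)$ for every $\lambda\in\mathbb{R}_>$ (with the conventions $\lambda\cdot(\pm\infty)=\pm\infty$). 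In particular $f(\lambda y)=+\infty$ for all $\lambda>0$ whenever $f(y)=+\infty$, and likewise with $-\infty$ in place of $+\infty$.

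The second ingredient is continuity along rays. For a fixed $y\in Y$ the map $\lambda\mapsto\lambda y$ is continuous from $\mathbb{R}$ into the topological vector space $Y$, so the composition $g_y\colon\mathbb{R}\to\overline{\mathbb{R}}$, $g_y(\lambda):=f(\lambda y)$, is a continuous function on $\mathbb{R}$, and $g_y(0)=f(0_Y)$. Reducing everything to these one-variable maps $g_y$ is convenient because it lets me argue with the order topology of $\overline{\mathbb{R}}$ directly, without assuming $Y$ to be first-countable.

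With these tools I would argue as follows. Suppose $f(y_0)=+\infty$ for some $y_0$. Then $g_{y_0}(\lambda)=+\infty$ for all $\lambda>0$, and continuity of $g_{y_0}$ at $0$ forces $g_{y_0}(0)=f(0_Y)=+\infty$: if $f(0_Y)$ were any value other than $+\infty$, a neighborhood of it avoiding $+\infty$ would pull back to an open neighborhood of $0$ containing some $\lambda>0$ with $g_{y_0}(\lambda)=+\infty$, a contradiction. This contradicts $f(0_Y)\le 0$, so $f$ never attains $+\infty$, i.e.\ $f(Y)\subseteq\mathbb{R}\cup\{-\infty\}$. Now either $f$ is finite-valued, and we are done, or $f(y_1)=-\infty$ for some $y_1$; in the latter case $g_{y_1}(\lambda)=-\infty$ for all $\lambda>0$, so continuity gives $f(0_Y)=g_{y_1}(0)=-\infty$. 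Finally, for an arbitrary $y\in Y$, if $f(y)$ were real then $g_y(\lambda)=\lambda f(y)\to 0$ as $\lambda\searrow 0$, contradicting $g_y(0)=f(0_Y)=-\infty$; since $f(y)\ne+\infty$, this leaves $f(y)=-\infty$. Thus $f$ is the constant function with value $-\infty$.

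The routine part is the case distinction yielding $f(\lambda y)=\lambda f(y)$ on $\mathbb{R}_>$; the step that requires care is the continuity argument in $\overline{\mathbb{R}}$, where one must use the neighborhood structure of the extended reals rather than sequential limits, which is precisely why I pass to the continuous maps $g_y$. I expect no genuine obstacle beyond keeping the infinite values bookkept consistently.
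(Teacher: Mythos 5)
Your proof is correct, but it follows a genuinely different route from the paper's. The paper stays inside the machinery of Section \ref{s-arbitfunc}: it passes to the Gerstewitz functional $\varphi_{A,k}$ with $A=\operatorname*{epi}f$ and $k=(0_Y,-1)$, invokes the epigraph characterization of continuity (Proposition \ref{p-epicont}) to conclude $A-\mathbb{R}_>k\subseteq\operatorname*{int}A$, deduces $k\in-\operatorname*{int}(0^+A)\subseteq-\operatorname*{core}0^+A$, and then applies the finite-valuedness result for Gerstewitz functionals (Proposition \ref{p-finval}) together with the identity $f(y)=\varphi_{A,k}((y,0))$ from \eqref{eq-vgl}; the degenerate case $A=Y\times\mathbb{R}$ gives $f\equiv-\infty$. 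You instead work directly with $f$: you extract the pointwise relation $f(\lambda y)=\lambda f(y)$ for $\lambda>0$ from the cone property of the epigraph, note $f(0_Y)\le 0$ since a nonempty cone contains the origin, and then use continuity of $f$ along rays (via the maps $g_y(\lambda)=f(\lambda y)$ and the order topology on $\overline{\mathbb{R}}$) to rule out the value $+\infty$ everywhere and to show that a single value $-\infty$ propagates to all of $Y$. Your argument is self-contained and elementary — it needs neither the recession-cone/core apparatus nor any result about $\varphi_{A,k}$ — which is a genuine gain in transparency; the paper's proof is shorter and serves its expository purpose of showing the extension-to-translative-functions framework at work. One small point worth making explicit in your write-up is the definition of continuity for $\overline{\mathbb{R}}$-valued maps you are using (neighborhoods in the order topology of $\overline{\mathbb{R}}$), since your ray argument hinges on it; this is the intended notion in the paper, so there is no gap.
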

\begin{proof}
Since $f$ is positively homogeneous, $A:=\operatorname*{epi}f$ is a nonempty cone. By Proposition \ref{p-epicont},
$A-\mathbb{R}_{>}\cdot k\subseteq \operatorname*{int}A$ for $k:=(0_Y ,-1)$. Hence, $k\in -\operatorname*{int}A=-\operatorname*{int}(0^+A)$. If $A=Y$, then $f$ is the constant function with the function value $-\infty$. Otherwise, $\varphi _{A,k}$ is finite-valued because of Proposition \ref{p-finval}.
Thus, $f$ is finite-valued by \eqref{eq-vgl}.
\end{proof}

\def\cfac#1{\ifmmode\setbox7\hbox{$\accent"5E#1$}\else
  \setbox7\hbox{\accent"5E#1}\penalty 10000\relax\fi\raise 1\ht7
  \hbox{\lower1.15ex\hbox to 1\wd7{\hss\accent"13\hss}}\penalty 10000
  \hskip-1\wd7\penalty 10000\box7}
  \def\cfac#1{\ifmmode\setbox7\hbox{$\accent"5E#1$}\else
  \setbox7\hbox{\accent"5E#1}\penalty 10000\relax\fi\raise 1\ht7
  \hbox{\lower1.15ex\hbox to 1\wd7{\hss\accent"13\hss}}\penalty 10000
  \hskip-1\wd7\penalty 10000\box7}
  \def\cfac#1{\ifmmode\setbox7\hbox{$\accent"5E#1$}\else
  \setbox7\hbox{\accent"5E#1}\penalty 10000\relax\fi\raise 1\ht7
  \hbox{\lower1.15ex\hbox to 1\wd7{\hss\accent"13\hss}}\penalty 10000
  \hskip-1\wd7\penalty 10000\box7}
  \def\cfac#1{\ifmmode\setbox7\hbox{$\accent"5E#1$}\else
  \setbox7\hbox{\accent"5E#1}\penalty 10000\relax\fi\raise 1\ht7
  \hbox{\lower1.15ex\hbox to 1\wd7{\hss\accent"13\hss}}\penalty 10000
  \hskip-1\wd7\penalty 10000\box7}
  \def\cfac#1{\ifmmode\setbox7\hbox{$\accent"5E#1$}\else
  \setbox7\hbox{\accent"5E#1}\penalty 10000\relax\fi\raise 1\ht7
  \hbox{\lower1.15ex\hbox to 1\wd7{\hss\accent"13\hss}}\penalty 10000
  \hskip-1\wd7\penalty 10000\box7}
  \def\cfac#1{\ifmmode\setbox7\hbox{$\accent"5E#1$}\else
  \setbox7\hbox{\accent"5E#1}\penalty 10000\relax\fi\raise 1\ht7
  \hbox{\lower1.15ex\hbox to 1\wd7{\hss\accent"13\hss}}\penalty 10000
  \hskip-1\wd7\penalty 10000\box7}
  \def\cfac#1{\ifmmode\setbox7\hbox{$\accent"5E#1$}\else
  \setbox7\hbox{\accent"5E#1}\penalty 10000\relax\fi\raise 1\ht7
  \hbox{\lower1.15ex\hbox to 1\wd7{\hss\accent"13\hss}}\penalty 10000
  \hskip-1\wd7\penalty 10000\box7}
  \def\cfac#1{\ifmmode\setbox7\hbox{$\accent"5E#1$}\else
  \setbox7\hbox{\accent"5E#1}\penalty 10000\relax\fi\raise 1\ht7
  \hbox{\lower1.15ex\hbox to 1\wd7{\hss\accent"13\hss}}\penalty 10000
  \hskip-1\wd7\penalty 10000\box7}
  \def\cfac#1{\ifmmode\setbox7\hbox{$\accent"5E#1$}\else
  \setbox7\hbox{\accent"5E#1}\penalty 10000\relax\fi\raise 1\ht7
  \hbox{\lower1.15ex\hbox to 1\wd7{\hss\accent"13\hss}}\penalty 10000
  \hskip-1\wd7\penalty 10000\box7} \def\Dbar{\leavevmode\lower.6ex\hbox to
  0pt{\hskip-.23ex \accent"16\hss}D}
  \def\cfac#1{\ifmmode\setbox7\hbox{$\accent"5E#1$}\else
  \setbox7\hbox{\accent"5E#1}\penalty 10000\relax\fi\raise 1\ht7
  \hbox{\lower1.15ex\hbox to 1\wd7{\hss\accent"13\hss}}\penalty 10000
  \hskip-1\wd7\penalty 10000\box7} \def\cprime{$'$}
  \def\Dbar{\leavevmode\lower.6ex\hbox to 0pt{\hskip-.23ex \accent"16\hss}D}
  \def\cfac#1{\ifmmode\setbox7\hbox{$\accent"5E#1$}\else
  \setbox7\hbox{\accent"5E#1}\penalty 10000\relax\fi\raise 1\ht7
  \hbox{\lower1.15ex\hbox to 1\wd7{\hss\accent"13\hss}}\penalty 10000
  \hskip-1\wd7\penalty 10000\box7} \def\cprime{$'$}
  \def\Dbar{\leavevmode\lower.6ex\hbox to 0pt{\hskip-.23ex \accent"16\hss}D}
  \def\cfac#1{\ifmmode\setbox7\hbox{$\accent"5E#1$}\else
  \setbox7\hbox{\accent"5E#1}\penalty 10000\relax\fi\raise 1\ht7
  \hbox{\lower1.15ex\hbox to 1\wd7{\hss\accent"13\hss}}\penalty 10000
  \hskip-1\wd7\penalty 10000\box7}
  \def\udot#1{\ifmmode\oalign{$#1$\crcr\hidewidth.\hidewidth
  }\else\oalign{#1\crcr\hidewidth.\hidewidth}\fi}
  \def\cfac#1{\ifmmode\setbox7\hbox{$\accent"5E#1$}\else
  \setbox7\hbox{\accent"5E#1}\penalty 10000\relax\fi\raise 1\ht7
  \hbox{\lower1.15ex\hbox to 1\wd7{\hss\accent"13\hss}}\penalty 10000
  \hskip-1\wd7\penalty 10000\box7} \def\Dbar{\leavevmode\lower.6ex\hbox to
  0pt{\hskip-.23ex \accent"16\hss}D}
  \def\cfac#1{\ifmmode\setbox7\hbox{$\accent"5E#1$}\else
  \setbox7\hbox{\accent"5E#1}\penalty 10000\relax\fi\raise 1\ht7
  \hbox{\lower1.15ex\hbox to 1\wd7{\hss\accent"13\hss}}\penalty 10000
  \hskip-1\wd7\penalty 10000\box7} \def\Dbar{\leavevmode\lower.6ex\hbox to
  0pt{\hskip-.23ex \accent"16\hss}D}
  \def\cfac#1{\ifmmode\setbox7\hbox{$\accent"5E#1$}\else
  \setbox7\hbox{\accent"5E#1}\penalty 10000\relax\fi\raise 1\ht7
  \hbox{\lower1.15ex\hbox to 1\wd7{\hss\accent"13\hss}}\penalty 10000
  \hskip-1\wd7\penalty 10000\box7} \def\Dbar{\leavevmode\lower.6ex\hbox to
  0pt{\hskip-.23ex \accent"16\hss}D}
  \def\cfac#1{\ifmmode\setbox7\hbox{$\accent"5E#1$}\else
  \setbox7\hbox{\accent"5E#1}\penalty 10000\relax\fi\raise 1\ht7
  \hbox{\lower1.15ex\hbox to 1\wd7{\hss\accent"13\hss}}\penalty 10000
  \hskip-1\wd7\penalty 10000\box7}

\end{document}